\title{
Lyapunov exponents for random perturbations of 
coupled standard maps
}
\author{Alex Blumenthal, Jinxin Xue, Yun Yang}
\date{\today}
\newcommand{\Addresses}{{
  \bigskip
  \footnotesize

  Alex Blumenthal, \textsc{School of Mathematics, Georgia Institute of Technology,
  	Atlanta, Georgia, USA 30332}. 
	\textit{Email address}: \texttt{ablumenthal6@math.gatech.edu}
	
	\medskip
  
  Jinxin Xue, \textsc{Department of Mathematics, Yau Mathematical Sciences Center, Jingzhai 310, Tsinghua University, Beijing, China 100084}.
	\textit{Email address}: \texttt{jxue@tsinghua.edu.cn}
  
  \medskip

Yun Yang, \textsc{Department of Mathematics, Virginia Technical Institute,
Blacksburg, Virginia, USA 24061}. 
\textit{Email address}: \texttt{yunyang@vt.edu}

}}
\theoremstyle{theorem}
\newtheorem{thm}{Theorem}
\newtheorem{lem}[thm]{Lemma}
\newtheorem{prop}[thm]{Proposition}
\theoremstyle{definition}
\newtheorem{defn}[thm]{Definition}
\newtheorem{rmk}[thm]{Remark}
\newtheorem{cla}[thm]{Claim}
\newcommand{\E}{\mathbb{E}}
\newcommand{\N}{\mathbb{N}}
\renewcommand{\P}{\mathbb{P}}
\newcommand{\R}{\mathbb{R}}
\newcommand{\Z}{\mathbb{Z}}
\newcommand{\Uc}{\mathcal{U}}
\newcommand{\Bc}{\mathcal{B}}
\newcommand{\Fc}{\mathcal{F}}
\newcommand{\Hc}{\mathcal{H}}
\newcommand{\Gc}{\mathcal{G}}
\renewcommand{\Hc}{\mathcal{H}}
\newcommand{\Oc}{\mathcal{O}}
\newcommand{\Nc}{\mathcal{N}}
\newcommand{\Ec}{\mathcal E}
\renewcommand{\a}{\alpha}
\renewcommand{\b}{\beta}
\renewcommand{\d}{\delta}
\newcommand{\e}{\epsilon}
\newcommand{\pd}{\partial}
\newcommand{\graph}{\operatorname{graph}}
\newcommand{\Id}{\operatorname{Id}}
\newcommand{\T}{\mathbb T}
\newcommand{\Cc}{\mathcal C}
\newcommand{\Leb}{\operatorname{Leb}}
\renewcommand{\graph}{\operatorname{graph}}
\newcommand{\uo}{{\underline{\omega}}}
\newcommand{\Gr}{\operatorname{Gr}}
\newcommand{\Vc}{\mathcal V}
\newcommand{\Tr}{\operatorname{Tr}}
\newcommand{\modone}{\, (\text{mod }1)}
\author{Alex Blumenthal, Jinxin Xue, and Yun Yang}
\begin{document}
\maketitle

\begin{abstract}
In this paper, we give a quantitative estimate for the sum of the first $N$ Lyapunov exponents for random perturbations of a natural class $2N$-dimensional volume-preserving systems exhibiting strong hyperbolicity on a large but noninvariant subset of phase space. Concrete models covered by our setting include systems of coupled standard maps, in both `weak' and `strong' coupling regimes.
\end{abstract}
\tableofcontents
\section{Introduction}

Many systems, including large classes of those of physical interest,
exhibit strong sensitivity with respect to initial conditions. 
Mathematically, this behavior is described by 
 Lyapunov exponents:
for a smooth map $F : M \to M$ on a manifold $M$ and a point $x \in M$, 
the Lyapunov exponents of $F$ along the orbit $\{ F^i x\}$ are the possible values of
\[
\lambda(x, v) := \lim_{n \to \infty} \frac{1}{n} \log \| D_x F^n(v) \| \, , 
\]
when these limits exist, as $v$ ranges over tangent directions $T_x M$. 
If $\lambda(x, v) > 0$ for some $(x, v) \in T M$, then $F$ exhibits exponential
separation of trajectories in the phase space $M$ along the trajectory
$\{ F^i x\}$. For more discussion, see, e.g., \cite{barreira2002lyapunov, wilkinson2017lyapunov, young2013mathematical}.

%

Away from uniformly hyperbolic/Anosov settings, there can be extreme
challenges in actually verifying that a given system, 
even a simple, low-dimensional one,
admits positive Lyapunov exponents on an \emph{observable} subset of phase space (i.e., positive-volume). This can be the case even when a positive Lyapunov exponent is 
 ``obvious'' in numerical experiments. 

Exemplifying these challenges is the Chirikov standard map family
\[
(I, \theta) \mapsto \Phi_L (I, \theta) = (I + L \sin 2 \pi \theta, \theta + I + L \sin 2 \pi \theta) \, , 
\]
where $L \in \R$ is a real parameter and both coordinates $I, \theta$ are 
taken modulo $1$. 
Introduced by Chirikov \cite{Chirikov79},
the standard map is a fundamental toy model 
describing the dynamics along `stochastic boundary layers' formed by 
resonances in perturbed Hamiltonian systems, capturing the intricate interaction in phase space between `regular' (elliptic) and `chaotic' (seemingly stochastic) motion. For more discussion,
see \cite{chirikov2008chirikov}.

When $L \gg 1$, the mapping $\Phi_L$ exhibits strong hyperbolicity, 
except along an $O(L^{-1})$ neighborhood of the vertical lines 
$\{ \theta = \pi/2, 3 \pi / 2\}$. The volume of these 
\emph{critical strips}, where hyperbolicity fails, approaches 0 as $L \to \infty$, and so
one might expect $\lambda(x, v) > 0$ for most $x$; this is corroborated by a wealth of 
numerical evidence. However, to prove this mathematically rigorously is
a notorious open problem: to date, no-one has proved that $\Phi_L$ admits a positive Lyapunov
exponent on a positive-area set for any value of $L$ (equivalently, by Pesin's entropy formula, that
Lebesgue measure has positive metric entropy for $\Phi_L$). 
The primary challenge to overcome is \emph{cone-twisting}, i.e., when previously expanded
tangent directions can, upon the trajectory entering the critical set, be `twisted'
into strongly contracting directions. 
Estimating Lyapunov exponents for models of this kind
amounts to an incredibly delicate cancellation problem between phases of growth
and decay, all depending on the (time-varying) orientation of tangent directions. 

These challenges are real, 
as evidenced by known results on the relative density of elliptic periodic orbits 
in phase space (e.g., \cite{duarte1994plenty}), which imply that even when $L$ is taken arbitrarily
large, there may be positive-area regions of phase space with zero Lyapunov exponents. 
In the positive direction, Gorodetski has shown that for a residual subset of $[L_0, \infty)$, $L_0 \gg 1$ taken sufficiently large, the set with a positive Lyapunov exponent has Hausdorff dimension 2  \cite{gorodetski2012stochastic}, although this is quite far from a positive-area set. 
We also mention the more recent work of Berger and Turaev, whose work on perturbations of elliptic islands for surface diffeomorphisms implies that $\Phi_L$
is $C^\infty$ close to a volume-preserving mapping admitting a positive Lyapunov exponent on a 
positive-area set \cite{Berge19}. 
Lastly, we note that this brief discussion omits many works and indeed entire subfields related to the 
standard map, e.g., Schr\"odinger cocycles. 
We refer the readers to the introduction of \cite{BXY1} for more discussion.

\subsubsection*{Tractability of Lyapunov exponents in the presence of noise}

The problem of estimating or computing Lyapunov exponents is far more tractable when the dynamics is subjected to sufficiently ``nondegenerate'' (i.e., absolutely continuous) IID random perturbations applied at each timestep.

To establish these ideas, let us summarize the results from the work
\cite{BXY1} on Lyapunov exponents for 
random perturbations of the standard map $\Phi_L$. 
Let $\uo := (\omega_1, \omega_2, \cdots)$ be an IID sequence of 
random variables distributed uniformly on $[-\e, \e]$, where $\e > 0$ is a small parameter.
For $n \geq 1$, consider the random compositions 
\[
F^n_\uo = F_{\omega_n} \circ \cdots \circ F_{\omega_1} \, , \quad F_\omega(\theta, I) := F(\theta + \omega, I) \,. 
\]
It is not hard to show (Lemma 5 of \cite{BXY1}) that for any $\e > 0, \exists \lambda_1^\e \geq 0$ such that for any fixed $x \in T_x \T^2$, we have
$\lambda_1^\e = \lim_{n \to \infty} \frac{1}{n} \log \| D_x F^n_\uo\|$ with probability 1. 
Of interest is whether or not 
\begin{align}\label{eq:reflects}
\lambda_1^\e \approx \log L \, , 
\end{align} 
i.e., whether the top Lyapunov exponent $\lambda_1^\e$ actually `reflects' the fact that $F$ has such strong hyperbolicity on 
a large proportion of phase space.\footnote{It is a folklore theorem \cite{Mattingly} that $\lambda_1^\e > 0$ 
for all $\e > 0$, using a generalization of Furstenberg's seminal work \cite{furstenberg1963noncommuting} on Lyapunov 
exponents of IID matrices (e.g., \cite{carverhill1987furstenberg} or \cite{ledrappier1986positivity}). However, we note that this method yields no quantitative information on $\lambda_1^\e$.}


For the standard map $F = \Phi_L$, it was proved in \cite{BXY1} that 
for large $L$ and for all $\e \geq e^{- L}$, equation \eqref{eq:reflects} holds. 
Of particular interest is the fact that this result allows the deterministic map $F$
to have elliptic islands: when $\e \approx e^{-L}$, trajectories of the corresponding
random maps can linger in islands for very long timescales. This is surprising, 
in view of the fact that cone twisting near elliptic islands is a major source of difficulty
in studies of the (deterministic) standard map. 

In comparison with approaches to Lyapunov exponents for deterministic models
exhibiting cone twisting (for example, work on the Henon map, e.g., \cite{benedicks1991Henon}, and work on rank-one attractors, e.g., \cite{wang2001strange}), the method used in \cite{BXY1} is quite short and straightforward. 
Results in a similar vein include the works 
\cite{lian2012positive, blumenthal2018positive} (see also \cite{galatolo2017existence}), which study random perturbations of 
1D mappings with a critical point, and the works \cite{BXY2, ledrappier2003random} studying random perturbations of 2D mappings exhibiting cone twisting. 

\subsubsection*{Results in this paper}

The aim of this paper is to extend this program to a class of volume-preserving systems of arbitrarily high dimension exhibiting strong hyperbolicity on a large yet noninvariant subset of phase space. 
We aim to make estimates on all Lyapunov exponents, not just the `top' exponent. 
Although our approach in this paper is inspired by that of \cite{BXY1}, the 
higher-dimensional setting introduces several new layers of complexity which must be 
contended with; see Section \ref{subsec:commentsAndComparison} for more discussion. 

The abstract setting we introduce below includes systems of coupled
Chirikov standard maps in a variety of coupling regimes: 
for $N > 1$ we consider $N$
standard map oscillators $(I_i, \theta_i) \in \T^1 \times \T^1, i = 1,\cdots, N$, with a time evolution $(I_i, \theta_i)
\mapsto (\bar I_i, \bar \theta_i)$ defined by
\begin{align}\label{eq:standardMapFormula}
\begin{aligned}
 \overline{I}_i & = I_i + L \sin 2 \pi \theta_i + \sum_{j \neq i} \mu_{ij} \sin 2 \pi (\theta_j - \theta_i) \, ,\\
\overline{\theta}_i & = \theta_i + \bar I_i  \, ,
\end{aligned}
\end{align} 
where $\T^1$ is parametrized as $[0,1)$, with all quantities above regarded ``modulo 1''. 
This is a completely integrable, uncoupled system when $L, (\mu_{ij})$ are zero; 
in this paper, we will instead be interested in the so-called \emph{anti-integrable}
regime where $L \gg 1$ and the $(\mu_{ij})$ can be potentially quite large. 
Note that the 
above mapping is symplectic, hence volume-preserving, iff $\mu_{ij} = \mu_{ji}$ for all $i,j$. 

Coupled standard maps appear in the physical literature as toy models of Arnold diffusion \cite{Chirikov79} as well as the statistical properties of chaotic maps. These maps exhibit strong evidence of chaotic behavior in experiments, while mathematically rigorous verification of this chaotic behavior is hopelessly out of reach in the absence of noise. 
 We refer the readers to, e.g., 
 \cite{Guido03, Holger88, Manos08,Blake90} and the references therein for more physics background and research on coupled standard maps.

\subsection{Deterministic maps from which we perturb}

Let $\T = \T^1$ denote the circle, parametrized as $[0,1) \cong \R / \Z$, equipped with the usual addition interpreted `modulo 1'. For $N \geq 1$ we consider dynamics on the torus 
 $\T^{2N} \cong \R^{2N} / \Z^{2N}$, which we regard with the flat metric coming from $\R^{2N}$. 
Throughout we identify $T \T^{2N} \cong \T^{2N} \times \R^{2N}$.

For $N \geq 1$ we consider one-parameter families $F_L : \T^{2N} \to \T^{2N}$ 
of the form
\begin{align}\label{eq:formOfF}
 F_L(x,y) = (f_L(x) - y, x)
\end{align}
where $x = (x_1, \cdots, x_N) \in \T^N, y = (y_1, \cdots, y_n) \in \T^N$ and $ f_L : \T^{N} \to \R^N, L \geq 1$ is a one-parameter family of smooth maps. The expression $f_L(x) - y$ is interpreted `modulo 1' in all coordinates. Note that $F_L$ is invertible and volume-preserving.

We assume throughout that $f_L$ satisfies the following:
\begin{itemize}
\item[(F1)] There exists $C_0 > 0$ such that $\| D_x f_L\| \leq C_0 L$ for all $L$; and
\item[(F2)] For any $\beta \in (0,1)$, there exist $C_\beta,  c_\beta, L_\beta > 0$ so that 
for all $L \geq L_\beta$, we have that
\[
B_\beta = \{ x \in \T^N :  |\det  D_x f_L | \leq L^{N - (1 - \beta)} \} \subset \T^N
\]
has Lebesgue measure $\leq C_\beta L^{- c_\beta}$.
\end{itemize}
Families $(F_L)$ satisfying these conditions
 include a wide array of systems, including 
systems of identical coupled standard maps: 
It is straightforward to show that the system \eqref{eq:standardMapFormula}
is conjugate to $F_L$, with 
\begin{align}\label{eq:formfLstandard}
f_L(x) = \big(2 x_i + L \sin 2 \pi x_i + \sum_{j \neq i} \mu_{ij} \sin 2 \pi (x_j - x_i) \big)_{i = 1}^N \, ,
\end{align}
via the change of coordinates 
$x_i = \theta_i, \quad y_i = \theta_i - I_i \modone \, .
$

One should keep in mind the example
\begin{align}\label{eq:fLform}
f_L = L \psi + \varphi \, , 
\end{align}
where $\psi, \varphi : \T^N \to \R^N$ are fixed and $L$ is taken large. 
For $f_L$ of this form, condition (F1) is evident, 
while condition (F2) holds with $c_\beta = 1- \beta$ when
$\psi$ satisfies the transversality-type condition
\begin{align}\label{eq:transConditionIntro}
\{ \det D_x \psi = 0 \} \cap \{ \nabla \det D_x \psi = 0 \} = \emptyset \, ;
\end{align}
see Lemma \ref{lem:F2Generic}.  For $N = 2$, we show (Proposition \ref{prop:F2genericN2}) that
\eqref{eq:transConditionIntro} holds for a $C^2$-generic set of $\psi$.

\subsubsection*{`Predominant' hyperbolicity of $F_L$}

Let us describe briefly the hyperbolic character of the dynamics of $F_L$. 
Throughout, we identity $T \T^{2N} \cong \T^{2N} \times \R^{2N}$. 
We write $\R^{2N} = \R^x \oplus \R^y$, where $\R^x = \operatorname{Span} \{ \frac{\pd}{\pd x_1}, \cdots, \frac{\pd}{\pd x_N}\}$ and $\R^y = \operatorname{Span}\{ \frac{\pd}{\pd y_1}, \cdots, \frac{\pd}{\pd y_N}\}$, each of which is parametrized by $\R^N$. For $\alpha > 0$, we define
\[
\Cc^x_\a := \{ (u,v) \in \R^{2N} : \| v \| \leq \a \| u \| \}
\]
of vectors $\alpha$-close to the `horizontal' space $\R^x$. 

Conditions (F1) and (F2) ensure uniform expansion to order $L$ of tangent vectors
in $\Cc^x_\alpha$ for moderate values of $\alpha$. To wit, if $z = (x,y)$ is such that
$\det(D_x f_L) \geq L^{N - (1 - \beta)}$, then (F1) implies\footnote{Here and throughout the paper, if $A = A(L), B = B(L)$ are functions of $L$, 
we write $A \lesssim B$ if $\exists C > 0$, independent of $L$, such that
$A(L) \leq C B(L)$ for all $L$ sufficiently large.}
\begin{align}\label{eq:predomHyp}
D_z F_L(\Cc^x_{1/10}) \subset \Cc^x_{1/10} \,  \quad \text{ and } \quad 
\inf_{\substack{w \in \Cc^x_{1/10} \\ \| w \| = 1}} \| D_z F_L(w) \| \gtrsim L^\beta. 
\end{align}
See Lemma \ref{lem:bulkHyp} for more details.
The `critical' set where this hyperbolicity fails is contained
in $\det(D_x f_L) \leq L^{N - (1 - \beta)}$, which by (F2) has volume $\lesssim L^{- c_\beta}$. 
We call $F_L$ \emph{predominantly hyperbolic}, since for $L \gg 1$ the strong expansion in 
equation \eqref{eq:predomHyp} holds on a large (but noninvariant) proportion of phase space.

\subsection{{Random dynamical systems (RDS)} setup}\label{subsec:noiseModel}

Fix a probability space $(\Omega_0, \Fc_0, \P_0)$ and let $\omega \mapsto R_\omega \in C^2_{\rm vol}(\T^{2N}, \T^{2N})$ be a measurable assignment to each $\omega \in \Omega_0$ of a 
$C^2$, volume-preserving diffeomorphism $R_\omega : \T^{2N} \to \T^{2N}$, to be interpreted
as the `noise' applied to the dynamics at each timestep.
Define $\Omega = \Omega_0^{\otimes \N}, \Fc = \Fc_0^{\otimes N}, \P = \P_0^{\otimes N}$
and let $\theta : \Omega \to \Omega$ be the leftward shift (which is automatically invariant and 
ergodic for $\P$). Elements $\uo \in \Omega$ are written $\uo = (\omega_1, \omega_2, \cdots)$ for
$\omega_i \in \Omega_0, i \geq 1$. 

In this paper, we consider random compositions 
\[
F^n_\uo = F_{\omega_n} \circ \cdots \circ F_{\omega_1} \, , \quad n \geq 1, \quad \uo = (\omega_i)_{i \in \N} \in \Omega
\]
of the (IID) random maps
\[
F_\omega = R_\omega \circ F \, , \quad \omega \in \Omega_0 \, ,
\]
where $F = F_L$ is as in \eqref{eq:formOfF}. 

Throughout, we assume the following properties (E), (C) and (ND) of the $R_\omega$, where E, C and ND are abbreviations for \emph{Ergodic, Cone} and \emph{Nondegeneracy}, respectively.

\begin{itemize}
\item[(E)] For $z \in \T^{2N}$, the law $Q(z, \cdot) = \P_0(R_\omega z \in \cdot)$ on $\T^{2N}$ 
is absolutely continuous. Writing $q(z, \cdot) = \frac{d Q(z, \cdot)}{d \Leb}: \T^{2N} \to \R_{\geq 0}$ for the corresponding density, we assume $\exists c > 0$ such that
\begin{align}
q(z, z') > 0 \quad \text{ for all } \, z' \in B_c(z) \, , 
\end{align}
where $B_c(z)$ is the open $c$-ball centered at $z \in \T^{2N}$. 
\item[(C)] With probability $1$ and for all $z \in \T^{2N}$, we have:
\begin{itemize}
\item $D_z R_\omega (\Cc^x_{1/20}) \subset \Cc^x_{1/10}$ ; and
\item $\| D_z R_\omega\|, \| (D_z R_\omega)^{-1} \| \leq 2$.
\end{itemize}
\item[(ND)] For $(z, E) \in \Gr_N(\T^{2N})$, the measure 
\begin{align}\label{eq:defnhatQ}
\hat Q((z, E), \cdot) = \P_0 ( (R_\omega z, D_z R_\omega(E)) \in \cdot)
\end{align}
on $\Gr_N(\T^{2N})$ is absolutely continuous with respect to the (normalized) Riemannian volume $\mathfrak m$
 on $\Gr_N(\T^{2N})$. Its density $\hat q((z, E), \cdot) := \frac{d \hat Q((z, E), \cdot)}{d \mathfrak m}$
satisfies
\[
M := \sup_{(z, E) \in \Gr_N(\T^{2N})} \| \hat q((z, E), \cdot) \|_{L^\infty} < \infty \, .
\] 
\end{itemize}

Above, for $1 \leq k < m$, we write $\Gr_k(\R^m)$ for the Grassmanian of $k$-dimensional subspaces of $\R^m$. We write $\Gr_k(\T^{2N}) \cong \T^{2N} \times \Gr_k(\R^{2N})$ for the 
Grassmanian bundle of $k$-planes in tangent space $T \T^{2N}$, and 
\[
\mathfrak m = \Leb_{\T^{2N}} \times \Leb_{\Gr_N(\R^{2N})}
\]
for the (normalized) Riemannian volume on the 
Grassmanian bundle $\Gr_N(\T^{2N}) \cong \T^{2N} \times \Gr_N(\R^{2N})$.

Condition (C) ensures that the randomizations we use do not introduce more cone-twisting
than already present, while condition (E) ensures almost-sure constancy of Lyapunov exponents: 
Lyapunov exponents of the random compositions
$F^n_\uo$ exist a.s. and with probability 1 by the Multiplicative Ergodic Theorem.
Precisely, we have the following:

\begin{lem}\label{lem:existLE}
Assume condition $(E)$ holds. Then, there exist $2N$ (deterministic) real constants 
$\lambda_1 \geq \cdots \geq \lambda_{2N}$ such that
for $\P \times \Leb_{\T^{2N}}$-a.e. $(\uo, z) \in \Omega \times \T^{2N}$ and any $v \in T_z \T^{2N}$, we have that
\[
\lim_{n \to \infty} \frac{1}{n} \log \| D_z F^n_\uo(v) \|
\]
exists and equals $\lambda_i$ 
for some $i$. The \emph{Lyapunov exponents} $(\lambda_i)$ satisfy\footnote{For a matrix $A$ we write $\sigma_1(A), \sigma_2(A), \cdots$ for the singular values of $A$. See Appendix
\ref{sec:SVDapp} for more discussion. }
\begin{align}
\lambda_i = \lim_{n \to \infty} \frac{1}{n} \log \sigma_i(D_z F^n_\uo) 
\end{align}
for $\P \times \Leb_{\T^{2N}}$-a.e. $(\uo, z)$.

\end{lem}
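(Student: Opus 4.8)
The plan is to deduce this statement from the standard Multiplicative Ergodic Theorem (MET) for random dynamical systems, in the form due to Ruelle/Kifer (see, e.g., the treatments in Kifer's book on random perturbations, or Arnold's \emph{Random Dynamical Systems}). The natural framework is the skew product $\Theta : \Omega \times \T^{2N} \to \Omega \times \T^{2N}$ over the base system $(\Omega, \Fc, \P, \theta)$, with fiber maps $z \mapsto F_\omega z = R_\omega F z$, and the associated linear cocycle $\mathcal{A}(n, \uo, z) = D_z F^n_\uo$. Since $\theta$ is ergodic for $\P$ and $\P$ is $\theta$-invariant by construction (it is a Bernoulli shift), the base is an ergodic measure-preserving system. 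The MET then asserts: for any $\Theta$-invariant probability measure $\nu$ on $\Omega \times \T^{2N}$ whose marginal on $\Omega$ is $\P$ and for which the integrability condition $\int \log^+ \|D_z F_\omega^{\pm 1}\| \, d\P_0(\omega)\, d(\text{marginal})(z) < \infty$ holds, there are $\nu$-a.s. constants (the Lyapunov exponents, a priori depending on the ergodic component of $\nu$) realizing the asymptotics of $\frac1n \log \|\mathcal{A}(n,\uo,z) v\|$ and of $\frac1n \log \sigma_i(\mathcal{A}(n,\uo,z))$.

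The key steps, in order, are as follows. First, I would take $\nu = \P \times \Leb_{\T^{2N}}$ and check it is $\Theta$-invariant: this is immediate since $F$ is volume-preserving (as noted after \eqref{eq:formOfF}) and each $R_\omega$ is volume-preserving by hypothesis, so each fiber map $F_\omega$ preserves $\Leb_{\T^{2N}}$; combined with $\theta$-invariance of $\P$ on the base, $\Theta$ preserves $\P \times \Leb_{\T^{2N}}$. Second, I would verify the MET integrability condition: condition (C) gives $\|D_z R_\omega\|, \|(D_z R_\omega)^{-1}\| \leq 2$ uniformly, while $F$ is a fixed $C^2$ (indeed $C^\infty$) diffeomorphism of the compact manifold $\T^{2N}$, so $\|D F^{\pm 1}\|$ is bounded; hence $\log^+\|D_z F_\omega^{\pm 1}\|$ is bounded uniformly in $\omega, z$, and integrability is trivial. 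Third, I would apply the MET to obtain, for $\nu$-a.e.\ $(\uo, z)$, the existence of the limits and their identification with singular-value growth rates; the only remaining issue is that the MET produces exponents constant on ergodic components of $\nu$, so I must argue they are globally constant $\P \times \Leb_{\T^{2N}}$-a.s. For this I would invoke condition (E): the fiberwise transition probabilities $Q(z,\cdot)$ are absolutely continuous with density positive on a ball $B_c(z)$, which forces (a standard argument using a finite-step Chapman--Kolmogorov/mixing estimate, since $\T^{2N}$ is compact and connected) that $\Leb_{\T^{2N}}$ is the unique stationary measure and, more to the point, that the skew product $(\Theta, \P\times\Leb)$ is ergodic; hence the exponents are deterministic constants $\lambda_1 \geq \cdots \geq \lambda_{2N}$. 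Finally, passing from the subspace-exponent statement to ``$\lambda(z,v) = \lambda_i$ for every $v$, not just a.e.\ $v$'' is the usual consequence of the MET filtration: the flag $\{0\} = V_{2N+1} \subsetneq V_{2N} \subsetneq \cdots \subsetneq V_1 = \R^{2N}$ has $\lambda(z,v) = \lambda_i$ exactly for $v \in V_i \setminus V_{i+1}$, which exhausts all $v \neq 0$.

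The main obstacle — or rather the only point requiring genuine care rather than bookkeeping — is the ergodicity/almost-sure-constancy step, i.e.\ leveraging (E) to rule out nonconstant exponents across ergodic components. One has to be slightly careful that (E) as stated only gives positivity of the density on a \emph{fixed-radius} ball $B_c(z)$, not everywhere; the standard fix is to note that by compactness one can chain together finitely many such balls to connect any two points, so after finitely many noise steps the $n$-step transition kernel has a component absolutely continuous with positive density on all of $\T^{2N}$, which yields uniqueness of the stationary measure and ergodicity of the skew product. This is presumably exactly the content the paper attributes to (E) when it writes ``condition (E) ensures almost-sure constancy of Lyapunov exponents,'' so I expect the author's proof to be short, citing the MET and this standard ergodicity argument; I would structure my proof the same way, relegating the chaining argument to a sentence or a reference. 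The identification $\lambda_i = \lim \frac1n \log \sigma_i(D_z F^n_\uo)$ is then just the restatement of the MET conclusion in singular-value language (cf.\ the referenced Appendix \ref{sec:SVDapp}), requiring nothing new.
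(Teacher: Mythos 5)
Your proposal is correct and follows essentially the same route as the paper: invoke the Multiplicative Ergodic Theorem for the linear cocycle $D_z F^n_\uo$ over the skew product $\tau$, observe that $\P\times\Leb_{\T^{2N}}$ is $\tau$-invariant since each $F_\omega$ is volume-preserving, and use condition (E) to show that $\Leb_{\T^{2N}}$ is the unique (hence ergodic) stationary measure, so that $\P\times\Leb$ is $\tau$-ergodic and the exponents are deterministic. The paper's ergodicity step is phrased slightly differently (absolute continuity of stationary measures yields a countable ergodic decomposition, and nearby points lie in the same component, following Lemma 5 of \cite{BXY1}) versus your finite chaining of $c$-balls, but these are variants of the same standard Kifer-type argument.
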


For the proof of Lemma \ref{lem:existLE}, see Section \ref{subsec:relevMarkovGrass}. 
 For commentary on the role of condition (ND), 
see Section \ref{subsec:commentsAndComparison} below. 
Explicit examples of noise models $R_\omega$ satisfying (ND) are
 constructed in Section \ref{sec:noiseModel}.

\begin{rmk}
 Consider a family $R_\omega^\epsilon, \e > 0$ of random perturbations satisfying (ND)
 for which $\lim_{\e \to 0} d_{C^2}(R_\omega^\e, \Id) = 0$, where $\Id$ is the identity
mapping on $\T^{2N}$. The corresponding
bounds $M^\epsilon = \sup \| \hat q^\e((z, E), \cdot)\|_{L^\infty}$ in condition (ND) would satisfy $M^\e \to \infty$ as $\e\rightarrow 0$. Thus, 
$M$ measures, in a statistical sense, how close the $R_\omega$ are to
the identity mapping: when $R_\omega$ is $\P_0$-typically very close to the identity mapping, 
$M$ is very large (note that the converse is false: $M$ may be large even when $d_{C^2}(R_\omega, \Id)$ is typically of order 1).  
\end{rmk}

\subsection{Results}\label{subsection:results}
Our main results estimate all Lyapunov exponents $(\lambda_i)$.

\begin{thm}\label{thm:main}
Assume the family $f = f_L$ satisfies $($F1$)$, $($F2$)$ above and the randomizations $R_\omega$ satisfy $(E)$, $(C)$ and $($ND$)$. 
Fix $\alpha \in (0,1), \beta \in (0,1)$ and $\delta\in (0,c_\beta)$ where $c_\beta$ is in (F2). Let $L$ be sufficiently large in terms of these parameters. Finally, assume $
M \leq L^{\frac12 \beta L^{c_\beta - \delta} }$
where $M$ is as in condition $($ND$)$ above. Then,  
the Lyapunov exponents $(\lambda_i)$ of $F^n_\uo = F_{\omega_n} \circ \cdots \circ F_{\omega_1}, F_\omega := 
R_\omega \circ F$, satisfy
\begin{align}\label{eq:indivLyapEst}
\lambda_N > 0 > \lambda_{N + 1} \, , \quad \text{ and } \quad  \min\{ | \lambda_i | \} \geq \alpha \log L \, .
\end{align}
\end{thm}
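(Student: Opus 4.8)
The plan is to reduce the estimate on \emph{all} Lyapunov exponents to a statement about just two numbers: the sum $\lambda_1 + \cdots + \lambda_N$ of the top $N$ exponents, together with the symmetry constraints forced by volume preservation. Since $F$ and all $R_\omega$ are volume-preserving, $\sum_{i=1}^{2N} \lambda_i = 0$. The map $F_L(x,y) = (f_L(x) - y, x)$ has the form of a (generalized) standard map, and composed with $R_\omega$ the random cocycle should be conjugate (or closely related) to its own inverse-transpose in a way that pairs $\lambda_i$ with $-\lambda_{2N+1-i}$; more precisely, I would exploit the symplectic structure available in the coupled-standard-map case, or, in the abstract setting, directly analyze $DF_L$ which in block form is $\begin{pmatrix} D f_L & -I \\ I & 0\end{pmatrix}$, whose structure forces the Lyapunov spectrum to be symmetric about $0$. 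Granting that symmetry, it suffices to prove $\lambda_1 + \cdots + \lambda_N \geq N\alpha \log L$ \emph{and} that the spectrum splits, i.e.\ $\lambda_N > 0$; the companion bound $\lambda_{N+1} < 0$ and $|\lambda_{N+1}|,\dots,|\lambda_{2N}| \geq \alpha\log L$ then come for free by symmetry. Actually, to get the \emph{minimum} of $|\lambda_i|$ bounded below rather than just the sum, I would need a lower bound on $\lambda_N$ specifically, which is the delicate point (see below).

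The heart of the argument is the lower bound on the sum of the first $N$ exponents, and for this I would use the standard fact that
\[
\lambda_1 + \cdots + \lambda_N = \lim_{n\to\infty} \frac1n \log \big\| \wedge^N D_z F^n_\uo \big\| = \lim_{n\to\infty} \frac1n \int \log \big\| \wedge^N D_z F^n_\uo|_E \big\| \, (\cdots)
\]
where one tracks the growth of the most-expanded $N$-dimensional subspace, equivalently the action of the cocycle on $\Gr_N$. The strategy, following the philosophy of \cite{BXY1}, is: (i) on the predominantly-hyperbolic bulk, where $\det D_x f_L \geq L^{N-(1-\beta)}$, the cone $\Cc^x_{1/10}$ is forward-invariant (using (F1), (F2) and condition (C)), and on it $\wedge^N DF_L$ expands by a factor $\gtrsim L^{\beta}$ per step — more precisely the horizontal $N$-plane $\R^x$ is expanded by roughly $|\det D_x f_L|$; (ii) the randomization, via condition (ND) with its bound $M$, acts like a bounded-density kernel on $\Gr_N(\T^{2N})$, so one can run a Markov-chain / stationary-measure argument showing that a definite fraction of time (quantitatively controlled by $M$, hence the hypothesis $M \leq L^{\frac12\beta L^{c_\beta-\delta}}$) the orbit's tracked $N$-plane sits inside the good cone over the good set; (iii) the bad set has measure $\lesssim L^{-c_\beta}$ by (F2), so visits to it are rare, and on it the loss is at worst $O(\log L)$ per step (from (F1) and (C)); (iv) a Borel–Cantelli / large-deviations bookkeeping of good steps versus bad steps yields $\frac1n\log\|\wedge^N D F^n_\uo\| \geq N\alpha\log L$ for $L$ large. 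The quantitative matching between the density bound $M$ and the smallness $L^{-c_\beta}$ of the bad set is exactly what the hypothesis on $M$ encodes.

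The main obstacle, and the place where the higher-dimensional setting genuinely departs from \cite{BXY1}, is \textbf{cone-twisting in the Grassmannian}: unlike the top-exponent case where one only tracks a single most-expanded direction, here one must control the entire $N$-plane, and upon entering the critical set $\{|\det D_x f_L| \leq L^{N-(1-\beta)}\}$ the tracked plane can be partially twisted — some of its directions may fall out of the good cone even while others stay in it, and naive cone conditions on $\Cc^x_\alpha$ do not propagate. The fix I would pursue is a more refined invariant object on $\Gr_N(\R^{2N})$ — working with the graph representation $E = \graph(\text{Lin}(\R^x,\R^y))$ and estimating the operator norm of the associated linear map, showing it contracts back toward $0$ (i.e.\ back into the cone) geometrically fast once the orbit re-enters the bulk — combined with the nondegeneracy (ND) guaranteeing that the randomization spreads mass on $\Gr_N$ so that the plane cannot get \emph{stuck} near the bad directions. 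Controlling this recovery rate against the frequency and depth of excursions into the critical set, uniformly in $L$, is the crux; once it is in hand, the separation $\lambda_N > 0$ (as opposed to merely $\lambda_1 \geq \cdots$) follows from the same bulk-expansion estimate applied not to $\wedge^N$ but to the worst singular value among the first $N$, which is where the parameter $\alpha$ (rather than $\beta$) enters to absorb the losses.
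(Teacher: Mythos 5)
Your high-level framework agrees with the paper's in its essentials: track the Markov chain $(Z_n,E_n)$ on $\Gr_N(\T^{2N})$, use the expansion by $|\det D_xf_L|\geq L^{N-(1-\beta)}$ of horizontal $N$-planes on the bulk set $G_\beta$, use (F2) to make the critical set small, and exploit (ND)'s density bound $M$ on the stationary measure to match against that smallness. But two pieces of your outline are off. First, the spectral-symmetry step you lean on (symplectic structure or inverse-transpose conjugacy pairing $\lambda_i$ with $-\lambda_{2N+1-i}$) is not available in the abstract setting: Theorem~\ref{thm:main} does not require $f_L$ to be symplectic, and for the full random cocycle $R_\omega\circ F$ there is no such conjugacy. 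It is also unnecessary. The paper derives $\lambda_N\geq(1-(2N-1)\varepsilon)\log L$ purely from $\sum_{i=1}^N\lambda_i\geq(1-\varepsilon)N\log L$ together with the trivial upper bound $\lambda_i\leq\lambda_1\leq(1+\varepsilon)\log L$ (from (F1)), and then handles $\lambda_{N+1},\dots,\lambda_{2N}$ by the same arithmetic using $\sum_{i=1}^{2N}\lambda_i=0$; your suggestion to control the $N$-th singular value directly is both harder and not needed.

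Second, and more seriously, the cone-twisting mechanism you propose — tracking $E=\graph G$ and showing the operator norm of $G$ contracts, combined with Borel--Cantelli/large-deviations bookkeeping of good versus bad steps — does not supply the estimate actually required. The paper's argument is a \emph{one-shot} stationary-measure computation: by Lemma~\ref{lem:measureToLE} and stationarity, it suffices to bound $\int\log|\det(D_{Z_n}F|_{E_n})|\,d\nu$ from below for a single well-chosen $n$; there is no trajectory-wise Borel--Cantelli. The crux you are missing is Proposition~\ref{prop:estLebMass}: for $z\in G_\beta^n$, the Lebesgue measure on $\Gr_N(\R^{2N})$ of the set of $E$ with $D_zF^n_\uo(E)\notin\Cc^x_2$ is at most $L^{-\beta n}$. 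This is proved by SVD and the observation that such $E$ lie within $O(L^{-\beta n})$ of the set $(\Uc_\Hc)^c$ of $N$-planes meeting $\Hc^\perp$ nontransversally, which has positive codimension in $\Gr_N(\R^{2N})$. Paired with Lemma~\ref{lem:aPrioriBoundND}'s bound $\|d\nu/d\mathfrak m\|_{L^\infty}\leq M$, this gives $\nu(\Bc^{n,2})\leq ML^{-\beta n}$, which then balances the (F2)-based bound $\nu(\Bc^{n,1})\lesssim nL^{-c_\beta}$ once $n\approx L^{c_\beta-\delta}$. Also note that, contrary to your claim, the cone $\Cc^x_{1/10}$ \emph{is} forward-invariant under $D_zF$ on the bulk (Lemma~\ref{lem:bulkHyp}(b)(i)); the real danger is that the stationary measure could concentrate on planes $E$ \emph{outside} the good cone, and it is (ND) together with Proposition~\ref{prop:estLebMass} that forecloses that possibility.
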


\begin{rmk}
Observe that (ND) is imposed only on the randomization $R_\omega$ independently of the 
 deterministic dynamics $F_L$. Thus, 
  upper bounds of the form $M\leq G(L)$, $G(L)$ an increasing function in $L$,
 are easily satisfied by taking $L$ large. 
\end{rmk}

Theorem \ref{thm:main} applies to a wide class of identical coupled standard maps with
relatively general forms of coupling. 
\begin{thm} \label{ThmStd}
Let $\alpha, \beta \in (0,1)$ and $\delta > 0$ {be such that $1-\beta-\delta>0$}. {Let $N \geq2$ and consider a family of coupled standard maps $F_L$ as in
 \eqref{eq:formOfF} with 
 \[
f_L(x) = \big(2 x_i + L \sin 2 \pi x_i + \sum_{j \neq i} \mu_{ij} \sin 2 \pi (x_j - x_i) \big)_{i = 1}^N \, ,
\]
where the coefficients $\mu_{ij}$ are fixed and $L$ is sufficiently large depending on $(\mu_{ij})$.  
Assume that the randomizations $R_\omega$ satisfy (E), (C) and (ND), with
$
M \leq L^{\frac16 \beta L^{1 - \beta - \delta}}
$
where $M$ is as in condition $($ND$)$ above. 
Then, 
the Lyapunov exponents $(\lambda_i)$ of $F^n_\uo = F_{\omega_n} \circ \cdots \circ F_{\omega_1}, F_\omega := 
R_\omega \circ F$, satisfy
$$
\lambda_N > 0 > \lambda_{N + 1} \, , \quad \text{ and } \quad  \min\{ | \lambda_i | \} \geq \alpha \log L \,.$$}
\end{thm}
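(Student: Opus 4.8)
The plan is to deduce Theorem \ref{ThmStd} from Theorem \ref{thm:main} by verifying that the coupled standard map family satisfies hypotheses (F1) and (F2) with quantitative control on the exponent $c_\beta$. Write $f_L = L\psi + \varphi$ where $\psi(x) = (\sin 2\pi x_i)_{i=1}^N$ and $\varphi(x) = (2x_i + \sum_{j\neq i}\mu_{ij}\sin 2\pi(x_j-x_i))_{i=1}^N$. Condition (F1) is immediate since $\|D_x f_L\| \leq L\|D\psi\|_\infty + \|D\varphi\|_\infty \lesssim L$, the implicit constant depending on the fixed coefficients $(\mu_{ij})$. The substance is (F2): I must show that for each $\beta\in(0,1)$, the bad set $B_\beta = \{x : |\det D_x f_L| \leq L^{N-(1-\beta)}\}$ has Lebesgue measure $\lesssim L^{-c_\beta}$ with $c_\beta$ controllable from below --- specifically large enough that the admissible range includes some $\delta\in(0,c_\beta)$ compatible with the stated noise bound $M \leq L^{\frac16\beta L^{1-\beta-\delta}}$.

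The natural route is to invoke Lemma \ref{lem:F2Generic}, which (per the discussion around \eqref{eq:fLform} and \eqref{eq:transConditionIntro}) gives (F2) with $c_\beta = 1-\beta$ whenever $\psi$ satisfies the transversality condition $\{\det D_x\psi = 0\}\cap\{\nabla\det D_x\psi = 0\} = \emptyset$. Here $D_x\psi = 2\pi\,\mathrm{diag}(\cos 2\pi x_1,\dots,\cos 2\pi x_N)$, so $\det D_x\psi = (2\pi)^N\prod_i \cos 2\pi x_i$, which vanishes exactly when some $\cos 2\pi x_i = 0$, i.e. $x_i \in \{1/4, 3/4\}$. On that locus, $\nabla\det D_x\psi$ has $i$-th component $-(2\pi)^{N+1}\sin 2\pi x_i\prod_{k\neq i}\cos 2\pi x_k$; I need this to be nonzero somewhere, but in fact at a point where \emph{two} coordinates $x_i, x_j$ both hit $\{1/4,3/4\}$ the whole gradient vanishes (every term in every partial has a vanishing cosine factor), so the naive transversality condition \emph{fails} for this $\psi$ when $N\geq 2$. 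Consequently, the direct appeal to Lemma \ref{lem:F2Generic} with $\psi$ alone is not available, and I expect this to be the main obstacle: one must instead estimate $|B_\beta|$ for the \emph{full} map $f_L = L\psi + \varphi$, exploiting that $\varphi$ (containing the $2x_i$ shift and the coupling terms) perturbs $D_x f_L$ away from the degenerate diagonal structure. Concretely, $D_x f_L = 2\pi L\,\mathrm{diag}(\cos 2\pi x_i) + D_x\varphi$, and $D_x\varphi$ is a fixed bounded matrix whose diagonal contains $2 + 2\pi\sum_{j\neq i}\mu_{ij}\cos 2\pi(x_j - x_i)$ and whose off-diagonal entries are $-2\pi\mu_{ij}\cos 2\pi(x_j-x_i)$.

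The key step is therefore a direct measure estimate: expand $\det D_x f_L$ as a polynomial in $L$, $\det D_x f_L = \sum_{k=0}^N L^k P_k(x)$ where $P_N(x) = (2\pi)^N\prod_i\cos 2\pi x_i$ and the lower-order $P_k$ come from replacing some diagonal cosine factors by rows/columns of $D_x\varphi$. On the region where all $|\cos 2\pi x_i|$ are bounded below by a threshold $\tau$, the leading term dominates and $|\det D_x f_L| \gtrsim (L\tau)^N \gg L^{N-(1-\beta)}$ once $\tau \gg L^{-(1-\beta)/N}$; this region's complement has measure $\lesssim N\tau \lesssim L^{-(1-\beta)/N}$. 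Near the locus where exactly one cosine, say $\cos 2\pi x_1$, is small, one factors $\det D_x f_L$ as a near-product of $(2\pi L\cos 2\pi x_1 + \text{bounded})$ times a cofactor of size $\gtrsim (L\tau)^{N-1}$ that is nondegenerate in $x_1$ (its $x_1$-dependence is $O(1)$ so it does not conspire to cancel), and a one-dimensional slicing argument in the $x_1$ variable bounds the measure of $\{|\det| \leq L^{N-(1-\beta)}\}$ by $\lesssim L^{-(1-\beta)} \cdot L^{-(N-1)(1-\beta)/N}$ or similar. Iterating this stratification over how many coordinates are near-critical, and being careful that the coupling terms $D_x\varphi$ break the degeneracy whenever two or more cosines vanish simultaneously (so the intersection of two or more critical hyperplanes contributes negligibly), yields $|B_\beta| \lesssim L^{-c_\beta}$ with $c_\beta$ of order $(1-\beta)/N$ --- hence the appearance of the factor $1/N$-type loss, and why the theorem is stated with the weaker noise bound $M \leq L^{\frac16\beta L^{1-\beta-\delta}}$ rather than the $L^{\frac12\beta L^{c_\beta-\delta}}$ of Theorem \ref{thm:main}: one chooses $\delta$ so that $1-\beta-\delta < c_\beta$, absorbing the dimensional loss. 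With (F1) and (F2) established and $\delta$ chosen in $(0,c_\beta)$ with $1-\beta-\delta>0$ as hypothesized, Theorem \ref{thm:main} applies verbatim and delivers $\lambda_N > 0 > \lambda_{N+1}$ together with $\min\{|\lambda_i|\} \geq \alpha\log L$, completing the proof. The one genuinely delicate point to get right is the cofactor nondegeneracy claim in the stratification --- ensuring no hidden algebraic cancellation among the $P_k(x)$ forces $\det D_x f_L$ to be small on a set larger than the crude product bound predicts --- and this is where I would spend the most care, likely isolating it as a separate lemma about the structure of $\det(L\cdot\mathrm{diag}(a_i) + B)$ for fixed $B$.
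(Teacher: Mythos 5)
You correctly identify that the whole content of Theorem \ref{ThmStd} is in verifying (F2) for the coupled standard map, and you correctly observe that the transversality criterion of Lemma \ref{lem:F2Generic} fails for $\psi(x)=(\sin 2\pi x_i)_i$ when $N\geq 2$ (the gradient of $\det D\psi$ vanishes wherever two or more cosines vanish simultaneously). But the measure estimate you propose, and the value of $c_\beta$ you land on, are both wrong, and not in a way that can be repaired by inspection.

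The central error is in your volume computation. You threshold each factor separately --- requiring $|\cos 2\pi x_i|\geq\tau$ for all $i$ --- and bound the complement by $\lesssim N\tau$, then choose $\tau$ so that $(L\tau)^N\gg L^{N-(1-\beta)}$, i.e.\ $\tau\sim L^{-(1-\beta)/N}$, landing on $c_\beta\sim(1-\beta)/N$. This is far weaker than what the sub-level set of the \emph{product} actually costs: the set $\{\prod_i|\cos 2\pi x_i|\leq\delta\}$ has measure $\sim\delta\cdot(\log\delta^{-1})^{N-1}$, not $\delta^{1/N}$, because most of the mass where the product is small has one factor tiny and the rest of order one. With $\delta=L^{2\beta-1}$ this gives $|B_\beta|\lesssim L^{3\beta-1}$ for large $L$, hence $c_\beta=1-3\beta$ \emph{independent of $N$}. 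Your $N$-dependent $c_\beta$ cannot yield the theorem: substituting $c_\beta=(1-\beta)/N$ into the hypothesis of Theorem \ref{thm:main} requires $M\leq L^{\frac12\beta L^{(1-\beta)/N-\delta}}$, which for $N\geq 2$ is strictly more restrictive than the stated bound $M\leq L^{\frac16\beta L^{1-\beta-\delta}}$. Your explanation of the constant $\tfrac16$ as ``absorbing a $1/N$ dimensional loss'' is also incorrect; it has nothing to do with $N$. One applies Theorem \ref{thm:main} with $\beta$ replaced by $\beta'=\beta/3$, so that $c_{\beta'}=1-3\beta'=1-\beta$ and $\tfrac12\beta'=\tfrac16\beta$, exactly reproducing the stated condition.

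The paper's actual argument is much simpler than the stratification you outline and avoids the pitfalls you worry about. One expands $\det(L^{-1}D_x f_L)$ by Leibniz and notes that the dominant term is $(2\pi)^N\prod_i\cos 2\pi x_i$, with everything else $O(L^{-1})$ once $L$ is large relative to $\max_{ij}|\mu_{ij}|$. Hence $B_\beta\subset\{|\prod_i\cos 2\pi x_i|\leq L^{2\beta-1}\}$, with no case analysis on how many cosines are small and no reliance whatsoever on the coupling terms $D_x\varphi$ to ``break degeneracy'' --- that part of your plan is a red herring. The remaining volume estimate is a clean exact computation of $\Leb\{\prod_i x_i\leq\delta\}$ on $[0,1]^N$ (Lemma \ref{lem:areaComputation}), which yields $\delta\sum_{i=0}^{N-1}\frac{(-\log\delta)^i}{i!}$. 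Your concern about ``hidden algebraic cancellation among the $P_k(x)$'' does not arise in this approach, since only the leading term is used and the lower-order terms are absorbed uniformly as an $O(L^{-1})$ error. Your stratification could in principle be pushed to recover the polylogarithmic bound, but as written it does not, and the conclusion it reaches is insufficient for the theorem.
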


The setting of Theorem \ref{ThmStd} 
can be thought of as describing a kind of `weak' to `moderate' coupling 
regime: the strength of the hyperbolicity $L$ of each individual oscillator overshadows the 
coupling amplitude $\max_{ij} | \mu_{ij}|$. 
The following applies in a regime when the strength of the coupling
matches that of the individual oscillators. 

\begin{thm}\label{thm:strongCoupling}
Let $\alpha, \beta \in (0,1)$ and $\delta > 0$ be such that $1-\beta-\delta>0$. 
Let $N = 2$ and consider a family of coupled standard maps $F = F_L$ as in \eqref{eq:formOfF} with
\[
f_L(x_1, x_2) = \begin{pmatrix}
2 x_1 + L \sin 2 \pi x_1 + L \sin 2 \pi (x_2 - x_1) \\
2 x_2 + L \sin 2 \pi x_2 + L \sin 2 \pi (x_1 - x_2) 
\end{pmatrix} \, .
\]
where  $L$ is sufficiently large.  
Assume that the randomizations $R_\omega$ satisfy $(E)$, $(C)$ and $($ND$)$, with 
$M  \leq L^{\frac12 \beta L^{1 - \beta - \delta}}$
where $M$ is as in condition $($ND$)$ above. 
Then, 
the Lyapunov exponents $(\lambda_i)$ of $F^n_\uo = F_{\omega_n} \circ \cdots \circ F_{\omega_1}, F_\omega := 
R_\omega \circ F$, satisfy
$$
\lambda_N > 0 > \lambda_{N + 1}\ \mathrm{and} \ \min\{ | \lambda_i | \} \geq \alpha \log L \,.$$
\end{thm}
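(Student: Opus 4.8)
The plan is to deduce Theorem~\ref{thm:strongCoupling} from the abstract result Theorem~\ref{thm:main}; the only real work is to verify hypotheses (F1) and (F2) for this particular family. First I would write $f_L = L \psi + \varphi$ as in \eqref{eq:fLform}, with $\varphi(x_1, x_2) = (2 x_1, 2 x_2)$ and
\[
\psi(x_1, x_2) = \big( \sin 2 \pi x_1 + \sin 2 \pi (x_2 - x_1), \ \sin 2 \pi x_2 + \sin 2 \pi (x_1 - x_2) \big) \, .
\]
Condition (F1) is immediate, since all entries of $D_x f_L$ are $O(L)$. For (F2) I would invoke Lemma~\ref{lem:F2Generic}, which reduces (F2) --- with exponent $c_\beta = 1 - \beta$ --- to the transversality condition \eqref{eq:transConditionIntro} for $\psi$. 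Granting this, the hypotheses $1 - \beta - \delta > 0$ and $M \leq L^{\frac{1}{2} \beta L^{1 - \beta - \delta}}$ are exactly $\delta \in (0, c_\beta)$ and $M \leq L^{\frac{1}{2} \beta L^{c_\beta - \delta}}$, so Theorem~\ref{thm:main} applies verbatim and produces the claimed estimates on the $\lambda_i$.

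The heart of the argument is therefore to show that $\det D_x \psi$ has no critical point on its zero set. The key simplification I would use is the passage to ``symmetric'' coordinates $u = \pi(x_1 + x_2)$, $v = \pi(x_2 - x_1)$: a short computation using product-to-sum identities gives
\[
\det D_x \psi = (2 \pi)^2 \big( \cos^2 u - \sin^2 v - 2 \cos u \cos v \cos 2 v \big) =: (2 \pi)^2 \, g(u, v) \, ,
\]
so that \eqref{eq:transConditionIntro} becomes the statement that $g$ and $\nabla g$ have no common zero.

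To finish, I would compute $\partial_u g = -2 \sin u \, (\cos u - \cos v \cos 2 v)$ and $\partial_v g = 2 \sin v \, \big( \cos u \, (6 \cos^2 v - 1) - \cos v \big)$, and split on $\partial_u g = 0$. If $\sin u = 0$, i.e.\ $\cos u = \pm 1$, then both $g$ and $\partial_v g$ reduce to explicit polynomials in $w := \cos v$ --- for $\cos u = 1$ one gets $g = w \big( w - 2 (2 w^2 - 1) \big)$ and $\partial_v g = 2 \sin v \, (3 w + 1)(2 w - 1)$, and symmetrically for $\cos u = -1$ --- whose zero sets in $[-1, 1]$ are readily checked to be disjoint. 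If instead $\cos u = \cos v \cos 2 v$, substituting into $g$ yields $g = -(\cos v \cos 2 v)^2 - \sin^2 v \leq 0$, with equality forcing $\sin v = 0$ and $\cos v \cos 2 v = 0$ at once, which is impossible. Hence $g$ and $\nabla g$ never vanish simultaneously, establishing \eqref{eq:transConditionIntro}.

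I expect the main obstacle to be precisely this last verification: although elementary, the trigonometry is fiddly, and it is the choice of symmetric coordinates $(u, v)$ --- which effectively decouples the two ``directions'' of the problem --- that keeps the resulting system of equations manageable. Everything else is routine, since the quantitative hypotheses of Theorem~\ref{thm:strongCoupling} were chosen to match those of Theorem~\ref{thm:main} once $c_\beta = 1 - \beta$ is known.
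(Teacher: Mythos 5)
Your proof is correct, and it follows the same high-level strategy as the paper (reduce to Lemma~\ref{lem:F2Generic}, then verify the transversality condition \eqref{eq:transConditionIntro} for the given $\psi$ by hand), but the verification itself takes a genuinely different route. The paper works directly in the original coordinates: it writes out $\det D_x\psi=0$ and the two components of $\nabla\det D_x\psi=0$ as a system of three trigonometric equations, adds the last two, solves the resulting pair for $\cos(x_1-x_2)$, and eliminates to arrive at $(\sin x_1+\sin x_2)(1-\sin x_1\sin x_2)=0$, after which it rules out each factor. You instead pass to the symmetric variables $u=\pi(x_1+x_2)$, $v=\pi(x_2-x_1)$, under which $\det D_x\psi=(2\pi)^2\,g(u,v)$ with $g=\cos^2u-\sin^2v-2\cos u\cos v\cos 2v$; the two components of $\nabla g$ then factor cleanly, and the case split $\sin u=0$ versus $\cos u=\cos v\cos 2v$ reduces the whole problem to disjointness of the zero sets of two univariate polynomials in $w=\cos v$ (resp.\ to a manifest sign obstruction). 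This is the same kind of elementary trigonometric computation, but the symmetric coordinates make the gradient factor in a way the original coordinates do not, so the case analysis is shorter and each step is a mechanical polynomial check rather than an ad hoc elimination. Your computation is self-consistent and checks out; I also note that the paper's displayed equation \eqref{eq1} appears to carry a sign typo ($\det D_x\psi=\cos x_1\cos x_2-(\cos x_1+\cos x_2)\cos(x_1-x_2)$, with a minus sign, as both your $g$ and a direct evaluation at $x_1=x_2=0$ confirm), a discrepancy that propagates consistently through \eqref{eq2}--\eqref{eq4} but does not affect the conclusion. One small remark: your expressions for $g$ and $\partial_v g$ in the case $\cos u=-1$ are indeed the sign-symmetric counterparts ($g=w(4w^2+w-2)$, $\partial_v g=-2\sin v\,(3w-1)(2w+1)$), and the root sets again fail to intersect, so the ``symmetrically'' you invoke does hold.
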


\begin{rmk}
The bulk of the work in applying Theorem \ref{thm:main} to the coupled standard
maps in Theorems \ref{ThmStd}, \ref{thm:strongCoupling} is to verify the (F2) condition.
 Although Theorem \ref{thm:strongCoupling} has only been checked in the case
$N = 2$, the conditions used to derive it should be checkable for any reasonable
value of $N$. See Section \ref{sec:app} for more discussion. 
\end{rmk}
\begin{rmk}

We emphasize the order of quantifiers in our results: throughout, we fix the dimension $N$ and choose $L$ sufficiently large depending on $N$, but not vice versa. It would, though, be of interest to fix $L$, take the `hydrodynamic limit' $N \to \infty$ and study the resulting Lyapunov spectrum. Limits of this kind provide toy models for, e.g., gases of particles in the hydrodynamic limit; 
see, e.g., \cite{yang2009lyapunov, yang2006dynamical}. 
However, this is beyond the scope of the present paper, since our analysis here does not take into account quantitative dependence in $N$.
\end{rmk}

\subsection{Comments and comparison with prior work}\label{subsec:commentsAndComparison}

This paper is inspired by the approach in \cite{BXY1}, which studied Lyapunov exponents 
for the random compositions $F^n_\uo = F_{\omega_n} \circ \cdots \circ F_{\omega_1} : \T^2 \circlearrowleft$, where
\begin{gather*}
F(x,y) = (2 x + L \sin(2 \pi x) - y, x) \, , \quad F_\omega(x,y) = F\circ R_\omega(x,y) \, ,\\
R_\omega(x,y) := (x  + \omega, y) \, , 
\end{gather*}
and the random perturbations $\omega_i$ are IID uniformly distributed in $[- \e, \e]$, where
$\e > 0$ is a small parameter. 
Lyapunov exponents are estimated by considering the Markov chain $(Z_n, V_n) \in \T^2 \times S^1$,
\begin{align}\label{zvprocess}
Z_n = F^n_\uo(Z_0) \, , \quad V_n = \frac{D_{Z_0} F^n_\uo(V_0)}{\| D_{Z_0} F^n_\uo(V_0) \|} \, ,
\end{align}
using the well-known fact that stationary measures $\nu^\e$ for $(Z_n, V_n)$ are related to Lyapunov exponents by the formula 
\[
\lambda_1 \geq \E \int_{\T^2 \times S^1} \log \| D_z F_\omega(v)\| d \nu^\e(z, v) \, ,
\]
where $\lambda_1$ is the top Lyapunov exponent for $(F^n_\uo)$ (see Kifer \cite{kifer1982perturbations}).

What is shown in \cite{BXY1} is that $\nu^\e$ mass is largely concentrated
away from contracting directions roughly parallel to $\frac{\pd}{\pd y}$, resulting in a lower
bound $\lambda_1 \gtrsim \log L$. 
Estimates on $\nu^\e$ itself are derived
by combining: 
\begin{itemize}
\item[(i)] strong hyperbolic expansion on a large (but noninvariant) 
subset of phase space with 
\item[(ii)] a priori upper bounds on the 
transition kernel for $(Z_n, V_n)$, which result in a priori estimates for the density of $\nu^\e$ (Lemma 9 in \cite{BXY1}).
\end{itemize}

The present paper applies these ideas to the Markov chain $(Z_n, E_n)$ keeping track of a base point $Z_n \in \T^{2N}$ and an $N$-dimensional subspace $E_n \subset T_{Z_n} \T^{2N}$ 
of tangent directions. Extending item (i) above, 
the natural separation of the degrees of freedom of $\T^{2N}$ into the `expanding' $(x_1, \cdots, x_N)$ and `contracting' $(y_1, \cdots, y_N)$ ensures strong hyperbolic expansion along $N$-dimensional subspaces roughly parallel
to $\operatorname{Span}\{ \frac{\pd}{\pd x_1}, \cdots, \frac{\pd}{\pd x_N}\}$. 
Generalizing item (ii) above, control on the transition kernel for $(Z_n, E_n)$ is provided by 
assumption (ND), which immediately implies non-concentration 
for stationary measures for $(Z_n, E_n)$ (see Lemma \ref{lem:aPrioriBoundND}). 
 In particular, the assumption precludes the concentration of stationary mass in a neighborhood of the contracting direction $\R^y$, which is the main obstruction to having positive or large Lyapunov exponents. 
 
 Our higher dimensional setting entails several challenges, the most obvious of which is that the relevant tangent space dynamics is much more complicated. To illustrate this point, note that 
 the Markov chain $(Z_n, V_n)$ considered in \cite{BXY1} lives on the 3D space $\T^2 \times S^1$, while in our setting $(Z_n, E_n)$ lives on the $(2N + N^2)$-dimensional space $\T^{2N} \times \Gr_N(\R^{2N})$. 
 In \cite{BXY1}, hyperbolicity as in (i) above means that $V_n$ is repelled from 
 $\R^y$ and drawn towards $\R^x$ as long as $(Z_n = (X_n, Y_n))$ avoids the critical strips $\{ x = \frac14, \frac34\}$. This picture is much more complicated in our setting: 
although $E_n$ is still repelled from $\R^y$ and attracted by $\R^x$, it also experiences 
the influence of a vast heteroclinic network of saddle-type behavior 
near `hybrid' subspaces of the form
$\operatorname{Span}\{ \frac{\pd}{\pd x_{i_1}}, \cdots, \frac{\pd}{\pd x_{i_l}}, \frac{\pd}{\pd y_{j_1}}, \cdots, \frac{\pd}{\pd y_{j_{N-l}}} \}$, where $1 \leq i_1 < \cdots < i_l \leq N, 1 \leq j_1 < \cdots < j_{N-l} \leq N$ are arbitrary indices. 

\subsubsection*{Comments on nondegeneracy assumption (ND)}

For $N = 1$, note that the randomization $R_\omega(x,y) = (x + \omega, y)$ forces only the $x$-coordinate, and so condition (ND) cannot possibly hold for this noise model. 
From the perspective of the $(Z_n, V_n)$ Markov chain as in \eqref{zvprocess}, 
this noise is `degenerate', and
only propagates to noisy forcing of all three degrees of freedom of $(Z_n, V_n)$ after three iterates.
This is made formal in Lemma 9 of \cite{BXY1}, which 
 bounds the density of the time-3 transition kernel
\[
\hat P^3((z, v) , \cdot) = \P\left( \big( F^3_\uo(z), \frac{D_z F^3_\uo(v)}{\|D_z F^3_\uo(v)\|} \big) \in \cdot \right) \, .
\] 
In contrast, (ND) 
ensures that the noise $R_\omega$ acts `nondegenerately' on $(Z_n, E_n)$. 
Drawing an analogy with continuous-time stochastic differential equations, 
forcing of the type $R_\omega(x,y) = (x + \omega, y)$ is `hypoelliptic', whereas
 (ND) ensures that $R_\omega$ provides `elliptic'-type forcing for $(Z_n, E_n)$.

\begin{rmk}
It is probably possible to extend the analysis in this manuscript to 
`hypoelliptic'-type noise of the form $R_\omega(x,y) = (x + \omega, y)$.
However, the dimension of the Grassmanian manifold of $N$-dimensional subspaces
$\Gr_N(\R^{2N})$ of $\R^{2N}$ grows like $N^2$ as $N$ gets large, which is much larger than the dimension of the base manifold. Thus, `propagating' noise from the base dynamics to the entire Grassmannian bundle involves estimating time-$N$ transition kernels, which is 
computationally quite involved. We leave this problem for future work. 
\end{rmk}


\subsubsection*{Additional related prior work}

Related to our setting is the work of Berger and Carrasco \cite{Berge14}, who considered the Lyapunov exponents of a skew product of a hyperbolic CAT map with a Chirikov standard map. This was generalized recently by Carrasco \cite{Carrasco19} to estimate the Lyapunov exponents of arbitrarily many coupled standard maps. 
Applying a symbolic coding to the CAT map, one can view the models in \cite{Berge14, Carrasco19} as random perturbations by discrete noise (by comparison, \cite{BXY1} and this paper both use the absolutely continuous noise). 

However, we emphasize that both the models considered and the techniques used in 
\cite{Berge14, Carrasco19} are highly different from our setting. 
The most significant difference is that the perturbations applied to the standard map
 in \cite{Berge14, Carrasco19} are necessarily of order 1, and so the perturbed and unperturbed mappings have completely different dynamics even after 1 timestep. In contrast, 
the perturbations in  \cite{BXY1} and this paper may be extremely small, so that the perturbed
and unperturbed mappings are close even after many timesteps. This ``largeness'' of the perturbation in \cite{Berge14, Carrasco19} is inherent to the methods used: 
the skew products considered are set up 
so that strong expansion from the CAT map dominates the dynamics of the standard map. Thus, their model admits a global foliation by strongly-unstable manifolds 
which cross the entire domain of the standard map in a uniform way- this strong
geometric property is their primary tool for estimating Lyapunov exponents. 

Farther from our work, there is a wealth of literature on Lyapunov exponents. 
We mention, for instance, Furstenberg's famous
1963 paper \cite{furstenberg1963noncommuting} on positivity of Lyapunov exponents for IID products of determinant 1 matrices, 
and the vigorous activity that followed extending this work to random products of matrices
driven by more general processes (e.g., \cite{virtser1980products, guivarc50products}) and to simplicity of the Lyapunov spectrum (e.g., \cite{gol1989lyapunov}). We emphasize, though, that these works are qualitative and a priori provide no concrete estimates of Lyapunov exponents. 

We have only emphasized here works which directly address nonuniform hyperbolicity (in the presence of cone twisting) only in high-dimensional systems. For a broader discussion, we refer the reader to the introduction of \cite{BXY1}. 

\subsubsection*{Organization of the paper}

In Section \ref{SPrelim}, we give some preliminary results on the Markov chain on $\T^{2N}$ and on $\Gr_N(\T^{2N})$, while in Sections \ref{SThmMain} and \ref{SProp} we prove
Theorem \ref{thm:main}. Theorems \ref{ThmStd} and \ref{thm:strongCoupling} are
proved in Section \ref{sec:app}. 
Sufficient conditions for (F2) and genericity results, as well as applications to coupled
standard maps, are worked out in Section \ref{sec:app}. 
In Section \ref{sec:noiseModel} we construct an explicit
example of a noise model $R_\omega$ satisfying conditions (E), (C) and (ND).
Included in Appendix \ref{sec:SVDapp} is a version of the standard 
singular value decomposition used in this paper.

\section{Preliminaries}\label{SPrelim}
\subsection{The Grassmanian as a Riemannian manifold}

 Fix $m \geq 1$ and $1 \leq k < m$. Here we describe the smooth and Riemannian structures 
 of the manifold $\Gr_k(\R^m)$ of 
$k$-dimenisonal subspaces of $\R^m$, and give a few preliminary lemmas. 
The following is all well-known; see, e.g., \cite{nicolaescu2007lectures, piccione2000geometry}.

To fix ideas and avoid 
dealing with unnecessary cases, we will exclusively deal with the case when $k \leq \frac{m}{2}$, 
hence $k \leq m - k$. Otherwise, we can reduce to this case by noting that orthogonal projection
provides a natural identification $\Gr_k(\R^m) \cong \Gr_{m-k} ( \R^m)$. 
Throughout, $\R^m$ carries the standard Euclidean inner product
$\langle \cdot ,\cdot \rangle$.

Given $E \in \Gr_k(\R^m)$, we define the coordinate patch $\Uc_E = \{ \graph_E H : H \in L(E, E^\perp)\}$,
where we write $L(E, E^{\perp})$ for the space of linear maps from $E$ to $E^{\perp}$, and
the chart map
\[
\graph_E : \Uc_E \to \Gr_k(\R^m)
\]
is defined by $\graph_E H = \{ v + H(v) : v \in E\}$.

We highlight the following facts: 
\begin{itemize}
\item[(A)] We have that $\Uc_E$ is the set of all $E' \in \Gr_k(\R^m)$ intersecting $E^\perp$ transvsersally. 
In particular, $\Uc_E$ is open and dense for any $E \in\Gr_k(\R^m)$.

\item[(B)] We have the following basis-independent identification: 
\[
T_E \Gr_k(\R^m) = L(E, E^\perp) \, .
\]
If bases for $E, E^\perp$ are fixed, then 
we have the parametrization $\Uc_E \cong M_{m-k, k}(\R)$, the space of $(m-k) \times k$ 
real matrices.
\end{itemize}


With respect to the identification above, the Riemannian metric $g$ on $T_E \Gr_k (\R^m)$ can be expressed as 
\[
g_E(H_1, H_2) = \Tr_E(H_2^\top H_1) \, , 
\]
where $\Tr_E$ denotes the trace induced by the inner product $\langle \cdot, \cdot \rangle$ on $\R^m$ restricted to $E$. 
Recall that the orthogonal group $O(m)$ acts on $M = \Gr_k(\R^m)$ via the action $E \mapsto U (E)$ for $U \in O(m)$. It is standard that the orthogonal group acts isometrically on $(M, g)$.
In fact, $(M, g)$ is the unique (up to scalar) 
Riemannian metric on $M$ with respect to which $O(m)$ acts
isometrically. 
As usual, the Riemannian metric induces a volume measure 
$\Leb_{\Gr_k(\R^m)}$ and a geodesic distance $d_{geo}$ between subspaces in $\Gr_k(\R^m)$.

\subsection{Markov chain formulations; Lyapunov exponents
from stationary measures}\label{subsec:relevMarkovGrass}

Our random maps system $\{F^n_{\uo}\}_{n \geq 1}$ can be seen as a 
time-homogeneous Markov chain $Z_n := \{(x_n, y_n)\}$ given by 
\[
(x_n, y_n) = F^n_{\uo}(x_0, y_0) = F_{\omega_n}(x_{n - 1}, y_{n - 1})= R_{\omega_n}\circ F(x_{n - 1}, y_{n - 1})\, .
\]
That is to say, for fixed $\e$, the {\it transition probability} starting from $(x,y) \in \mathbb T^2$ is
\[
P((x,y), A) = Q(F z, A)
\] for Borel $A \subset\T^{2N}$, where the kernel $Q$ for the
randomizations $R_\omega$ is defined as in condition (E) above.  We write $P^{(k)}((x,y), \cdot)$ (or $P^{(k)}_{(x,y)}$) for the corresponding $k$-step transition probability. It is easy to see that for this chain, a measure $\nu$ is \emph{stationary},
meaning for any Borel set $A \subset \mathbb T^{2N}$,
\[
\nu(A) = \int P((x,y), A) \, d\nu(x,y) \, . 
\]

Since $F_\omega$ is always volume-preserving, it follows immediately that 
$\Leb_{\T^{2N}}$ is a stationary measure for $(Z_n)$. Equivalently, $\Leb_{\T^{2N}}$
gives rise to the $\tau$-invariant measure $\P \times \Leb_{\T^{2N}}$ on $\Omega \times \T^{2N}$, 
where the dynamical system $\tau$ on $\P \times \Leb_{\T^{2N}}$ is defined through
\[
\tau(\uo, z) = (\theta \uo, F_{\omega_1} z) \, .
\]

Let $(Z_n, E_n)$ denote the Markov process on the Grassmanian 
bundle $\Gr_N(\T^{2N}) \cong \T^{2N} \times \Gr_N(\R^{2N})$
defined for initial $(Z_0, E_0) \in \Gr_N(\T^{2N})$ by
\begin{gather*}
Z_n = F^n_\uo(Z_0) \, , \\
E_n = D_{Z_0} F^n_\uo (E_0) \, .
\end{gather*}
This gives rise to an associated dynamical system $\hat \tau : \Omega \times \Gr_N(\T^{2N}) \to 
\Omega \times \Gr_N(\T^{2N})$ defined by $$\hat \tau(\uo, z, E) = (\theta \uo, F_{\omega_1} z, 
D_z F_{\omega_1}(E)).$$ Recall that a measure of the form $\P \times \nu$ is $\hat \tau$-invariant iff
$\nu$ on $\Gr_N(\T^{2N})$ is stationary for $(Z_n, E_n)$, and ergodic iff $\nu$ is an ergodic 
stationary measure.

\begin{lem}
Assume condition $(E)$. Then, Lebesgue measure $\Leb_{\T^{2N}}$ is the unique, hence ergodic, stationary
measure for the Markov chain $(Z_n)$. 
\end{lem}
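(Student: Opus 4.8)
\emph{Proof proposal.} The plan is to show that \emph{every} stationary (probability) measure for the chain $(Z_n)$ equals $\Leb_{\T^{2N}}$; ergodicity then follows for free, since a non-ergodic stationary measure would be a non-trivial convex combination of two distinct stationary measures. Two inputs are immediate: $\Leb_{\T^{2N}}$ is stationary because each $F_\omega$ is volume-preserving (as already observed above); and, by condition (E), the one-step kernel $P((x,y),\cdot) = Q(F(x,y),\cdot)$ is absolutely continuous with respect to $\Leb_{\T^{2N}}$ for \emph{every} $(x,y)$, so any stationary $\nu$ satisfies $\nu = \nu P \ll \Leb_{\T^{2N}}$. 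Write $\nu = h\,\Leb_{\T^{2N}}$ and let $A = \{h>0\}$, defined up to $\Leb$-null sets.

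The crux is to prove $\Leb_{\T^{2N}}(A)=1$, i.e.\ $h>0$ a.e. Testing the stationarity identity $\nu(B)=\int P(z,B)\,d\nu(z)$ with $B=A^c$ gives $\int_A P(z,A^c)\,h(z)\,d\Leb(z)=0$, hence $Q(Fz,A^c)=0$ for $\Leb$-a.e.\ $z\in A$; since by (E) the density $q(Fz,\cdot)$ is strictly positive on $B_c(Fz)$, this forces $\Leb\big(B_c(Fz)\setminus A\big)=0$ for a.e.\ $z\in A$. A Fubini argument, using that $F$ is a volume-preserving bijection, upgrades this to $\Leb\big(N_c(F(A))\setminus A\big)=0$, where $N_c(S):=\{w:\dist(w,S)<c\}$; since $F(A)\subseteq N_c(F(A))$ and $\Leb(F(A))=\Leb(A)$, one may replace $A$ by $F(A)$ and conclude that, after discarding a null set, $A$ agrees mod $\Leb$ with its own $c$-neighborhood. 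Finally, a connectedness argument finishes this step: any measurable $A$ with $\Leb(A)>0$ and $\Leb(N_c(A)\setminus A)=0$ is conull, because the set $U:=\{w:\Leb(A\cap B_{c/2}(w))>0\}$ is open, agrees with $A$ mod $\Leb$ (it contains all density points of $A$ and is contained in $N_c(A)$), and the resulting relation $\Leb(N_c(U)\setminus U)=0$ for the \emph{open} set $U$ forces $N_c(U)=U$, so $U$ is clopen, hence $U=\T^{2N}$ by connectedness of $\T^{2N}$. Thus $h>0$ $\Leb$-a.e.

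For uniqueness, let $\nu_1=h_1\Leb_{\T^{2N}}$ and $\nu_2=h_2\Leb_{\T^{2N}}$ be stationary probability measures and set $\rho:=\nu_1-\nu_2$, a signed measure of total mass $0$ with $\rho P=\rho$. A standard manipulation with the Jordan decomposition (using that $P$ preserves positivity and total mass, so that the minimality of the Jordan parts gives $\rho^{\pm}\le \rho^{\pm}P$ and equality of masses forces equality) shows that $\rho^+=(h_1-h_2)_+\Leb_{\T^{2N}}$ and $\rho^-=(h_1-h_2)_-\Leb_{\T^{2N}}$ are \emph{themselves} stationary and mutually singular. If $\nu_1\neq\nu_2$ then $\rho\neq 0$, so $\rho^+$ is a nonzero finite stationary measure; normalizing it yields a stationary probability measure whose density is a positive multiple of $(h_1-h_2)_+$, which vanishes on $\{h_1<h_2\}$, a set of positive $\Leb$-measure (its complement's positivity follows from $\int(h_1-h_2)=0$). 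This contradicts the previous paragraph. Hence $\nu_1=\nu_2$; since $\Leb_{\T^{2N}}$ is stationary, it is the unique stationary measure, and therefore ergodic.

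The main obstacle is the second paragraph, and within it the passage from the local statement ``$B_c(Fz)\subseteq A$ mod $0$ for a.e.\ $z\in A$'' to the global conclusion $\Leb(A)=1$: this is the only place where the topology of $\T^{2N}$ (connectedness) is used, and it requires care to handle the null-set ambiguities in $A$ — both in the Fubini step that produces the $F$-equivariant $c$-saturation property and in the clopen argument that then trivializes $A$. Everything else (absolute continuity of stationary measures, and the Jordan-decomposition step in the last paragraph) is routine.
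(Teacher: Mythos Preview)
Your proof is correct. Both your argument and the paper's rest on the same two pillars---absolute continuity of any stationary measure (from the kernel bound in (E)) and connectedness of $\T^{2N}$---but they assemble these differently. The paper invokes the ergodic decomposition of stationary measures (citing Kifer): since every stationary measure is absolutely continuous there are at most countably many ergodic components, and condition~(E) forces sufficiently close points to lie in the same ergodic component, so by connectedness there is only one. You instead prove directly that \emph{any} stationary measure has a.e.-positive density (via the clopen argument with the auxiliary open set $U$), and then deduce uniqueness from the Jordan decomposition of the difference of two stationary measures. Your route is more self-contained---it avoids the ergodic decomposition machinery---at the cost of the null-set bookkeeping you flagged. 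One small correction: the claim ``$\Leb(N_c(U)\setminus U)=0$ for open $U$ forces $N_c(U)=U$'' is not literally true as stated (the set $N_c(U)\setminus U$ need not be open), but your argument goes through cleanly if you instead show directly that $U^c$ is open: for any $v\in U^c$ one has $\Leb(B_{c/2}(v)\cap A)=0$ by definition of $U$, hence $\Leb(B_{c/2}(v)\cap U)=0$ since $U=A$ mod~$\Leb$, hence $B_{c/2}(v)\cap U=\emptyset$ because $U$ is open. This makes $U$ clopen and the connectedness step then finishes exactly as you intended.
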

\begin{proof}The proof is similar to  Lemma 5 of \cite{BXY1}. Condition (E) implies the following:
\begin{itemize}
\item Every stationary measure for $(Z_n)$ is absolutely continuous w.r.t. Lebesgue measure. 
In particular, there are at most countably many distinct ergodic stationary measures, and 
giving rise to an at-most countable ergodic decomposition of $\T^{2N}$ (see, e.g., Kifer \cite{kifer1982perturbations})
\item Sufficiently close nearby points belong to the same ergodic component. 
\end{itemize}
It follows that there is exactly one ergodic component, which must coincide with Lebesgue
measure. 
\end{proof}

We are now in position to prove Lemma \ref{lem:existLE}, namely, that 
Lyapunov exponents $\lambda_i = \lim_n \frac{1}{n} \sigma_i(D_z F^n_\uo)$
exist and are constant for a.e. $z \in \T^{2N}$ and a.e. random sample $\uo$. 

\begin{proof}[Proof of Lemma \ref{lem:existLE}]
Recall that a measure of the form $\P \times m$ is $\tau$-invariant iff $m$ is stationary for
the Markov chain $(Z_n)$, and that $\P \times m$ is ergodic iff $m$ is an ergodic stationary measure (see Kifer \cite{kifer1982perturbations}).
Since $m = \Leb_{\T^{2N}}$ is ergodic, so is $\P \times m$. By the multiplicative ergodic theorem, 
it follows that Lyapunov exponents for the linear cocycle $D_z F_\uo^n$ over $\tau$ on $\Omega \times \T^{2N}$ are almost-surely constant with probability 1 for Leb-almost every $z \in \T^{2N}$. 
\end{proof}

The following relates stationary measures $\nu$ for the $(Z_n, E_n)$ process to Lyapunov exponents. 
\begin{lem}\label{lem:measureToLE}
Assume condition $($E$)$. 
Let $\nu$ be any stationary measure for $(Z_n, E_n)$ projecting to Lebesgue measure on $\T^{2N}$. Then, 
\begin{align}\label{eq:compareLEStatMeas}
\sum_{i = 1}^N \lambda_i \geq \E \int \log  \det(D_z F_\omega|_{E})  \, d \nu(z, E) \, .
\end{align}

\end{lem}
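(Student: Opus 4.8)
The plan is to realize the left-hand side of \eqref{eq:compareLEStatMeas} as the Birkhoff average of the additive cocycle $(\uo, z, E) \mapsto \log \det(D_z F_{\omega_1}|_E)$ over the skew-product $\hat\tau$, and then apply the Birkhoff ergodic theorem together with the characterization of $\sum_{i=1}^N \lambda_i$ via singular values. First I would record the telescoping identity: for the $N$-plane $E_n = D_{Z_0} F^n_\uo(E_0)$, the chain rule gives
\[
\log \det\big(D_{Z_0} F^n_\uo \big|_{E_0}\big) = \sum_{k=0}^{n-1} \log \det\big(D_{Z_k} F_{\omega_{k+1}} \big|_{E_k}\big),
\]
where on the left $\det(D_{Z_0}F^n_\uo|_{E_0})$ denotes the Jacobian of the restriction of the linear map $D_{Z_0}F^n_\uo$ to the $N$-dimensional subspace $E_0$ (i.e.\ the factor by which $N$-dimensional volume inside $E_0$ is expanded). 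The summand on the right is exactly $\log\det(D_z F_\omega|_E)$ evaluated along the orbit $(\theta^k\uo, Z_k, E_k) = \hat\tau^k(\uo, Z_0, E_0)$, so the left side is a Birkhoff sum for the observable $\phi(\uo, z, E) := \log\det(D_z F_{\omega_1}|_E)$ over the measure-preserving system $(\hat\tau, \P\times\nu)$.

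Next I would check integrability of $\phi$: condition (C) gives $\|D_z R_\omega\|, \|(D_z R_\omega)^{-1}\| \le 2$, and $F = F_L$ is a fixed smooth diffeomorphism of the compact manifold $\T^{2N}$, so $\|D_z F_\omega\|$ and $\|(D_z F_\omega)^{-1}\|$ are bounded above uniformly in $\omega$ and $z$; hence $|\log\det(D_z F_\omega|_E)| \le N \log(\text{const})$ is bounded, so $\phi \in L^\infty(\P\times\nu) \subset L^1$. Applying the Birkhoff ergodic theorem to $\phi$ over $(\hat\tau, \P\times\nu)$ — without needing ergodicity of $\nu$ — yields that
\[
\frac1n \log \det\big(D_{Z_0} F^n_\uo\big|_{E_0}\big) = \frac1n \sum_{k=0}^{n-1} \phi\big(\hat\tau^k(\uo, Z_0, E_0)\big) \longrightarrow \phi^*(\uo, Z_0, E_0)
\]
for $\P\times\nu$-a.e.\ $(\uo, Z_0, E_0)$, where $\phi^*$ is $\hat\tau$-invariant with $\int \phi^* \, d(\P\times\nu) = \int \phi \, d(\P\times\nu) = \E\int \log\det(D_z F_\omega|_E)\, d\nu(z,E)$.

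Finally I would compare $\phi^*$ with $\sum_{i=1}^N \lambda_i$. The Jacobian of $D_{Z_0}F^n_\uo$ restricted to the $N$-plane $E_0$ is at most the product of the $N$ largest singular values of $D_{Z_0}F^n_\uo$, i.e.\ $\det(D_{Z_0}F^n_\uo|_{E_0}) \le \prod_{i=1}^N \sigma_i(D_{Z_0}F^n_\uo)$ (this is the standard fact that the $N$-dimensional volume expansion of any $N$-plane is bounded by that of the optimal $N$-plane, the span of the top $N$ singular directions; see Appendix \ref{sec:SVDapp}). Since $\nu$ projects to $\Leb_{\T^{2N}}$, Lemma \ref{lem:existLE} applies to $\Leb_{\T^{2N}}$-a.e.\ $Z_0$, giving $\frac1n \log \sigma_i(D_{Z_0}F^n_\uo) \to \lambda_i$ a.s.; taking $\frac1n\log$ of the singular-value inequality and letting $n\to\infty$ gives $\phi^* \le \sum_{i=1}^N \lambda_i$ for $\P\times\nu$-a.e.\ point. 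Integrating this inequality against $\P\times\nu$ — and using that $\sum_{i=1}^N\lambda_i$ is a deterministic constant — yields
\[
\E\int \log\det(D_z F_\omega|_E)\, d\nu(z,E) = \int \phi^* \, d(\P\times\nu) \le \sum_{i=1}^N \lambda_i,
\]
which is \eqref{eq:compareLEStatMeas}.

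The main obstacle — really the only non-bookkeeping point — is justifying the singular-value inequality $\det(D_z F^n_\uo|_{E_0}) \le \prod_{i=1}^N \sigma_i(D_z F^n_\uo)$ and its passage to the limit, i.e.\ making sure the "optimal $N$-plane" interpretation of $\sigma_1\cdots\sigma_N$ is applied correctly; this is exactly the content encapsulated in the singular value decomposition lemma of Appendix \ref{sec:SVDapp}, so it should be invoked rather than reproven. One should also be slightly careful that the a.e.-statement from Lemma \ref{lem:existLE} (which holds for $\P\times\Leb_{\T^{2N}}$-a.e.\ $(\uo,z)$) can be combined with the $\P\times\nu$-a.e.\ statement from Birkhoff: this is fine because $\nu$ projects to $\Leb_{\T^{2N}}$, so a $\Leb_{\T^{2N}}$-full-measure set of base points $z$ is also $\nu$-full-measure in the base, and a Fubini argument then gives a $\P\times\nu$-full-measure set on which both convergences hold.
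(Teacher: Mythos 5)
Your proof is correct and follows the same outline as the paper's: interpret $\log\det(D_z F_\omega|_E)$ as an observable over the skew-product $\hat\tau$, apply the Birkhoff ergodic theorem to get the asymptotic volume-growth rate along $E_n$, bound $\det(D_z F^n_\uo|_E)$ by the product of the top $N$ singular values, and invoke Lemma~\ref{lem:existLE} together with the fact that $\nu$ projects to $\Leb_{\T^{2N}}$. The one structural difference is minor but worth noting: the paper reduces without loss of generality to the case where $\nu$ is $\hat\tau$-ergodic (a reduction that implicitly uses that $\Leb_{\T^{2N}}$ is the \emph{unique} stationary measure on the base, so that every ergodic component of $\nu$ still projects to $\Leb_{\T^{2N}}$), whereupon Birkhoff gives the time average as the deterministic constant $\int\phi\,d(\P\times\nu)$ directly. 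You instead invoke the non-ergodic Birkhoff theorem, obtaining convergence to an invariant function $\phi^*$, and integrate at the end; this sidesteps the ergodic-decomposition step at the cost of carrying $\phi^*$ through the argument. Both routes are valid, and your Fubini-type observation about transferring the $\P\times\Leb_{\T^{2N}}$-a.e.\ statement of Lemma~\ref{lem:existLE} to a $\P\times\nu$-a.e.\ statement is exactly the right point of care.
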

Above, for a $2N \times 2N$ matrix $A$ and $E \subset \R^{2N}, \dim E = N$, we write
$A|_E : E \to A(E)$ for the linear mapping of $E$ to $A(E)$ obtained by restricting $A$ to $E$. 
From this standpoint, $\det(A|_E)$ is defined as usual, e.g., as the volume ratio
\[
\det(A|_E) := \frac{\Leb_{A(E)} A(B_E)}{\Leb_E(B_E)} \, , 
\]
where $B_E \subset E$ is the unit ball, and $\Leb_E$ denotes Lebesgue measure on $E$. 

\begin{proof}
Without loss, we may assume $\nu$ is ergodic, hence $\P \times \nu$ is $\hat \tau$-ergodic. By the Birkhoff ergodic theorem applied to $\hat \tau$, we have 
\[
\int \log  \det(D_z F_\omega|_{E})  \, d \nu(z, E) = \lim_{n \to \infty} \frac{1}{n} \sum_{i = 0}^{n-1} \varphi \circ \hat \tau^i(\uo, z, E) =  \lim_{n \to \infty} \frac{1}{n} \log  \det(D_z F_{\uo}^n|_E)  
\]
for $\P \times \nu$-almost every $(\uo, z, E)$, where 
$
\varphi(\uo, z, E) := \log  \det(D_z F_{\omega_1}|_E) \, .
$
Recall that $\det (D_z F_\uo^n|_E)  \leq \prod_{i = 1}^N \sigma_i(D_z F^n_\uo)$. By Lemma \ref{lem:existLE}, we conclude
\[
 \int \log \det(D_z F_\omega|_{E})  \, d \nu(z, E) \leq \limsup_{n \to \infty} \frac{1}{n} \sum_{i = 1}^N \log \sigma_i(D_z F^n_\uo) = \sum_{i =1}^N \lambda_i \, . \qedhere
\]
\end{proof}

\begin{rmk}
In fact, equality holds in \eqref{eq:compareLEStatMeas} if 
$\nu \ll \mathfrak m$, the Riemannian volume on $\Gr_N(\T^{2N})$. When $\lambda_N > \lambda_{N + 1}$, this follows from: by the Multiplicity Ergodic Theorem,  
$\lim_n \frac{1}{n} \log \det(D_z F^n_\uo |_E) = \sum_1^N \lambda_i$
for a.e. $z \in \T^{2N}, \uo \in \Omega$ and for all $E$ transversal to the $N$-plane 
\[
E^{N + 1}_{(\uo, z)} := \{ v \in \R^{2N} : \lim_n \frac{1}{n} \log \| D_z F^n_\uo (v) \| \leq \lambda_{N + 1} \} \, .
\]
\end{rmk}

A key component of our analysis is the use of the nondegeneracy condition (ND) 
to provide a priori control on the density of stationary measures $\nu$ for the Grassmanian Markov chain $(Z_n, E_n)$. 
The following is an immediate consequence of (ND). 
\begin{lem}\label{lem:aPrioriBoundND}
Let $\nu$ be any stationary measure for $(Z_n, E_n)$. Then, $\nu \ll \mathfrak m$, where $\mathfrak m$ is the Riemannian
volume on $\Gr_N(\T^{2N})$, and satisfies
\[
\left\| \frac{d \nu}{d \mathfrak m} \right\|_{L^\infty} \leq M 
\]
where all notation is as in condition (ND).
\end{lem}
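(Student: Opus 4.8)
The plan is to derive the stated $L^\infty$-bound directly from the stationarity equation for $\nu$ together with the absolute-continuity and density bound furnished by (ND). The key point is that (ND) says each one-step transition kernel $\hat Q((z,E),\cdot)$ on $\Gr_N(\T^{2N})$ is absolutely continuous with respect to $\mathfrak m$, with density $\hat q((z,E),\cdot)$ bounded in $L^\infty$ uniformly by $M$; the one-step transition for the full chain $(Z_n,E_n)$ is $\hat P((z,E),\cdot)=\hat Q((F_L z, D_zF_L(E)),\cdot)$, since $F_\omega=R_\omega\circ F_L$ and $F_L$ is deterministic. So $\hat P((z,E),\cdot)$ is itself absolutely continuous w.r.t. $\mathfrak m$ with density $\hat p((z,E),\cdot):=\hat q((F_Lz,D_zF_L(E)),\cdot)$ satisfying $\|\hat p((z,E),\cdot)\|_{L^\infty}\le M$ for every $(z,E)$.

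First I would write out what stationarity means: for every Borel $A\subset\Gr_N(\T^{2N})$,
\[
\nu(A)=\int \hat P((z,E),A)\,d\nu(z,E)=\int\!\!\int_A \hat p((z,E),E')\,d\mathfrak m(E')\,d\nu(z,E).
\]
Then I would like to interchange the order of integration (Tonelli, as the integrand is nonnegative and measurable on the product of $\sigma$-finite spaces) to obtain
\[
\nu(A)=\int_A\left(\int \hat p((z,E),E')\,d\nu(z,E)\right)d\mathfrak m(E').
\]
Since $A$ is arbitrary, this exhibits $\nu$ as absolutely continuous with respect to $\mathfrak m$, with Radon–Nikodym derivative
\[
\frac{d\nu}{d\mathfrak m}(E')=\int \hat p((z,E),E')\,d\nu(z,E)
\]
for $\mathfrak m$-a.e. $E'$. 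Bounding the integrand pointwise by $M$ and using that $\nu$ is a probability measure gives $\left\|\tfrac{d\nu}{d\mathfrak m}\right\|_{L^\infty}\le M$, which is exactly the claim.

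**The main thing to be careful about** is the measurability required to apply Tonelli: one needs the map $((z,E),E')\mapsto \hat p((z,E),E')$ to be jointly measurable on $\Gr_N(\T^{2N})\times\Gr_N(\T^{2N})$, not merely measurable in each variable. This is standard for the disintegration of a measurably-parametrized family of absolutely continuous measures — it follows, e.g., from the fact that $(z,E)\mapsto \hat Q((z,E),\cdot)$ is a measurable family of probability measures and $\hat Q((z,E),\cdot)\ll\mathfrak m$ uniformly, so a jointly measurable version of the density can be selected — and in any case (ND) should be read as providing such a version. Once joint measurability is in hand, everything else is the routine Fubini/Tonelli manipulation above; there is no real analytic obstacle, which is why the statement is phrased as "an immediate consequence of (ND)."
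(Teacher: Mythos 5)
Your proof is correct and follows the same essential route as the paper: identify the one-step kernel $\hat P((z,E),\cdot)=\hat Q((Fz,D_zF(E)),\cdot)$, observe that (ND) gives a uniform $L^\infty$-bound $M$ on its density, and combine this with stationarity. The one place where you take a slightly longer path is the Tonelli interchange: the paper avoids it (and the joint-measurability worry you flag) by bounding directly, for any Borel $K$,
\[
\nu(K)=\int \hat P((z,E),K)\,d\nu(z,E)\le \int M\,\mathfrak m(K)\,d\nu(z,E)=M\,\mathfrak m(K),
\]
using only that $\hat P((z,E),K)\le M\,\mathfrak m(K)$ pointwise in $(z,E)$. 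Since $\nu(K)\le M\,\mathfrak m(K)$ for all Borel $K$ is already equivalent (by Radon--Nikodym) to $\nu\ll\mathfrak m$ with $\|d\nu/d\mathfrak m\|_{L^\infty}\le M$, no interchange of integrals and no jointly measurable selection of $\hat p$ is required. Your argument is valid granted a jointly measurable version of the density, but the more elementary bound above makes that discussion unnecessary.
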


\begin{proof}
For $(z, E) \in \Gr_N(\T^{2N})$ and $K \subset \Gr_N(\T^{2N})$, define the transition kernels
\[
\hat P((z, E), K) = \P\big( (F_\omega z, D_z F_\omega(E)) \in K\big) = \P\big((Z_1, E_1) \in K | (Z_0, E_0) = (z, E) \big) \,,
\]
and note that
\[
\hat P((z, E), K) = \hat Q((F z, D_z F(E)), K)
\]
where $\hat Q$ is the kernel for $R_\omega$ as in \eqref{eq:defnhatQ}. In particular, by (ND), 
we have that $\hat P((z, E), \cdot) \ll \mathfrak m$, where $\mathfrak m$ is normalized Lebesgue measure on $\Gr_N(\T^{2N})$, while $d \hat P((z, E), \cdot) / d \mathfrak m = 
d \hat Q((F z, D_z F(E)), \cdot) / d \mathfrak m $ satisfies
\[
\left\| \frac{d \hat P((z, E), \cdot)}{d \mathfrak m} \right\|_{L^\infty}  
\leq M \, 
\]
uniformly in $(z, E) \in \Gr_N(\T^{2N})$. On the other hand, by stationarity, for $K \subset \Gr_N(\T^{2N})$ measurable we have
\[
\nu(K) = \int_{\Gr_N(\T^{2N})} \hat P((z, E), K) d \nu(z, E) 
\leq M \mathfrak m(K) \, .
\]
Therefore, $\nu \ll \mathfrak m$ and $d \nu / d \mathfrak m$ is essentially bounded from above by $M$. 
\end{proof}

\subsection{Hyperbolicity estimates assuming (F1), (F2)}

Let us record some estimates describing the quality of the predominant hyperbolicity
of the family $F = F_L$. Recall the notation
\[
\Cc^x_\a := \{ (u,v) \in \R^{2N} : \| v \| \leq \a \| u \| \} \, ,
\]
\[
B_\beta = \{ x \in \T^N :  |\det  D_x f_L | \leq L^{N - (1 - \beta)} \} \subset \T^N \, .
\]
We define $G_\beta = B_\beta^c$. 

\begin{lem}\label{lem:bulkHyp}
Fix $\beta \in (0,1)$ and let $L$ be sufficiently large. 
Let $z = (x,y) \in \T^{2N}$ be such that $x \in G_\beta$. 
\begin{itemize}
\item[(a)] Let $w = (u,v) \in T_z \T^{2N} \cong \R^{2N}$ be such that $w \in \Cc^x_{1/10}$. Then, 
$D_z F (w) \in \Cc^x_{1/10}$, and 
\[
\| D_z F (w) \| \geq 
 L^{\frac23 \beta} \| w \| \, .
\]
\item[(b)] Let $E \subset \R^{2N}$ be
an $N$-dimensional subspace such that $E \subset \Cc^x_{{1/10}}$. Then,
	\begin{itemize}
	\item[(i)] $E' := D_z F(E)$ is an $N$-dimensional subspace satisfying $E' \subset \Cc^x_{1/20}$, and
	\item[(ii)] $\det (D_z F|_E) \geq \frac{1}{2^N} L^{N - (1 - \beta)}$. 
	\end{itemize}
\end{itemize}
\end{lem}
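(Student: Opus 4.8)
The plan is to reduce both parts to a single lower bound on the smallest singular value of $A := D_x f_L$. From the block form $D_z F(u,v) = (A u - v,\, u)$ for $(u,v) \in \R^N \oplus \R^N \cong \R^{2N}$, together with $x \in G_\beta = B_\beta^c$ (so $|\det A| > L^{N-(1-\beta)}$) and (F1) (so $\prod_{i=1}^{N-1}\sigma_i(A) \le \|A\|^{N-1} \le (C_0 L)^{N-1}$), one obtains
\[
\sigma_N(A) \ =\ \frac{|\det A|}{\prod_{i=1}^{N-1}\sigma_i(A)}\ \ge\ C_0^{-(N-1)}\, L^\beta \, , \qquad \text{i.e.}\qquad \|A^{-1}\| \le C_0^{N-1} L^{-\beta} \, .
\]
Both $\sigma_N(A) \to \infty$ and $\|A^{-1}\| \to 0$ as $L \to \infty$, and this estimate does essentially all of the work.

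For part (a): if $w = (u,v) \in \Cc^x_{1/10}$ then $\|Au - v\| \ge \big(\sigma_N(A) - \tfrac1{10}\big)\|u\|$, which for $L$ large is both $\ge 20\|u\|$ — giving $D_z F(w) \in \Cc^x_{1/20} \subset \Cc^x_{1/10}$ — and $\ge \tfrac12 \sigma_N(A)\|u\|$. Combining the latter with $\|D_z F(w)\| \ge \|Au - v\|$ and $\|u\| \ge (1.01)^{-1/2}\|w\|$ (valid for $w \in \Cc^x_{1/10}$) gives $\|D_z F(w)\| \ge L^{\frac23\beta}\|w\|$ once $L$ is large enough that $\tfrac12 (1.01)^{-1/2} C_0^{-(N-1)} L^{\frac13\beta} \ge 1$; any exponent strictly below $\beta$ would do equally well. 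For part (b)(i): $D_z F$ has determinant $\pm 1$, so $E' := D_z F(E)$ is $N$-dimensional, and applying the estimate above to every $w \in E \subset \Cc^x_{1/10}$ shows $E' \subset \Cc^x_{1/20}$.

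The one substantive computation is (b)(ii). Since $E \subset \Cc^x_{1/10}$, the projection to the $\R^x$-factor restricted to $E$ is an isomorphism, so $E = \{(u, Hu) : u \in \R^N\}$ for a unique linear $H : \R^N \to \R^N$ with $\|H\| \le \tfrac1{10}$, and $D_z F$ carries this parametrization of $E$ to the parametrization $u \mapsto ((A - H)u,\, u)$ of $E'$. Pulling the induced Riemannian volumes on $E$ and $E'$ back through these (non-isometric) parametrizations, the volume-ratio definition of $\det(D_z F|_E)$ reduces to
\[
\det(D_z F|_E) \ =\ \left( \frac{\det\big(I + (A-H)^\top(A-H)\big)}{\det\big(I + H^\top H\big)} \right)^{1/2} \, .
\]
One then bounds the denominator above by $(1 + \tfrac1{100})^N$ using $\|H\| \le \tfrac1{10}$, and the numerator below by $\det(A - H)^2$; writing $A - H = A(I - A^{-1}H)$ with $\|A^{-1}H\| \le \tfrac1{10} C_0^{N-1} L^{-\beta} =: \eta_L \to 0$, one has $|\det(A-H)| \ge (1 - \eta_L)^N |\det A| > (1 - \eta_L)^N L^{N-(1-\beta)}$. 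Hence $\det(D_z F|_E) \ge (1-\eta_L)^N (1.01)^{-N/2} L^{N-(1-\beta)}$, which exceeds $2^{-N} L^{N-(1-\beta)}$ for $L$ large, since $(1-\eta_L) \to 1$ while $2/\sqrt{1.01} > 1$.

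The main obstacle — really the only delicate point — is this last estimate: one cannot simply discard $H$ when bounding $\det(A-H)$ from below, because $A$ may be badly conditioned ($\sigma_N(A)$ is only guaranteed to be $\gtrsim L^\beta$, far below $\|A\| \lesssim L$). The resolution is that $\sigma_N(A) \gg \|H\|$ regardless, so $H$ is a negligible perturbation of $A$; and one must retain the correction factors $\det(I + H^\top H)$ and $\det(I + (A-H)^\top(A-H))$ coming from the graph parametrizations failing to be isometries, since the slack between those $O(1)$ constants and the target constant $2^{-N}$ is exactly what must be absorbed (which is why $L$ is taken large depending on $N$, $\beta$, and $C_0$).
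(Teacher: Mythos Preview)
Your proof is correct and follows essentially the same approach as the paper: both reduce everything to the minimum-norm bound $\sigma_N(D_x f_L) \geq C_0^{-(N-1)} L^\beta$, treat (a) identically, and handle (b)(ii) via a graph parametrization $E = \graph H$ combined with a perturbation estimate for $\det(D_x f - H)$. Your version is slightly more streamlined in two places---you obtain (b)(i) as an immediate corollary of (a) rather than recomputing the graph map of $E'$, and your Gram-determinant formula for $\det(D_zF|_E)$ and the factorization $A - H = A(I - A^{-1}H)$ are a bit cleaner than the paper's corresponding steps---but the ideas are the same.
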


\begin{proof}
For an $N \times N$ matrix $A$, 
write $m(A) = \| A^{-1} \|^{-1} = \min\{ \| A v\| / \| v \| : v \in \R^N \setminus \{ 0 \}\}$ for the minimum norm of $A$ (setting $m(A) = 0$ if $A$ is not invertible). To start, the estimate
\begin{align}\label{eq:minNormEst}
m(D_x f) \geq C_0^{- (N-1)} L^{\beta} 
\end{align}
follows from (F1) and the standard fact that $m(D_x f) \geq \det(D_x f) / \| D_x f\|^{N-1}$. 

For the estimate in (a), assume $w = (u,v) \in \Cc^x_\alpha$ for some $\alpha > 0$. Then
$\| v \| \leq \alpha \| u \|$ and $\| u \| \leq \| w \| \leq \sqrt{1 + \alpha^2} \| u \|$. 
So,
\begin{align*}
\| D_z F(w) \| & \geq \| D_x f(u)\| - \| v \| \geq (m(D_x f) - \alpha) \| u \| \\
& \geq \frac{C_0^{- (N-1)} L^\beta - \alpha}{\sqrt{1 + \alpha^2}} \| w \| \\
&\geq L^{2\beta / 3}\|w\|\,.
\end{align*}
The last inequality above holds when $L$ is large enough and $\alpha = 1/10$. 

For (b)(i): by hypothesis, we can express $E = \graph G = \{ (u, G(u)) : u \in \R^x\}$, 
where $G : \R^x \to \R^y$ is a linear map
with $\| G \| \leq 1/10$. To express $E'  = D_z F(E)$ in the form $E'= \graph G'$, we would need to have
that for all ${u'} \in \R^x$ there exists ${u} \in \R^x$ so that
\[
\left( \begin{array}{c} u' \\ G'(u') \end{array} \right) = 
\left( 
\begin{array}{c c} D_x f & -I_N \\ I_N & 0_N \end{array}
\right) 
\left( \begin{array}{c} u \\ G(u) \end{array} \right) 
= \left( \begin{array}{c} D_x f (u) - G(u) \\ u \end{array}  \right) 
\]
Formally, then, we ought to have $G' = (D_x f - G)^{-1}$. That this 
exists follows from \eqref{eq:minNormEst}; moreover, 
\[
\| G'\| \leq \frac{1}{m(D_x f) - 1/10} \leq 2L^{-\beta} \ll 1/20
\]
when $L$ is sufficiently large, hence $E' = D_z F(E) \subset \Cc^x_{1/20}$ as desired. 

For (b)(ii), define $\Pi^x : \R^{2N} \cong \R^x \times \R^y \to \R^x$ to be the orthogonal projection
onto $\R^x$. Then, 
\begin{align}\label{eq:detValueEstimate}
\det(D_z F|_E) = \det(D_x f - G) \cdot \frac{\det(\Pi^x|_E)}{\det(\Pi^x|_{E'})} 
= \det(D_x f - G) \cdot \frac{\det (I + G')}{\det (I + G)}
\end{align}
on noting that $(\Pi^x|_E)^{-1} = (I + G) : \R^x \to \R^{2N}$, and similarly for $\Pi^x|_{E'}$.
For these terms we have $ (1 - 1/10)^N \leq \det(I + G), \det(I + G')\leq  (1 + 1/10)^N]$, while
for the remaining $D_x f - G$ term we have
\[
\det(D_x f - G) \geq \det D_x f - \frac{\| G\|}{m(D_x f) - \| G \|} \geq \frac12 L^{N - (1- \beta)}
\]
using the elementary estimate $| \det(A + B) - \det(A)| \leq \| B \|/ (m(A) - \| B \|)$. 

\end{proof}

Condition (C) says that the randomizations $R_\omega, \omega \in \Omega_0$ do not 
`disrupt' the hyperbolicity of the system too much. The following is an immediate consequence of 
(C) and Lemma \ref{lem:bulkHyp}.

\begin{lem}\label{lem:horizPropagate}
The following holds for $\P_0$-a.e. $\omega \in \Omega_0$. Fix $\beta \in (0,1)$ and let $L$ be sufficiently large. Let $z = (x,y) \in \T^{2N}, x \in G_\beta$. Then, 
\begin{itemize}
\item[(i)] Let $w = (u,v) \in T_z \T^{2N} \cong \R^{2N}$ be such that $w \in \Cc^x_{1/10}$. 
Then, 
\[
\| D_z F_\omega(w)\| \geq L^{\frac12 \beta} \| w \| 
\]
\item[(ii)] Let
$E \subset \Cc^x_{1/10}$ be an $N$-dimensional subspace. Then, $E' = D_z F_\omega(E)$ is an $N$-dimensional subspace with $E' \subset \Cc^x_{1/10}$. 
\end{itemize}

\end{lem}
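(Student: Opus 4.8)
The plan is to write $D_z F_\omega = D_{Fz} R_\omega \circ D_z F$ by the chain rule and then simply combine the conclusions of Lemma \ref{lem:bulkHyp} with the crude cone and bi-Lipschitz bounds supplied by condition (C), observing that (C) holds at every point of $\T^{2N}$ and in particular at the image point $Fz$.

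For part (i), let $w = (u,v) \in \Cc^x_{1/10}$ with $x \in G_\beta$. Lemma \ref{lem:bulkHyp}(a) gives both $D_z F(w) \in \Cc^x_{1/10}$ and $\| D_z F(w)\| \geq L^{\frac23 \beta}\| w\|$. Applying $D_{Fz}R_\omega$ and using $\| (D_{Fz}R_\omega)^{-1}\| \leq 2$ from (C), we get $\| D_z F_\omega(w)\| \geq \frac12 \| D_z F(w)\| \geq \frac12 L^{\frac23\beta}\| w\|$. Since $L$ is large, $\frac12 L^{\frac23\beta} \geq L^{\frac12\beta}$ (equivalently $L^{\beta/6}\geq 2$), which is the claimed bound.

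For part (ii), Lemma \ref{lem:bulkHyp}(b)(i) gives that $D_z F(E)$ is an $N$-dimensional subspace contained in $\Cc^x_{1/20}$. The first bullet of (C), applied at the point $Fz$, yields $D_{Fz}R_\omega(\Cc^x_{1/20}) \subset \Cc^x_{1/10}$, hence $E' = D_{Fz}R_\omega(D_z F(E)) \subset \Cc^x_{1/10}$; and $E'$ is still $N$-dimensional because $R_\omega$ is a diffeomorphism, so $D_{Fz}R_\omega$ is a linear isomorphism.

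The argument is essentially bookkeeping, and there is no genuine obstacle. The only points needing a moment's attention are that condition (C) must be invoked at the image point $Fz$ rather than at $z$ (legitimate, since (C) is stated for all $z \in \T^{2N}$), and that the exponent $\frac23\beta$ in Lemma \ref{lem:bulkHyp}(a) has enough slack to absorb the factor $\frac12$ coming from the bi-Lipschitz bound in (C) while still leaving the stated exponent $\frac12\beta$.
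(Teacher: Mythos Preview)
Your proof is correct and follows exactly the approach the paper intends: the paper simply states that the lemma is an immediate consequence of condition (C) and Lemma~\ref{lem:bulkHyp}, and you have supplied the straightforward bookkeeping that makes this precise.
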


\section{Proof of Theorem \ref{thm:main}}\label{SThmMain}

In brief, our method will be to obtain a lower bound of the form 
$\sum_{i = 1}^N \lambda_i \geq (1 - \varepsilon) N \log L$ on 
the sum of the first $N$ Lyapunov exponents for $\varepsilon > 0$ small and $L$ sufficiently large. 
This directly implies $\lambda_i \geq (1 - (2 N-1) \varepsilon) \log L$ for each $1 \leq i \leq N$,
in view of the fact that $\lambda_i \leq \lambda_1 \leq (1 + \varepsilon) \log L$ for all 
$i$, $\varepsilon > 0$ and $L \gg 1$ (see condition (F1)). Since $\sum_{i = 1}^{2N} \lambda_i = 0$, 
similar considerations apply to the exponents $\lambda_{N + 1}, \cdots, \lambda_{2N}$. 
These proofs are straightforward and omitted for brevity. 

From this point forward, we will focus our attention on $\sum_{i = 1}^N \lambda_i$, which we shall 
estimate using a stationary measure $\nu$ for the Markov chain $(Z_n, E_n)$
on $\Gr_N(\T^{2N})$, following Lemma \ref{lem:measureToLE}. 
Applying \eqref{eq:compareLEStatMeas}, condition (C), and the chain rule $D_z F_\omega = D_{F_\omega z} R_\omega \circ D_z F$, we have
\[
 \sum_{i = 1}^N \lambda_i \geq - N \log 2 + \E \int_{\Gr_N(\T^{2N})} \log | \det(D_z F|_{E}) | \, d \nu(z, E)
\]
By stationarity, for any bounded measurable $\phi : \Gr_N(\T^{2N} ) \to \R$ we have (by a slight abuse of notation)
\[
\int \phi(z, E) d \nu(z, E) = \int \left(\E_{(z, E)} \phi(Z_n, E_n)\right) d \nu(z, E) = \E \left[\int \phi(Z_n, E_n) d \nu(z, E)\right]
\]
 for all $n \geq 1$. Above, we interpret $(Z_n, E_n)$ as a function of the initial
condition $(Z_0, E_0) = (z, E)$ and the random sample $\uo$, with $\E_{(z, E)}$ denoting the corresponding expectation.
 Applying to $\phi(z, E) = \log |\det(D_z F|_E)|$, we conclude
\[
 \sum_{i = 1}^N \lambda_i  \geq - N \log 2 + \E \underbrace{\int \log | \det(D_{Z_n} F|_{E_n})| d \nu(z, E)}_{(**)} \, .
\]

To prove Theorem \ref{thm:main} it therefore suffices to bound $(**)$ as follows.
\begin{prop}[Main estimate]\label{prop:mainEst}
Fix $\alpha, \beta \in (0,1)$ and {$\d \in(0,c_\beta)$}. Let $L$ be sufficiently large in terms of these parameters. 
Then, there exists $n \gg 1$, depending on $L$, such that for a.e. $\uo \in \Omega$, we have
\[
(**) = \int \log | \det(D_{Z_n} F|_{E_n}) | d \nu(z, E) \geq \alpha N \log L \, .
\]
\end{prop}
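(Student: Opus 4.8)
The plan is to estimate the integrand $\log|\det(D_{Z_n}F|_{E_n})|$ by decomposing $\Gr_N(\T^{2N})$ according to whether or not the tangent data $(Z_n, E_n)$ has reached the ``good'' region where the predominant hyperbolicity of Lemma \ref{lem:bulkHyp} applies --- namely where $X_n \in G_\beta$ \emph{and} $E_n$ lies in the horizontal cone $\Cc^x_{1/10}$. On the good set, Lemma \ref{lem:bulkHyp}(b)(ii) gives $\det(D_{Z_n}F|_{E_n}) \geq 2^{-N} L^{N-(1-\beta)}$, so $\log|\det(D_{Z_n}F|_{E_n})| \geq (N - (1-\beta))\log L - N\log 2$, which is comfortably above $\alpha N \log L$ once $\alpha < 1$ and $L$ is large. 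On the complement, the determinant could in principle be small; but since $F$ is volume-preserving with $\|D_zF\| \lesssim L$ (condition (F1)), we have a deterministic lower bound $\log|\det(D_{Z_n}F|_{E_n})| \geq -C N \log L$ for a constant $C$ independent of $L$, so the negative contribution is at worst $O(\log L)$ times the $\nu$-measure of the bad set. Hence it suffices to show that, for a suitably chosen $n = n(L)$, the bad set has $\nu$-measure small enough --- say $\leq L^{-\tau}$ for some fixed $\tau>0$ --- so that the negative part is negligible compared to the positive part, and the total exceeds $\alpha N \log L$.

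The heart of the argument is therefore to prove that for some $n \gg 1$ the stationary measure $\nu$ assigns small mass to the set of $(z,E)$ for which the Markov chain $(Z_n, E_n)$ started at $(z,E)$ has \emph{not} yet entered the good region. Here I would use the two ingredients flagged in the introduction. First, the a priori density bound from Lemma \ref{lem:aPrioriBoundND}: $d\nu/d\mathfrak m \leq M$, so it is enough to bound the \emph{Lebesgue} measure $\mathfrak m$ of the bad set and then multiply by $M$; the hypothesis $M \leq L^{\frac12 \beta L^{c_\beta - \delta}}$ is exactly what lets us absorb this factor provided the Lebesgue-measure bound decays faster than $L^{-\frac12\beta L^{c_\beta-\delta}}$. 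Second, the hyperbolicity/cone propagation of Lemma \ref{lem:horizPropagate}: once $E_n$ enters $\Cc^x_{1/10}$ it stays there forever (part (ii)), and while $X_k \in G_\beta$ the relevant plane is expanded, so the mechanism for reaching the good region is that the base orbit $Z_k$ should spend enough time in $G_\beta$, whose complement $B_\beta$ has Lebesgue measure $\leq C_\beta L^{-c_\beta}$ by (F2). The strategy is: run the base chain $(Z_k)_{0\le k\le n}$, whose stationary measure is Lebesgue; by a Borel--Cantelli / union-bound estimate, the probability (over $\uo$, with $z$ Lebesgue-distributed) that the orbit fails ever to land in $G_\beta$ in a way that aligns $E$ into the cone within $n$ steps is at most something like $(C_\beta L^{-c_\beta})^{cn}$ or, accounting for the fact that cone alignment may itself require a bounded number of consecutive good steps, $(C_\beta L^{-c_\beta})^{c n}$ up to combinatorial factors. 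Choosing $n \approx L^{c_\beta - \delta/2}$ (or whatever makes $(L^{-c_\beta})^{cn}$ beat $L^{-\frac12\beta L^{c_\beta-\delta}}$) then does it.

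More carefully, the bad set should be defined as $\{(z,E) : \text{for all } 0\le k\le n,\ (Z_k,E_k) \notin \text{good set}\}$, and its $\nu$-mass is controlled by stationarity: $\nu(\text{bad at time }0) = \int \hat P^{(n)}((z,E), \text{bad at time }0)\,d\nu \leq \sup_{(z,E)} \P(\text{the orbit from }(z,E)\text{ avoids the good set for }n\text{ steps})$ --- wait, more precisely one writes $\nu(\text{bad})\le \nu(\text{bad at all times } 0,\dots,n) + (\text{correction})$, so I would instead directly bound $(**)$ by splitting at time $n$: $(**) = \int \log|\det(D_{Z_n}F|_{E_n})|\,d\nu$, restrict to the event $G = \{X_n \in G_\beta, E_n \in \Cc^x_{1/10}\}$ and its complement, use the deterministic two-sided bounds above, and bound $\nu(G^c)$ --- which by stationarity equals $\P_\nu((Z_n,E_n)\in G^c)$ --- by the worst-case non-entrance probability times $M$. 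The main obstacle I anticipate is the cone-entrance estimate: showing that a base orbit visiting $G_\beta$ sufficiently often actually forces $E_k$ into $\Cc^x_{1/10}$ \emph{uniformly in the initial plane} $E_0$, including initial planes dangerously close to the contracting subspace $\R^y$ or to the ``hybrid'' subspaces described in Section \ref{subsec:commentsAndComparison}. Lemma \ref{lem:bulkHyp} only handles planes \emph{already} inside the cone, so one needs a separate argument --- presumably invoking the nondegeneracy (ND) one more time, or a direct computation showing that a single good step maps a positive-$\mathfrak m$-proportion of all $N$-planes into the cone --- to seed the induction. Quantifying how many good base-steps (and how much noise) are needed to guarantee cone entry, and matching that count against the budget allowed by the bound on $M$, is where the real work lies.
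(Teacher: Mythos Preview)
Your overall framework matches the paper's --- split $(**)$ over a good set where Lemma~\ref{lem:bulkHyp}(b)(ii) applies and a bad set where the crude bound $\log|\det| \geq -N\log(2C_0 L)$ holds, then control $\nu(\text{bad})$ --- but there are two genuine gaps.

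First, you cannot multiply the \emph{entire} bad-set Lebesgue measure by $M$. The base part of the bad set has Lebesgue measure only polynomially small in $L$ (at best $n C_\beta L^{-c_\beta}$ by a union bound), while $M$ may be as large as $L^{\frac12\beta L^{c_\beta-\delta}}$; the product blows up. The paper splits the bad set: for the part $\Bc^{n,1}$ determined by the base alone, it uses that $\nu$ projects to $\Leb_{\T^{2N}}$, so no factor of $M$ appears. The factor $M$ is applied only to the fiber part $\Bc^{n,2}$.

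Second, your cone-entrance mechanism is off. The paper does \emph{not} argue that a plane eventually enters $\Cc^x$ after enough (possibly intermittent) good base steps, nor does it use a Borel--Cantelli bound like $(L^{-c_\beta})^{cn}$, nor does it re-invoke (ND). Instead it restricts to $G^n_\beta = \{z : Z_k \in G_\beta \text{ for all } 0\le k\le n-1\}$ --- \emph{every} step good --- whose complement has $\Leb$-measure $\le n C_\beta L^{-c_\beta}$. On $G^n_\beta$ the key estimate is Proposition~\ref{prop:estLebMass}:
\[
\Leb_{\Gr_N(\R^{2N})}\{E : D_z F^n_\uo(E)\not\subset\Cc^x_2\}\le L^{-\beta n},
\]
proved \emph{deterministically} via the singular value decomposition: with every step uniformly hyperbolic, $D_z F^n_\uo$ has a gap $\sigma_N\ge L^{\beta n/2}\gg L^{-\beta n/2}\ge\sigma_{N+1}$ with top-$N$ singular subspaces in $\Cc^x_{1/10}$ (Lemma~\ref{lem:SVD}), and the set of initial $N$-planes not pushed into $\Cc^x_2$ is then an $O(L^{-\beta n})$-neighborhood of the codimension-$\ge 1$ variety of planes meeting $\Hc^\perp$ nontransversally (Lemmas~\ref{lem:complementChartDescription} and \ref{lem:containmentGrassman}). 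This Grassmannian-geometry argument is exactly the ``separate argument'' you anticipated needing but could not identify. Only this fiber set, of measure $\le L^{-\beta n}$, is multiplied by $M$; taking $n=\lceil L^{c_\beta-\delta}\rceil$ makes $M L^{-\beta n}\lesssim L^{-\delta}$ under the stated hypothesis on $M$.
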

The proof of Theorem  \ref{thm:main} is complete 
upon adjusting the parameter $\alpha$ and taking $L$ large enough to absorb the
remaining additive term $- N \log 2$.

\subsection*{Proof of Proposition \ref{prop:mainEst}: Exploiting predominant hyperbolicity}

Below $n \geq 1$ is fixed, to be determined later, and $\uo \in \Omega$ is an arbitrary random sample. 
Recall that $D_z F_\omega$ is strongly expanding in the $G_\beta$ along which $D_z F_\omega$ is strongly expanding in the 
horizontal cone $\Cc_\alpha^x = \{ ( u, v) : \| v \| \leq \alpha \| u \|\}$
for $z \in G_\beta$ (Lemma \ref{lem:bulkHyp}). For $n \geq 1$, define
\[
G^n_\beta = \{ z \in \T^{2N} : Z_i \in G_\beta \text{ for all } 0 \leq i \leq n -1\} 
\]
to be the set of trajectories experiencing this hyperbolicity for $n$ timesteps, where as usual we 
condition on $Z_0 = z$. 

Fix $z \in G^n_\beta$. Hyperbolic expansion along the $x$-direction $\R^x$ implies that 
the `bulk' of Grassmanian dynamics is attracted to a close vicinity of $\R^x$. This is, after all, 
the conceptual picture underlying the $N = 1$ case studied in the previous paper \cite{BXY1}.
The following is the analogue of Lemma 10 in \cite{BXY1}. 

\begin{prop}\label{prop:estLebMass}
Let $\uo \in \Omega$ be arbitrary, and let $\beta \in (0,1), n \geq 1$. Fix $z \in G^n_\beta$. 
Set $E_n = D_z F_\uo^n(E)$. Then,
\[
\Leb_{\Gr_N(\R^{2N})} \{ E \in \Gr_N(\R^{2N}) \text{ such that } E_n \notin \Cc^x_2\} \leq L^{- \beta n} \, .
\]
\end{prop}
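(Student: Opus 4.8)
The plan is to track the Grassmannian dynamics through the coordinate patch $\Uc_{\R^x}$, i.e., to represent $N$-planes in the horizontal cone as graphs $\graph_{\R^x} G$ of linear maps $G : \R^x \to \R^y$, and to show that along a trajectory in $G^n_\beta$ the fibered map induced on these graphs is a uniform contraction of rate $\sim L^{-\beta}$ per step toward the zero section $\R^x$. The key computation, already essentially carried out in the proof of Lemma \ref{lem:bulkHyp}(b), is that for $z = (x,y)$ with $x \in G_\beta$ the plane $E = \graph_{\R^x} G$ is sent by $D_z F_\omega$ (combining the base map $D_z F$ with $D R_\omega$ via condition (C)) to $E' = \graph_{\R^x} G'$ with $\|G'\| \lesssim L^{-\beta}\|G\|$ whenever $\|G\| \le 1/10$; more precisely $D_z F$ alone gives $G' = (D_x f - G)^{-1}$ with $\|G'\| \le 2 L^{-\beta}$, and (C) keeps the image inside $\Cc^x_{1/10}$. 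The first step is thus to isolate a clean statement: there is a constant $\kappa \lesssim L^{-\beta}$ so that, for $z \in G_\beta$ and any $E \in \Cc^x_{1/10}$, writing $E = \graph_{\R^x} G$ and $D_z F_\omega(E) = \graph_{\R^x} G'$, one has $\|G'\| \le \kappa \|G\|$ and $\|G'\| \le 1/10$ (so the patch is preserved and we may iterate).

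Next I would handle the set of "bad" initial planes. An initial $E \in \Gr_N(\R^{2N})$ either already lies in $\Cc^x_{1/10}$, in which case iterating the contraction above $n$ times (legitimate since $z \in G^n_\beta$ means every $Z_i \in G_\beta$ for $0 \le i \le n-1$) gives $E_n = \graph_{\R^x} G_n$ with $\|G_n\| \le \kappa^n \|G_0\| \le \kappa^n/10 \ll 2$, so $E_n \in \Cc^x_2$ automatically; or $E \notin \Cc^x_{1/10}$. Hence the bad set $\{E : E_n \notin \Cc^x_2\}$ is contained in $\{E \notin \Cc^x_{1/10}\}$ — but that set has fixed positive measure, not measure $L^{-\beta n}$, so this crude dichotomy is not enough and the real work is in the second case. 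The correct approach is to use the \emph{contraction of volume} of the fibered Grassmannian map restricted to the patch, combined with a coarea / change-of-variables estimate: the map $E \mapsto E_n$ on $\Uc_{\R^x} \cong L(\R^x,\R^y)$ has, at each step, Jacobian bounded by a power of $\kappa$ (because $G \mapsto (D_x f - G)^{-1}$ is, up to the bounded factors from $DR_\omega$, an "inversion" whose derivative has operator norm $\lesssim L^{-2\beta}$ near $G = 0$, and the pushforward of Lebesgue measure on the $Nk$-dimensional space of matrices picks up the corresponding power). Then the preimage under $E \mapsto E_n$ of $\Cc^x_2 \setminus \Cc^x_{1/10}$ (an annular region in the patch of bounded volume) has volume $\lesssim \kappa^{cN^2} \lesssim L^{-\beta n}$ after adjusting constants, and everything outside the patch maps into $\Cc^x_{1/10} \subset \Cc^x_2$ after one step anyway.

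Concretely the steps in order are: (1) record the graph-contraction lemma with explicit rate $\kappa \lesssim L^{-\beta}$, preserving $\Cc^x_{1/10}$, valid at each $Z_i$, $0 \le i \le n-1$, using Lemma \ref{lem:bulkHyp}(b) and condition (C); (2) observe that once a plane enters $\Cc^x_{1/10}$ it stays there and contracts toward $\R^x$, so after $n$ steps it is deep inside $\Cc^x_2$; (3) conclude that the map $\Psi_n : E \mapsto E_n$ sends the complement of the patch $\Uc_{\R^x}$ into $\Cc^x_{1/10}$ after finitely many (in fact one, by (C)) steps, so $\{E_n \notin \Cc^x_2\} \subset \Uc_{\R^x}$, i.e., all bad planes are graphs; (4) on the patch, estimate the Jacobian of $\Psi_n$ in the matrix coordinates $G \mapsto G_n$ — this is where I expect the main obstacle — showing $|\det D\Psi_n| \lesssim L^{-\beta n}$ uniformly, and that $\Psi_n$ maps into a bounded region of the patch; (5) combine via change of variables: $\Leb\{E : E_n \notin \Cc^x_2\} \le \Leb(\Psi_n^{-1}(\text{bounded bad region})) \lesssim L^{-\beta n}\cdot(\text{const})$, absorbing the constant into the exponent by taking $L$ large, which yields the stated bound $L^{-\beta n}$. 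The main obstacle is step (4): getting the \emph{sharp} exponent $\beta n$ (not $\beta n$ times a dimension-dependent constant that one then has to fight) on the Jacobian determinant of the iterated graph-transformation map, keeping careful track of how the $L^{-\beta}$-contraction in operator norm translates into the determinant of the derivative on the full $N^2$-dimensional (or rather $N\cdot N$) matrix space, and of the bounded distortion contributed by the $D_z F$ terms and by condition (C) at each of the $n$ steps; one must make sure the per-step distortion constant is subexponential in $n$ relative to $L^{-\beta}$, which is fine since these constants depend only on $N$ and $L$ while the gain $L^{-\beta}$ is genuine at each step — but the bookkeeping needs care.
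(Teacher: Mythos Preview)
Your plan has a genuine gap. Step (3) is false: the complement of $\Uc_{\R^x}$ does \emph{not} map into $\Cc^x_{1/10}$ after one step. For a concrete counterexample with $N = 2$, take $E = \Span\{(e_1,0),(0,e_1)\} \in \Gr_2(\R^4)$. This $E$ meets $\R^y$ nontrivially, so $E \notin \Uc_{\R^x}$; a direct computation (with $D_x f$ diagonal, say) gives $D_z F(E) = E$, so $E_n = E \notin \Cc^x_2$ for all $n$. More generally, the Grassmannian dynamics has a whole heteroclinic network of saddle-type behavior near the ``hybrid'' planes $\Span\{\pd_{x_{i_1}},\dots,\pd_{x_{i_l}},\pd_{y_{j_1}},\dots,\pd_{y_{j_{N-l}}}\}$ (this is exactly the issue flagged in Section~\ref{subsec:commentsAndComparison}), and nearby planes can remain outside $\Cc^x_2$ for arbitrarily many steps. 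So the bad set is not absorbed by either of your two cases. Steps (4)--(5) are also broken as written: the graph map $G \mapsto (D_x f - G)^{-1}$ is not globally defined on $\Uc_{\R^x}$ (it has a pole along $\det(D_x f - G) = 0$, where $\|G\| \sim L$), its Jacobian $|\det G'|^{2N}$ is not uniformly bounded, and in any case the inequality points the wrong way---to make a \emph{preimage} small via change of variables you need $|\det D\Psi_n|$ \emph{large} on it, not $\lesssim L^{-\beta n}$.

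The paper takes a different route that sidesteps all of this. It works with the composed map $A = D_z F^n_\uo$ directly: since both $A$ and $A^\top$ preserve $\Cc^x_{1/10}$, the top-$N$ singular subspace $\Hc = \Span\{h_1,\dots,h_N\}$ lies in $\Cc^x_{1/10}$ and one has the gap $\sigma_N \geq L^{\beta n/2} \geq L^{-\beta n/2} \geq \sigma_{N+1}$ (Lemma~\ref{lem:SVD}). The key estimate (Lemma~\ref{lem:containmentGrassman}) is then purely linear-algebraic: if $E_n \notin \Cc^x_2$, pick a unit $v' \in E_n \setminus \Cc^x_2$, split $v' = v'_\| + v'_\perp$ along $\Hc' \oplus (\Hc')^\perp$, and pull back by $A^{-1}$; the singular-value gap forces the $\Hc$-component of $v = A^{-1}v'/\|A^{-1}v'\|$ to have norm $\lesssim L^{-\beta n}$, so $E$ contains a unit vector within $O(L^{-\beta n})$ of $\Hc^\perp$. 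Hence $E$ lies in the $O(L^{-\beta n})$-neighborhood of the set $(\Uc_\Hc)^c$ of $N$-planes meeting $\Hc^\perp$ nontrivially. That set is a finite union of closed submanifolds of codimension $\geq 1$ (Lemma~\ref{lem:complementChartDescription}), so its $\eta$-neighborhood has measure $\lesssim \eta$. The relevant coordinate patch is $\Uc_\Hc$ (depending on $z,\uo,n$), not the fixed patch $\Uc_{\R^x}$, and the estimate is geometric rather than a Jacobian computation.
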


The proof of Proposition \ref{prop:estLebMass} is deferred for now. Let us show how it can be used to prove Proposition \ref{prop:mainEst}. For $z \in G^n_\beta$ define $\Gc_z^n = \{ E \in \Gr_N(\R^{2N}) : E_n \in \Cc_2^x\}$. Letting $\beta^* \in (0,1)$ be a parameter to be chosen later, define
\[
\Gc^n = \{ (z, E) \in \Gr_N(\T^{2N}) : z \in G^n_\beta \cap (F^n_\uo)^{-1} G_{\beta^*}, E \in \Gc_z^n \} \,  
\]
and $\Bc^n = \Gr_N(\T^{2N}) \setminus \Gc^n$. The integral of $(**)$ along $(z, E) \in \Gc^n \subset \Gr_N(\T^{2N})$ will result in a tight lower bound for $\det (D_{Z_N} F|_{E_n})$, 
while $\Bc^n$ is an error set along which we use the poor estimate
\begin{align}\label{eq:badEstDeterminant}
\log |\det (D_{Z_n} F|_{E_n})| \geq - N \log (2 C_0 L) \,,
\end{align}
which follows from (F1) and the form of the mapping $F = F_L$. 

Splitting
$(**)$ along the partition $\Gc^n, \Bc^n$, we have
\[
\int_{\Gc^n} \log | \det(D_{Z_n} F|_{E_n}) | d \nu(z, E) 
\geq
(1 - \nu(\Bc^n)) \underbrace{\inf_{z \in G_{\beta^*}, E \in \Cc^x_2} \log | \det (D_{z} F|_E)|}_{\dagger} \, . 
\]
Choosing $\beta^*$ sufficiently close to $1$, we can arrange for $\dagger \geq \frac{1 + \alpha}{2} N \log L$ (Lemma \ref{lem:bulkHyp}(b)(ii)) on taking $L$ sufficiently large. Plugging in \eqref{eq:badEstDeterminant}, we obtain
\[
(**) \geq \frac{1 + \alpha}{2} N \log L -  2 \nu(\Bc^n) \cdot N \log (2 C_0 L) \, .
\]

It remains to bound $\nu(\Bc^n)$ from above. We decompose $\Bc^n = \Bc^{n, 1} \cup \Bc^{n,2}$, where
\begin{gather*}
\Bc^{n,1} = \big( (G^n_\beta)^c \cup (F^n_\uo)^{-1} G_{\beta^*} \big) \times \Gr_N(\R^{2N})  \\
\Bc^{n, 2} = \{ (z, E) : z \in G^n_\beta, E_n \notin \Cc^x_2\} = \{ (z, E) : z \in G^n_\beta, E \notin \Gc^n_z \} \, .
\end{gather*}
For $\Bc^{n, 1}$ we have the simple estimate
\[
\nu(\Bc^{n, 1}) = \Leb_{\T^{2N}}((G^n_\beta)^c \cup (F^n_{\uo})^{-1} G_{\beta^*} ) \leq n C_\beta L^{- c_\beta} + C_{\beta^*} L^{- c_{\beta^*}} \, .
\]
For $\Bc^{n, 2}$, we estimate
\[
\nu(\Bc^{n, 2}) \leq M \Leb_{\Gr_N(\T^{2N})} (\Bc^{n, 2}) \leq M L^{- \beta n}
\]
using our bound on $\frac{d \nu}{d \mathfrak m}$ from Lemma \ref{lem:aPrioriBoundND}
and the estimate in Proposition \ref{prop:estLebMass}.

In total, we have shown that
\begin{align*}
(**) & \geq \frac{\a +1}{2} N \log L - \log L^N \bigg(  n C_\beta L^{- c_\beta} + M L^{- \beta n} + C_{\beta^*} L^{- c_{\beta^*}} \bigg) 
\end{align*}

Fix $n = \lceil L^{c_\beta - \delta} \rceil$ for some small $\delta \ll c_{\beta}$. Then, 
$n C_\beta L^{- c_\beta} = O(L^{- \delta})$, while $M L^{- \beta n} \lesssim L^{- \delta}$ 
as long as
\[
 M \leq L^{\frac12 \beta L^{c_\beta - \d}} \, .
\]
Thus, under this condition relating $M$ and $L$, we have
\[
(**) \geq \a N \log L +\left(  \frac{1 - \a}{2}  - C L^{- \min\{ \d, c_{\b^*}\}} \right) N \log L 
\geq \a N \log L
\]
assuming $L$ is sufficiently 
large in terms of $\alpha, \beta, \beta^*, \delta$. This completes the proof of Proposition \ref{prop:mainEst}.

\section{Proof of Main Proposition (Proposition \ref{prop:estLebMass})}\label{SProp}

In Section \ref{subsec:grassGeo} we recall and prove some necessary 
facts concerning the geometry of Grassmanians. The proof of Proposition
\ref{prop:estLebMass} is carried out in Sections \ref{subsec:SVD} and 
\ref{subsec:proveMainProp}.

\subsection{Lemmas on Grassmanian Geometry}\label{subsec:grassGeo}

In what follows, given a subspace $E \subset \R^m$, we write 
$\Pi_E : \R^m \to E$ for its corresponding orthogonal projection.

The following alternative metric $d_H$ on Grassmannians is very useful in practice. 
\begin{defn}
Let $E, E' \in \Gr_k(\R^m)$. We define the \emph{Hausdorff distance} $d_H(E, E')$ between
them by
\[
d_H(E, E') = \max\left\{ \max_{\substack{v' \in {E'} \\ \| v' \| = 1}} d(v', E), \max_{\substack{v \in {E} \\ \| v \| = 1 }} d(v, E') \right\} \, , 
\]
where above $d(v, E)$ denotes the minimal Euclidean distance between $v \in \R^m$ and $E \subset \R^m$. 
\end{defn}

The distance function $d_H$ is uniformly equivalent to the geodesic distance $d_{geo}$: 
\begin{lem}\label{lem:unifEquivDistances}
For any $E, E' \in \Gr_k(\R^m)$, we have
\[
\frac{2}{\pi} d_{geo}(E, E') \leq d_H(E, E') \leq d_{geo}(E, E')
\]
\end{lem}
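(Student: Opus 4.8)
The plan is to use the canonical description of the metrics in terms of the principal angles between the two subspaces. Recall that for $E, E' \in \Gr_k(\R^m)$ there are principal angles $0 \le \theta_1 \le \cdots \le \theta_k \le \pi/2$ obtained from the singular values of $\Pi_{E'}|_E$ (equivalently, of the matrix of inner products between orthonormal bases of $E$ and $E'$). The geodesic distance is $d_{geo}(E,E') = \bigl( \sum_{i=1}^k \theta_i^2 \bigr)^{1/2}$, which I would cite from the standard references already invoked in this subsection (\cite{nicolaescu2007lectures, piccione2000geometry}), noting the normalization is consistent with the metric $g_E$ written earlier. So the real content is to identify $d_H(E,E')$ in terms of these angles and then compare.

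First I would show $d_H(E, E') = \sin \theta_k$, i.e. the sine of the \emph{largest} principal angle. For the direction ``$\le$'': for a unit vector $v \in E$, decompose $v$ in the principal-vector basis of $E$; its distance to $E'$ equals $\bigl( \sum_i \sin^2\theta_i\, \langle v, u_i\rangle^2\bigr)^{1/2} \le \sin\theta_k$, with equality when $v = u_k$, and symmetrically for unit vectors in $E'$ (the principal angles are symmetric in $E, E'$). Hence $\max_{v\in E, \|v\|=1} d(v,E') = \sin\theta_k$, and the same for the other term, giving $d_H(E,E') = \sin\theta_k$. Then $d_{geo}(E,E') = (\sum \theta_i^2)^{1/2} \ge \theta_k \ge \sin\theta_k = d_H(E,E')$, which is the right-hand inequality. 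For the left-hand inequality, $d_{geo} = (\sum\theta_i^2)^{1/2} \le \sqrt{k}\,\theta_k$ is too lossy; instead I note that each $\theta_i \le \pi/2$ together with the elementary bound $\theta \le \frac{\pi}{2}\sin\theta$ on $[0,\pi/2]$ gives $\theta_i \le \frac{\pi}{2}\sin\theta_i \le \frac{\pi}{2}\sin\theta_k$ for every $i$ — wait, that is also not quite what is wanted since it reintroduces a $\sqrt k$. The correct route: use $\theta_i \le \frac{\pi}{2}\sin\theta_i$ coordinatewise to get $d_{geo}^2 = \sum \theta_i^2 \le \frac{\pi^2}{4}\sum \sin^2\theta_i$, and then bound $\sum_i \sin^2\theta_i$ by $d_H(E,E')^2$? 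That fails because $\sum \sin^2\theta_i$ can exceed $\sin^2\theta_k$.

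So I would instead reconsider and prove $\frac{2}{\pi} d_{geo} \le d_H$ directly from $d_{geo} = (\sum\theta_i^2)^{1/2}$ and $d_H = \sin\theta_k$ by observing that this inequality is \emph{not} true in general (take $k$ large and all $\theta_i$ equal and small: LHS $\sim \frac{2}{\pi}\sqrt k\,\theta$, RHS $\sim \theta$). This tells me the definition of $d_{geo}$ being used here must carry a different normalization — most likely $d_{geo}(E,E') = \theta_k$ itself, i.e. the operator-norm / largest-angle metric, or the metric is normalized so that $d_{geo} = \max_i \theta_i$. Under $d_{geo}(E,E') = \theta_k$, the claimed inequalities are exactly $\frac{2}{\pi}\theta_k \le \sin\theta_k \le \theta_k$, which is the classical Jordan inequality $\frac{2}{\pi}\theta \le \sin\theta \le \theta$ for $\theta \in [0,\pi/2]$, and the proof is immediate. \textbf{The main obstacle}, then, is bookkeeping rather than mathematics: I must pin down precisely which normalization of $d_{geo}$ the paper intends (consistent with the metric $g_E(H_1,H_2) = \Tr_E(H_2^\top H_1)$ and the $O(m)$-invariance discussed above), express both $d_{geo}$ and $d_H$ in terms of the principal angles $\{\theta_i\}$ of the pair $(E,E')$, and then the two-sided bound reduces to an elementary scalar inequality (Jordan's inequality) applied to the relevant angle. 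I would organize the write-up as: (1) recall principal angles and the formula for $d_{geo}$; (2) compute $d_H(E,E') = \sin\theta_{\max}$ via the orthogonal-decomposition estimate above; (3) conclude by Jordan's inequality.
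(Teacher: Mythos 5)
Your analysis is sharp and actually uncovers an issue the paper glosses over. You follow the same route as the paper: express both distances in terms of the principal (Jordan) angles $\theta_1 \le \cdots \le \theta_k$ of the pair $(E,E')$, establish $d_H(E,E') = \sin\theta_k$, and compare via Jordan's inequality. The paper records $\frac{2}{\pi}\theta_k \le d_H(E,E') \le \theta_k$ together with $\theta_k \le d_{geo}(E,E') \le k\theta_k$ and then declares the proof complete, but chaining these only yields $\frac{2}{\pi k}\,d_{geo} \le d_H \le d_{geo}$ (or $\frac{2}{\pi\sqrt k}$ if one uses the sharper $d_{geo} \le \sqrt k\,\theta_k$), \emph{not} the stated constant $\frac{2}{\pi}$. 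Your objection is correct and the counterexample is real: if $E \perp E'$ inside $\R^{2k}$, every Jordan angle is $\pi/2$, so $d_{geo} = \sqrt k\cdot\pi/2$ while $d_H = 1$, and the left inequality with constant $\frac{2}{\pi}$ already fails for $k = 2$. The lemma as written holds only for $k=1$; for general $k$ the correct constant is $\frac{2}{\pi\sqrt k}$.

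Your proposed repair — reinterpreting $d_{geo}$ as the operator-norm metric $\theta_k$, which would make the lemma the scalar Jordan inequality verbatim — would fix the constant, but it is inconsistent with the Riemannian metric $g_E(H_1,H_2) = \Tr_E(H_2^\top H_1)$ declared earlier in the section: that is the Hilbert--Schmidt inner product, whose geodesic distance is $(\sum_i\theta_i^2)^{1/2}$. The honest resolution is to replace $\frac{2}{\pi}$ by $\frac{2}{\pi\sqrt k}$ in the lemma statement. This is harmless downstream: the lemma's only use is at the end of the proof of Lemma \ref{lem:containmentGrassman}, where a Hausdorff bound $d_H \lesssim L^{-\beta n}$ is upgraded to a geodesic bound, and the implied constant there is already permitted to depend on $N$. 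Your instinct that the discrepancy is ``bookkeeping rather than mathematics'' is correct, but the bookkeeping error lies in the lemma's stated constant, not in your argument; spotting it is the most valuable part of your write-up.
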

This appears to be well-known, but we are unable to find a proof of Lemma 
\ref{lem:unifEquivDistances} in the literature. For the 
sake of completeness a sketch is provided below. 
\begin{proof}
Let $E, E' \in \Gr_k(\R^m)$. Then, 
$
d_{geo}(E, E') = \sqrt{ \psi_1^2 + \cdots + \psi_k^2}$
where each $\psi_i = \psi_i(E, E') \in [0,\pi/2]$ is the $i$-th \emph{Jordan angle} between $E, E'$, defined by, e.g., 
\begin{align}\label{eq:minMaxJordanAngle}
\cos \psi_i = \min_{\substack{P \subset E \\ \dim P = i}} \max_{\substack{v \in P \\ \| v \| = 1}} 
\max_{\substack{w \in E' \\ \| w \| = 1}} \langle v, w\rangle
\end{align}
(see Proposition 3(b) of \cite{neretin2001jordan}). We have $\psi_1 \leq \psi_2 \leq \cdots \leq \psi_k$, hence $\psi_k \leq d_{geo}(E, E') \leq k \psi_k.$

To connect this with the Hausdorff metric, by \cite{kato2013perturbation} Theorem I-6.34
and some elementary arguments, we have
\begin{align*}
d_H(E, E') &= \| (I - \Pi_{E'}) \Pi_E\| = \sup_{v \in E} \| (I - \Pi_{E'}) v\| = \sup_{v \in E} d(v, E') \\
& = \sin \angle (v, \Pi_{E'} v) \, .
\end{align*}
On the other hand, by \eqref{eq:minMaxJordanAngle}, we have
\[
\max_{\substack{w \in E' \\ \| w \| = 1}} \langle v, w \rangle = \left\langle v, \frac{\Pi_{E'} v}{\|\Pi_{E'} v\|} 
\right\rangle = \cos \angle (v, \Pi_{E'} v) \, , 
\]
hence $\psi_k = \max_{v \in E, \| v \| = 1} \angle (v, \Pi_{E'} v)$. We conclude, then, that
$d_H(E, E') = \sin \psi_k.$
In particular, $\frac{2}{\pi} \psi_k \leq d_H(E, E') \leq \psi_k$. This completes the proof. 
\end{proof}

We close this section with a geometric description of the set
$(\Uc_E)^c$ of $k$-dimensional subspaces meeting $E^\perp$ nontransversally. 

\begin{lem}\label{lem:complementChartDescription}
Assume $k \leq m / 2$. 
Then, the set $(\Uc_E)^c$ is a finite union of closed submanifolds of $\Gr_k(\R^m)$
of codimension $\geq 1$. 
\end{lem}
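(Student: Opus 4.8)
The plan is to stratify $(\Uc_E)^c$ according to the dimension of the intersection with $E^\perp$. Recall from fact (A) that $E' \in \Uc_E$ iff $E' \cap E^\perp = \{0\}$, i.e. iff $\dim(E' \cap E^\perp) = 0$; so $(\Uc_E)^c = \bigcup_{j \geq 1} S_j$, where
\[
S_j := \{ E' \in \Gr_k(\R^m) : \dim(E' \cap E^\perp) = j \} \, .
\]
Since $\dim E^\perp = m - k$ and $\dim E' = k$, the dimension of the intersection can range only over $j \in \{0, 1, \dots, k\}$ (using $k \le m/2 \le m-k$), so this is a \emph{finite} union. It therefore suffices to show each nonempty $S_j$, $j \ge 1$, is a submanifold of codimension $\ge 1$; in fact I will compute $\codim S_j = j(m - 2k + j)$, which is $\ge 1$ whenever $j \ge 1$ (here again $m \ge 2k$ is used).

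The key step is the local model for $S_j$. Fix $E'_0 \in S_j$, and choose an auxiliary decomposition adapted to it: write $W = E'_0 \cap E^\perp$ (dimension $j$), pick a complement $E'_0 = W \oplus U$ inside $E'_0$ (so $\dim U = k - j$), and note $U \cap E^\perp = \{0\}$. One can then cover a neighborhood of $E'_0$ in $\Gr_k(\R^m)$ by a chart of the form $\graph_{E'_0}$, identifying nearby $k$-planes with linear maps $H \in L(E'_0, (E'_0)^\perp)$; under this identification $E'_0 \leftrightarrow 0$. The condition that $\graph_{E'_0} H$ meets $E^\perp$ in a subspace of dimension exactly $j$ becomes a rank condition on a matrix built from $H$ — writing things in block form with respect to the splitting $E'_0 = W \oplus U$ and $(E'_0)^\perp = (E^\perp \ominus \text{proj of }U?) \oplus \dots$, the relevant block (the restriction encoding how $W$ is tilted out of $E^\perp$) must have rank $\le k - j$ for the intersection dimension to be $\ge j$, and generically the intersection drops to exactly $j$. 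Concretely, $S_j$ near $E'_0$ is the preimage of the determinantal variety $\{\text{rank} = j\text{-drop locus}\}$ under a submersion, and a standard transversality/constant-rank computation identifies it as a submanifold of the claimed codimension. (Alternatively: the incidence variety $\{(E', P) : P \subset E' \cap E^\perp, \dim P = j\}$ fibers over $\Gr_j(E^\perp)$ with fiber $\Gr_{k-j}(\R^m / E^\perp \text{-ish})$, and projects to $\bar{S_j}$; a dimension count of this flag-type manifold gives the same codimension, and shows $\overline{S_j}\setminus S_j = \bigcup_{j' > j} S_{j'}$.)

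Finally, closedness of $(\Uc_E)^c$ is automatic since $\Uc_E$ is open (fact (A)); and $(\Uc_E)^c = \bigcup_{j=1}^{k} S_j$ with each $S_j$ a locally closed submanifold, whose closure is $\bigcup_{j' \ge j} S_{j'}$ — still a finite union of submanifolds of codimension $\ge 1$, which is all that is asserted. The main obstacle I anticipate is bookkeeping in the block-matrix description of the chart: one must choose bases for $E'_0$ and $(E'_0)^\perp$ compatibly with both the splitting $E'_0 = W \oplus U$ and with $E^\perp$, write the intersection-dimension condition as an honest rank condition, and then verify the constant-rank hypothesis to conclude smoothness. This is routine linear algebra but requires care to get the codimension exactly right; since the statement only needs codimension $\ge 1$, one can afford to be slightly lossy and simply exhibit, near each $E'_0 \in S_j$, one nontrivial smooth scalar constraint vanishing on $S_j$, e.g. a suitable $(k-j+1)\times(k-j+1)$ minor, together with the constant-rank structure.
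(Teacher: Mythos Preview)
Your main approach via the Schubert stratification $S_j = \{E' : \dim(E' \cap E^\perp) = j\}$ is correct and is genuinely different from the paper's proof. The paper instead parametrizes $(\Uc_E)^c$ as the image of a single smooth map $\Phi$ from a compact fiber bundle $\Ec \to \Gr_1(E^\perp)$ with fiber $\Gr_{k-1}(\langle v\rangle^\perp)$, sending $(v,\hat S)\mapsto \langle v\rangle + \hat S$, and simply checks $\dim \Ec < \dim \Gr_k(\R^m)$. This is essentially the $j=1$ case of your parenthetical ``incidence variety'' alternative, so you have in fact anticipated the paper's argument as a side remark. Your stratification buys more precise information (the exact codimension $j(m-2k+j)$ of each stratum), at the cost of the rank-locus bookkeeping you flag; the paper's route is shorter because it sidesteps that computation entirely and only needs a dimension count.

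One caution about your closing paragraph: neither the $S_j$ nor their closures $\overline{S_j}=\bigcup_{j'\ge j}S_{j'}$ deliver the lemma \emph{as literally stated}. The $S_j$ are locally closed but not closed for $j<k$, while $\overline{S_j}$ is a Schubert variety and is singular along the deeper strata (already for $k=2,\ m=4$ the set $\overline{S_1}$ is a quadric cone with an isolated singularity at the point $E^\perp$, so it is not a smooth submanifold). The paper's proof has the same imprecision: it concludes only that $(\Uc_E)^c$ ``can be covered by'' images of a lower-dimensional manifold, not that it equals a finite union of closed submanifolds. This does not matter for the paper: the only use of the lemma (immediately after its statement) is the tube estimate $\Leb(\Nc_\eta((\Uc_E)^c)) = O(\eta)$, and that follows directly from either your stratification or the paper's parametrization, since $(\Uc_E)^c$ is compact and covered by finitely many embedded submanifolds of positive codimension.
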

Note that in particular, the Lebesgue measure of $(\Uc_E)^c$ is zero. 

\begin{proof}

Given $E \in \Gr_k(\R^m)$, write $V = E^\perp$ and define
$\Vc = (\Uc_E)^c$, which by point (A) at the beginning of Section \ref{SProp}
 is the set of $k$-dimensional subspaces
intersecting $V$ nontransversally. We will describe $\Vc$ as the image of a 
fiber bundle $\Ec$, to be defined below, via a smooth mapping
$\Phi : \Ec \to \Gr_k(\R^m)$. As we will show, $\dim \Ec < k (m-k) = \dim \Gr_k(\R^m)$, hence
$\Vc$ can be covered by embedded submanifolds of dimension $< k (m-k)$. 

To define $\Phi$ and $\Ec$, we first introduce some notation. 
Given $v \in \R^m$ let $I_v = \{ S \in \Gr_k(\R^m) : v \in S\}$. 
Then, each $S \in I_v$ is uniquely specified by a corresponding $k-1$-dimensional subspace
$S_v := S \cap \langle v \rangle^\perp = (I - \Pi_v) (S)$, where $\Pi_v : \R^m \to \langle v \rangle$ is the orthogonal projection. So, we can (canonically) identify
$I_v \cong \Gr_{k-1}(\langle v \rangle^\perp)$. The latter is essentially $\Gr_{k-1}(\R^{m-1})$ and has dimension $(k-1)(m - 1 - (k-1)) = (k-1)(m -k)$. 

Let $\pi : \mathcal E \to \Gr_1(V)$ denote the fiber bundle
over $\Gr_1(V)$ with fibers $\Gr_{k-1}(\langle v \rangle^\perp)$. Write elements
of $\Ec$ as $(v, \hat S)$, where $v \in V, \hat S \in \Gr_{k-1}(\langle v \rangle^\perp)$. 
We define $\Phi : \mathcal E \to \Gr_m(\R^k)$ to be the sum of subspaces
\[
\Phi(v, \hat S) = \langle v \rangle + \hat S
\]
in $\R^m$. Evidently, the image of $\Phi$ coincides with $\Vc$. Since
$\dim \Ec = (k-1) + (k-1)(m - k) = (k-1)(m - (k - 1))$, it follows that 
$\Vc$ can be covered by finitely many closed submanifolds of dimension
$\leq (k-1)(m - (k-1)) < k (m - k)$. 
\end{proof}

\subsection{Singular value decomposition} \label{subsec:SVD}

Throughout, the parameter $\beta \in (0,1)$ is fixed, as are $n \geq 1$, $\uo \in \Omega$ and
 $z \in G_\beta^n$. We now proceed to study the singular-value decomposition for the iterated Jacobian
$D_z F^n_\uo$. 

\begin{lem}\label{lem:SVD}
Let $\sigma_i =\sigma_i(D_{z} F^n_\omega), \sigma_1 \geq \sigma_2 \geq \cdots \geq \sigma_{2N}$ denote the singular values of $D_{z} F^n_\omega$.
\begin{itemize}
\item[(i)] We have 
\[
\sigma_N \geq L^{n \beta/2} \geq L^{- n \beta/2} \geq \sigma_{N + 1}
\]
\item[(ii)] Let $h_1, \cdots, h_{2N}, h_1', \cdots, h_{2N}'$ denote the orthogonal 
  bases of $\R^{2N}$ for which 
  \[
  D_{z} F^n_\uo h_i = \sigma_i h_i'
  \]
Then, $h_i , h_i' \in \Cc^x_{1/10}$ for $1 \leq i \leq N$ and $h_i, h_i' \in \Cc^y_{1/10}$ for all $N + 1 \leq i \leq 2N$.
\end{itemize}
\end{lem}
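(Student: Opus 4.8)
The plan is to bootstrap from the one-step hyperbolicity estimates (Lemma \ref{lem:bulkHyp}, Lemma \ref{lem:horizPropagate}) to the $n$-step statement, using the cone-invariance to get a clean splitting of $\R^{2N}$ into an `expanding' piece asymptotic to $\R^x$ and a `contracting' piece asymptotic to $\R^y$. First I would establish the expanding side: by Lemma \ref{lem:horizPropagate}(ii), applied at each of the points $Z_0, \dots, Z_{n-1}$ (all of which lie over $G_\beta$ since $z \in G_\beta^n$), the cone $\Cc^x_{1/10}$ is forward-invariant along the orbit, and by part (i) every unit vector in $\Cc^x_{1/10}$ is stretched by at least $L^{\beta/2}$ at each step, so $\|D_z F^n_\uo w\| \ge L^{n\beta/2}\|w\|$ for all $w \in \Cc^x_{1/10}$. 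Since $\Cc^x_{1/10}$ contains an $N$-dimensional subspace (namely $\R^x$ itself), the min-max characterization of singular values gives $\sigma_N(D_z F^n_\uo) \ge L^{n\beta/2}$.

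For the contracting side, the natural move is to pass to the inverse cocycle. Write $D_z F^n_\uo$ as a composition of the $D_{Z_i} F_{\omega_{i+1}}$; each factor is of the form $D R_\omega \circ D F$, and one checks that the transpose (equivalently, the inverse run backwards) maps the vertical cone $\Cc^y_{1/10} = \{(u,v) : \|u\| \le \frac{1}{10}\|v\|\}$ into itself and expands it by a factor $\gtrsim L^{\beta/2}$ — this is the mirror image of Lemma \ref{lem:bulkHyp}(a), using that the block structure $\begin{pmatrix} D_x f & -I \\ I & 0 \end{pmatrix}$ has an inverse $\begin{pmatrix} 0 & I \\ -I & D_x f \end{pmatrix}$ whose lower-left-to-upper block again carries the large factor $D_x f$, together with condition (C) for the $R_\omega$ factors. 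This yields $\sigma_{2N}(D_z F^n_\uo) = m(D_z F^n_\uo) = \|(D_z F^n_\uo)^{-1}\|^{-1} \le L^{-n\beta/2}$, and more generally $\sigma_{N+1}(D_z F^n_\uo) \le L^{-n\beta/2}$ by the min-max applied to the inverse, since the $N$-plane $\R^y$ lies in the vertical cone. This proves (i).

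For (ii), recall that the singular directions $h_i$ are eigenvectors of $(D_z F^n_\uo)^\top (D_z F^n_\uo)$ and the $h_i'$ are eigenvectors of $(D_z F^n_\uo)(D_z F^n_\uo)^\top$, and that $\operatorname{Span}\{h_1,\dots,h_N\}$ is precisely the $N$-plane on which $\|D_z F^n_\uo \cdot\|$ is largest; since the gap in (i) is strict ($L^{n\beta/2}$ versus $L^{-n\beta/2}$), this top $N$-plane is uniquely determined. I would argue it must lie in $\Cc^x_{1/10}$: if $\operatorname{Span}\{h_1,\dots,h_N\}$ had a nonzero component outside $\Cc^x_{1/10}$, then by the cone-dichotomy (a unit vector not in $\Cc^x_\alpha$ has a definite $\R^y$-component, and vectors with a $\R^y$-component get contracted under the inverse-cone estimate, i.e. expanded much less than $L^{n\beta/2}$ forward) one could produce a unit vector in this $N$-plane stretched by strictly less than $L^{n\beta/2}$, contradicting the Courant–Fischer lower bound for $\sigma_N$. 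Hence $h_1,\dots,h_N \in \Cc^x_{1/10}$; applying $D_z F^n_\uo$ and using forward cone-invariance (Lemma \ref{lem:horizPropagate}(ii)) gives $h_1',\dots,h_N' \in \Cc^x_{1/10}$ as well. The statement for $h_{N+1},\dots,h_{2N}$ follows by the same argument applied to the inverse cocycle, with $\Cc^x$ replaced by $\Cc^y$ and `top' replaced by `bottom', using that $\{h_{N+1},\dots,h_{2N}\} = \{h_i\}^{\perp}_{i\le N}$ is the bottom $N$-plane for the forward cocycle, equivalently the top $N$-plane for the inverse.

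The main obstacle I anticipate is making the cone-dichotomy argument for (ii) fully rigorous: one needs a clean quantitative statement that a subspace transverse to $\Cc^x_{1/10}$ (or more precisely, any unit vector with $\R^y$-component bounded below) cannot be expanded by the full factor $L^{n\beta/2}$, and then combine this with orthogonality of the singular basis to pin down that the entire top $N$-plane sits inside the cone rather than just intersecting it. This is where the interplay between the Euclidean orthogonality of the $h_i$ and the (non-orthogonal) cone geometry has to be handled carefully — likely by decomposing a candidate vector along $\R^x \oplus \R^y$ and tracking both components through the iteration, or by appealing to a general linear-algebra lemma on singular value decompositions adapted to invariant cone pairs (cf. Appendix \ref{sec:SVDapp}).
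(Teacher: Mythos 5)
Part (i) is fine and matches the paper: forward cone invariance from Lemma \ref{lem:horizPropagate} plus the min--max for singular values gives $\sigma_N \geq L^{n\beta/2}$, and the same argument for the inverse cocycle (preserving $\Cc^y_{1/10}$) gives $\sigma_{N+1} = 1/\sigma_N\big((D_zF^n_\uo)^{-1}\big) \leq L^{-n\beta/2}$.

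For part (ii) your cone-dichotomy contradiction is set up on the wrong side, and the claim it depends on is false. You assert that a unit vector $w$ not in $\Cc^x_{1/10}$ ``gets contracted under the inverse-cone estimate, i.e.\ expanded much less than $L^{n\beta/2}$.'' But take $w=(u,v)$ with $\|u\|\approx\|v\|\approx 1/\sqrt2$; such $w$ lies well outside $\Cc^x_{1/10}$, yet $D_zF(w)=(D_xf(u)-v,u)$ has first component of size $\gtrsim L^\beta$, so $D_zF(w)$ lands deep inside $\Cc^x_{1/10}$ and the remaining $n-1$ iterates expand it by $\gtrsim L^{(n-1)\beta/2}$. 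Vectors strictly between the two invariant cones are thus \emph{strongly} expanded, not contracted, and no contradiction with $\sigma_N\geq L^{n\beta/2}$ arises. The lower bound $\sigma_N\geq L^{n\beta/2}$ already applies to every unit vector in $\operatorname{Span}\{h_1,\dots,h_N\}$ by definition of $\sigma_N$, so knowing that some such vector is expanded by at least $L^{n\beta/2}$ cannot tell you it lies in the cone.

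There are two ways to close the gap. The paper's route is to observe (by a ``mild variation'' of Lemma \ref{lem:horizPropagate}) that not only $D_zF^n_\uo$ but also $(D_zF^n_\uo)^\top$ carries $N$-planes in $\Cc^x_{1/10}$ into $\Cc^x_{1/10}$; then $(D_zF^n_\uo)^\top D_zF^n_\uo$ and $D_zF^n_\uo (D_zF^n_\uo)^\top$ are self-adjoint cone-preserving matrices, and Lemma \ref{coneSVD} in Appendix \ref{sec:SVDapp} gives directly that the top $N$ eigenvectors (i.e.\ $h_1,\dots,h_N$ and $h_1',\dots,h_N'$) lie in the cone, with the bottom $N$ in $\Cc^y_{1/10}$. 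You mention this lemma only in passing, but it is the crux, and the transpose cone-invariance is the extra input your outline is missing. Alternatively, your dichotomy idea \emph{does} work if run on the contracting side: every unit vector $w\notin\Cc^y_{1/10}$ has $\|\Pi^x w\|\geq 1/\sqrt{101}$, so $\|D_zF^n_\uo w\|\gtrsim L^{(n-1)\beta/2+\beta}\gg L^{-n\beta/2}$, which forces $\operatorname{Span}\{h_{N+1},\dots,h_{2N}\}\subset\Cc^y_{1/10}$; and a short linear-algebra check shows that the orthogonal complement of an $N$-plane contained in $\Cc^y_{1/10}$ is contained in $\Cc^x_{1/10}$, giving $\operatorname{Span}\{h_1,\dots,h_N\}\subset\Cc^x_{1/10}$. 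So the flaw is fixable, but not by the quantitative statement you were hoping to prove --- that one is simply not true.
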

For $\alpha > 0$, we have written 
$\Cc^y_\alpha = \{ (u,v) \in \R^{2N} : \| u \| \leq \alpha \| v \|\}$ for the cone of
vectors roughly parallel to $\R^y$. 

\begin{proof}
It follows from Lemma \ref{lem:horizPropagate}(ii) that for any $E \in \Gr_N(\R^{2N}), E \subset \Cc^x_{1/10}$ that
\[
D_{Z_0} F^n_\uo ( E) \subset \Cc^x_{1/10} \, .
\]
A mild variation of the arguments for Lemma \ref{lem:horizPropagate} similarly implies that
 $(D_{Z_0} F^n_\uo)^\top (E) \subset \Cc^x_{1/10}$. The same then holds for $(D_{Z_0} F^n_\uo)^\top D_{Z_0} F^n_\uo$ and $D_{Z_0} F^n_\uo (D_{Z_0} F^n_\uo)^\top$. 
 Since each of the latter two are self-adjoint, it follows that
the top $N$ eigenvectors for $(D_{Z_0} F^n_\uo)^\top D_{Z_0} F^n_\uo$ (resp. $D_{Z_0} F^n_\uo (D_{Z_0} F^n_\uo)^\top$) span an $N$-dimensional subspace $E \subset \Cc_{1/10}^x$ (resp. $E' \subset \Cc^x_{1/10}$); see Lemma \ref{coneSVD} in Appendix \ref{sec:SVDapp}. 
Moreover, we have $D_{Z_0} F^n_\uo(E)  =E'$. 
That the remaining eigenvectors of 
$(D_{Z_0} F^n_\uo)^\top D_{Z_0} F^n_\uo$ (resp. $D_{Z_0} F^n_\uo (D_{Z_0} F^n_\uo)^\top$)
lie in $\Cc^y_{1/10}$ now follows. 

Lastly, for all $v \in \Cc^x_{1/10}$, we have $\| D_{Z_0} F^n_\uo v\| \geq L^{n \beta/2}$ 
by Lemma \ref{lem:horizPropagate}(i), which yields the estimate
$\sigma_N(D_{Z_0} F^n_\uo) \geq L^{n \beta / 2}$. Similarly, as one can check, 
$\| (D_{Z_0} F^n_\uo)^{-1} w\| \geq L^{n \beta / 2}$ for all $w \in \Cc^y_{1/10}$,
hence $\sigma_{N + 1}(D_{Z_0} F^n_\uo) \leq L^{- n \beta / 2}$. 
This completes the proof.
\end{proof}

\subsection{The proof of Proposition \ref{prop:estLebMass}}\label{subsec:proveMainProp}

\newcommand{\Span}{\operatorname{Span}}

Define
\begin{align*}
\Hc = \Hc(D_z F^n_\uo) = \Span\{ h_1, \cdots, h_N\} \\
\Hc' = \Hc'(D_z F^n_\uo) = \Span\{ h_1', \cdots, h_N'\} \, , 
\end{align*}
noting that
$\Hc^\perp = \Span\{ h_{N + 1}, \cdots, h_{2N}\}, 
(\Hc')^\perp = \Span\{ h_{N + 1}', \cdots, h_{2N}'\}$. 
By Lemma \ref{lem:SVD}, we have that $\Hc, \Hc' \subset \Cc^x_{1/10}$. 
Below, given $\eta > 0$ and $S \subset \Gr_N(\R^{2N})$, 
we write $\Nc_\eta(S)$ for the (open) 
$\eta$-neighborhood of $S$ with respect to the geodesic distance $d_{geo}$. 

\begin{lem}\label{lem:containmentGrassman}
There exists a universal constant $c > 0$ depending only on $N$ such that
\begin{align}
\{ E \in \Gr_N(\R^{2N}) : D_z F^n_\uo(E) \text{ is not contained in } \Cc^x_2\} 
\subset
\Nc_\eta((\Uc_\Hc)^c) \, , 
\end{align}
where $\eta = c L^{- \beta n}$. 
\end{lem}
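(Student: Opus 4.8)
The plan is to argue by contraposition: I will show that if $E \in \Gr_N(\R^{2N})$ lies at geodesic distance greater than $\eta = c L^{-\beta n}$ from $(\Uc_\Hc)^c$, for a universal constant $c$ to be chosen, then $D_z F^n_\uo(E) \subset \Cc^x_2$. Since $(\Uc_\Hc)^c$ is closed, such an $E$ must lie in the chart $\Uc_\Hc$, so $E = \graph_\Hc G$ for some $G \in L(\Hc, \Hc^\perp)$. The argument then has two parts: (i) a \emph{geometric} step, converting ``$E$ is $\eta$-far from $(\Uc_\Hc)^c$'' into the quantitative bound $\|G\| \lesssim 1/\eta \sim L^{\beta n}$; and (ii) a \emph{hyperbolicity} step, showing that $\|G\| \lesssim L^{\beta n}$ forces $D_z F^n_\uo(E) \subset \Cc^x_2$.

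For step (ii), write $A = D_z F^n_\uo$ and recall from Lemma~\ref{lem:SVD} and its proof that, in the singular value decomposition $A h_i = \sigma_i h_i'$, we have $A(\Hc) = \Hc'$, $A(\Hc^\perp) = (\Hc')^\perp$, that $\Hc' \subset \Cc^x_{1/10}$ and $(\Hc')^\perp \subset \Cc^y_{1/10}$, and that $\sigma_N \geq L^{n\beta/2} \geq L^{-n\beta/2} \geq \sigma_{N+1}$. Given $v = u + Gu \in E$ with $u \in \Hc$, decompose $Av = Au + A(Gu)$ into its $\Hc'$-component $Au$ and its $(\Hc')^\perp$-component $A(Gu)$. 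Since $u$ lies in the span of the top $N$ right singular vectors and $Gu$ in the span of the bottom $N$, we get $\|Au\| \geq \sigma_N \|u\| \geq L^{n\beta/2}\|u\|$ and $\|A(Gu)\| \leq \sigma_{N+1}\|Gu\| \leq L^{-n\beta/2}\|G\|\,\|u\|$, so the $(\Hc')^\perp$-component of $Av$ is at most $L^{-n\beta}\|G\|$ times its $\Hc'$-component. An elementary cone computation, using only $\Hc' \subset \Cc^x_{1/10}$ and $(\Hc')^\perp \subset \Cc^y_{1/10}$, then produces a universal $\epsilon_0 \in (0,1)$ such that $L^{-n\beta}\|G\| \leq \epsilon_0$ implies $Av \in \Cc^x_2$; since $v \in E$ was arbitrary, $A(E) \subset \Cc^x_2$.

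Step (i) is the main technical point, and I expect it to be the principal obstacle. I would use the singular value decomposition of $G$: write $G e_i = g_i f_i$, where $\{e_i\}$ and $\{f_i\}$ are orthonormal bases of $\Hc$ and $\Hc^\perp$ respectively and $\|G\| = g_1 \geq g_2 \geq \cdots \geq g_N \geq 0$; then $\{(e_i + g_i f_i)/\sqrt{1+g_i^2}\}_{i=1}^N$ is an orthonormal basis of $E$. Replacing the $i=1$ vector by $f_1 \in \Hc^\perp$ yields an $N$-plane $E_0$ which is again spanned by an orthonormal set and which contains $f_1 \in \Hc^\perp$, hence $E_0 \in (\Uc_\Hc)^c$. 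As $E$ and $E_0$ share the orthonormal vectors indexed by $i \geq 2$, and the two differing unit vectors $(e_1 + g_1 f_1)/\sqrt{1+g_1^2}$ and $f_1$ are each orthogonal to that common $(N-1)$-plane, a direct computation gives $d_H(E, E_0) \leq \sin\angle\big((e_1 + g_1 f_1)/\sqrt{1+g_1^2},\, f_1\big) = (1+\|G\|^2)^{-1/2}$. By Lemma~\ref{lem:unifEquivDistances}, $d_{geo}(E, (\Uc_\Hc)^c) \leq \frac{\pi}{2} d_H(E, E_0) \leq \frac{\pi}{2}\|G\|^{-1}$ whenever $\|G\| \geq 1$; contrapositively, $d_{geo}(E, (\Uc_\Hc)^c) > \frac{\pi}{2\epsilon_0} L^{-\beta n}$ forces $\|G\| < \epsilon_0 L^{\beta n}$, and step (ii) applies. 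Choosing $c = \frac{\pi}{2\epsilon_0}$, and disposing of the trivial case $E \in (\Uc_\Hc)^c$ (where the containment in $\Nc_\eta((\Uc_\Hc)^c)$ is immediate), completes the proof. The delicate point is precisely the construction of the nearby degenerate subspace $E_0$ and the clean distance estimate: doing this directly in geodesic coordinates on $\Gr_N(\R^{2N})$ is awkward, so passing through the Hausdorff metric $d_H$ and the SVD of $G$ seems essential.
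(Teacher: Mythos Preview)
Your proof is correct and rests on the same two ingredients as the paper's: the singular value gap $\sigma_N \geq L^{n\beta/2} \geq L^{-n\beta/2} \geq \sigma_{N+1}$ from Lemma~\ref{lem:SVD}, and the construction of a nearby plane in $(\Uc_\Hc)^c$ by swapping one basis vector for an element of $\Hc^\perp$, followed by the passage between $d_H$ and $d_{geo}$ from Lemma~\ref{lem:unifEquivDistances}.

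The organization, however, is genuinely different. You argue contrapositively in graph coordinates: represent $E = \graph_\Hc G$, use the SVD of $G$ to build the degenerate comparison plane $E_0$ and extract the clean estimate $d_H(E,E_0) = (1+\|G\|^2)^{-1/2}$, and then push forward using the block structure of $A$ on $\Hc \oplus \Hc^\perp$. The paper instead argues directly: it picks a witness unit vector $v' \in E' \setminus \Cc^x_2$, decomposes it along $\Hc' \oplus (\Hc')^\perp$ (so $\|v'_\perp\| \approx 1$ since $\Hc' \subset \Cc^x_{1/10}$), pulls back via $(D_z F^n_\uo)^{-1}$ to obtain a unit $v \in E$ with $d(v,\Hc^\perp) = O(L^{-\beta n})$, and replaces $v$ in a basis of $E$ by its normalized $\Hc^\perp$-component $\hat v_\perp$ to produce $\hat E \in (\Uc_\Hc)^c$ with $d_H(E,\hat E) = O(L^{-\beta n})$. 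Your approach is more structural and makes the constant $c$ essentially explicit; the paper's single-vector argument is somewhat shorter and never needs to put $E$ into chart coordinates.
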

Proposition \ref{prop:estLebMass} follows, since 
$(\Uc_\Hc)^c$ is the finite union of a collection of closed 
submanifolds of $\Gr_N(\R^{2N})$  
(Lemma \ref{lem:complementChartDescription}). Here, we use the standard
fact that if $M' \subset M$ is a closed submanifold of a compact 
Riemannian manifold $M$ with strictly positive codimension, then the Lebesgue measure 
of any neighborhood $\Nc_\eta(M')$ is $\leq C \eta$, where $C > 0$ depends only on $M$. 

\begin{proof}[Proof of Lemma \ref{lem:containmentGrassman}]
Let $E \in \Gr_N(\R^{2N})$ be such that $E' := D_x F^n_\uo(E)$ is not contained in 
$\Cc^x_2$. Let $v' \in E' \setminus \Cc^x_2$; since $\Cc^x_2$ is a cone and $E'$ is a subspace,
we can assume without loss that $v'$ is a unit vector. 

Now, let 
\[
v' = v'_{\|} + v'_\perp \, , 
\]
where $v'_\| \in \Hc', v'_\perp \in (\Hc')^\perp$. Since $v' \notin \Cc^x_2$ and 
$\Hc' \subset \Cc^x_{1/10}$, it follows that $\| v'_\perp\| \approx 1$.
Now, let $v = (D_z F_\uo^n)^{-1} (v') / \|(D_z F_\uo^n)^{-1} (v')\|$, so that
\[
v = v_\| + v_\perp = \frac{(D_z F^n_\uo)^{-1}(v'_\|)}{\| (D_z F^n_\uo)^{-1}(v')\|} + 
\frac{(D_z F^n_\uo)^{-1}(v'_\perp)}{\| (D_z F^n_\uo)^{-1}(v')\|} 
\]
since $D_z F^n_\uo(\Hc) = \Hc', D_z F^n_\uo(\Hc^\perp) = (\Hc')^\perp$
(see Appendix \ref{sec:SVDapp}). 
To estimate these components, we have
\begin{gather*}
\| (D_z F^n_\uo)^{-1} (v'_\|)\| \leq (\sigma_N(D_z F^n_\uo))^{-1} \| v'_\| \| \leq 
L^{- \beta n/2}  \, , \\
\| (D_z F^n_\uo)^{-1} (v'_\perp)\| \geq (\sigma_{N + 1}(D_z F^n_\uo))^{-1} \| v'_\perp\| 
 \gtrsim L^{\beta n/2}
\end{gather*}
using that $\| v_\perp'\| \approx 1$. Continuing, 
\begin{align*}
 \| (D_z F^n_\uo)^{-1} (v')\| \geq \| (D_z F^n_\uo)^{-1} (v'_\perp)\| - \| (D_z F^n_\uo)^{-1} (v'_\|)\| 
 \gtrsim L^{\beta n/2} - L^{- \beta n/2} \gtrsim L^{\beta n/2} \, .
\end{align*}
Thus, we conclude that $\| v_\| \| \lesssim L^{-  \beta n}$, while
$1 - \| v_\perp\| = O(L^{-  \beta n})$. This immediately implies
that 
\begin{align}\label{eq:distHcPerp}
d(v, \Hc^\perp) \leq \left\| v - \hat v_\perp \right\| = O(L^{-  \beta n}) \, ,
\end{align}
where $\hat v_\perp = \frac{v_\perp}{\| v_\perp\|} \in \Hc^\perp$ is a unit vector. 

Fix a basis $w_1, \cdots, w_{N-1}$ for the orthogonal complement of $v$ in $E$
and define $$\hat E := \Span\{ \hat v, w_1, \cdots, w_{N-1}\}.$$ It follows from 
\eqref{eq:distHcPerp} that $d_H(\hat E, E) \lesssim L^{-  \beta n}$. Since 
$\hat E \in (\Uc_\Hc)^c$ and $d_H, d_{geo}$ are uniformly equivalent
by Lemma \ref{lem:unifEquivDistances}, we conclude that
$d_{geo}(E, (\Uc_{\Hc})^c) \lesssim L^{- \beta n}$. 
\end{proof}

\section{Proof of Theorems \ref{ThmStd} and \ref{thm:strongCoupling}}
\label{sec:app}

\subsection{Proof of Theorem \ref{ThmStd}}
It suffices to check condition (F2) holds for the family
\[
f_L(x_1, \cdots, x_N) = ( 2 x_i + L \sin(2 \pi x_i) + \sum_{j \neq i} \mu_{ij} \sin 2 \pi (x_j - x_i) \, , 
\]
where $(\mu_{ij})$ is a fixed family of coefficients. 

Write $f = (f^1, \cdots, f^N)$ in component form. 
By the Leibniz formula for the determinant,
\[
\det L^{-1} D_x f = L^{-N} \sum_{\tau \in S_N} \prod_{i = 1}^N \frac{\pd f^i}{\pd x_{\tau(i)}}
\]
where the outer summation is over the permutations $\tau \in S_N$ of the set $\{ 1, \cdots, N\}$. The dominant term
is the product $\prod_{i = 1}^N L \cos 2 \pi x_i$; precisely, 
\[
\det L^{-1} D_x f - (2 \pi)^N \prod_{i =1}^N \cos 2 \pi x_i = O(L^{-1})
\]
when $L$ is taken sufficiently large relative to $\max_{ij} \mu_{ij}$. Therefore
\[
B_\beta \subset \left\{ \left| \prod_{i = 1}^N \cos 2 \pi x_i\right|  \leq L^{2\beta - 1}\right\}
\]
for $L$ sufficiently large. To estimate the volume of the RHS, we bound $|\cos 2 \pi x| \geq 4 \min\{ |x - 1/4|, |x - 3/4|\}$, so that
$|\prod_{i = 1}^N \cos 2 \pi x_i | \geq \min\{ \prod_{i = 1}^N | x_i - r_i | \}$, where the minimum is taken over all possible configurations of
$r_i \in \{ 1/4, 3/4\}, 1 \leq i \leq N$. Thus,
\[
B_\beta \subset \bigcup \left\{ \prod_{i = 1}^N | x_i - r_i | \leq L^{2 \beta - 1} \right\}
\]
where the union is again over all possible configurations of the $r_i$. We conclude that 
\[
\Leb B_\beta \lesssim \Leb \{ (x_1, \cdots, x_N) \in [0,1]^N : \prod_i x_i \leq L^{2 \beta  - 1} \} \, .
\]
For the RHS, we have the asymptotic $\lesssim L^{3 \beta - 1}$ (see Lemma \ref{lem:areaComputation} below). Therefore, condition (F2)
is satisfied with $c_\beta = 1 - 3 \beta$. 

\begin{lem}\label{lem:areaComputation}
Define $S_N(\d)  = \{ (x_1, \cdots, x_N) \in [0,1]^N : \prod_i x_i \leq \d\}$. Then, For any $\d > 0$, we have
\[
\Leb S_N(\d) = \d \sum_{i = 0}^{N-1} \frac{( - \log \d)^i}{i!}
\]
\end{lem}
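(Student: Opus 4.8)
The plan is to prove the identity by induction on $N$, slicing the cube along the last coordinate and applying Fubini's theorem. Throughout we may assume $\delta \in (0,1]$, since for $\delta \ge 1$ one has $S_N(\delta) = [0,1]^N$ and the claimed sum collapses to its $i = 0$ term. For the base case $N = 1$, the set $S_1(\delta) = [0,\delta]$ has length $\delta = \delta \cdot (-\log\delta)^0 / 0!$, as required.

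For the inductive step, assume the formula holds for $N$. Writing a point of $[0,1]^{N+1}$ as $(x, x_{N+1})$ with $x \in [0,1]^N$, Fubini gives
\[
\Leb S_{N+1}(\delta) = \int_0^1 \Leb \Big\{ x \in [0,1]^N : \textstyle\prod_{i=1}^N x_i \le \tfrac{\delta}{x_{N+1}} \Big\} \, dx_{N+1}.
\]
When $x_{N+1} < \delta$ the constraint is vacuous and the inner measure equals $1$; when $x_{N+1} \ge \delta$ we have $\delta / x_{N+1} \le 1$ and may apply the inductive hypothesis to $S_N(\delta/x_{N+1})$. Hence
\[
\Leb S_{N+1}(\delta) = \delta + \int_\delta^1 \frac{\delta}{x_{N+1}} \sum_{i=0}^{N-1} \frac{\big(-\log(\delta/x_{N+1})\big)^i}{i!} \, dx_{N+1}.
\]
The substitution $t = \log(x_{N+1}/\delta)$, under which $x_{N+1} = \delta e^t$, $dx_{N+1} = \delta e^t \, dt$, and $-\log(\delta/x_{N+1}) = t$, turns the integral into
\[
\delta \int_0^{-\log\delta} \sum_{i=0}^{N-1} \frac{t^i}{i!} \, dt = \delta \sum_{i=0}^{N-1} \frac{(-\log\delta)^{i+1}}{(i+1)!} = \delta \sum_{j=1}^{N} \frac{(-\log\delta)^{j}}{j!}.
\]
Adding the leading term $\delta = \delta (-\log\delta)^0 / 0!$ yields exactly $\delta \sum_{j=0}^{N} (-\log\delta)^j / j!$, completing the induction.

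The computation is entirely elementary; the only point requiring genuine care is the case split in the Fubini slice according to whether $x_{N+1}$ is larger or smaller than $\delta$, since the inductive hypothesis only applies when the argument of $S_N$ is at most $1$. As an alternative (non-inductive) viewpoint, one can recognize $\Leb S_N(\delta) = \mathbb{P}(U_1 \cdots U_N \le \delta)$ for i.i.d.\ uniforms $U_i$ on $[0,1]$; since the $-\log U_i$ are i.i.d.\ $\mathrm{Exp}(1)$, the quantity $-\log(U_1 \cdots U_N)$ has a $\mathrm{Gamma}(N,1)$ distribution, and the stated formula is precisely its survival function evaluated at $-\log\delta$.
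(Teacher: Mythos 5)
Your proof is correct, and it takes a genuinely different route from the paper's. The paper avoids induction: it partitions $S_N(\delta)$ according to the first index $J$ at which the running product $x_1\cdots x_j$ drops below $\delta$, observes that $\Leb(S_N(\delta)\cap\{J=j\}) = \Leb(S_j(\delta)\cap\{J=j\})$, and then computes $\Leb(S_N(\delta)\cap\{J=N\}) = \frac{\delta}{(N-1)!}(-\log\delta)^{N-1}$ via an $(N-1)$-fold iterated integral; summing over $j$ recovers the series. Your approach instead inducts on the dimension, slicing off the last coordinate with Fubini and applying the inductive hypothesis to the remaining $N$ coordinates with a rescaled threshold $\delta/x_{N+1}$. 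Both are short and elementary; the paper's ``first hitting index'' decomposition is a one-shot argument that produces the $N$ terms of the sum all at once, whereas your induction produces them one at a time, with the base case supplying the $i=0$ term and each inductive step shifting indices by one via $\int_0^{-\log\delta} t^i/i!\,dt = (-\log\delta)^{i+1}/(i+1)!$. One small advantage of your version is that the case split $x_{N+1}<\delta$ versus $x_{N+1}\ge\delta$ is transparent and immediately justifies applying the hypothesis only where $\delta/x_{N+1}\le 1$; in the paper's proof the corresponding care is absorbed into the definition of $J$. The closing probabilistic remark --- that this is the survival function of a $\mathrm{Gamma}(N,1)$ distribution evaluated at $-\log\delta$ --- is a nice sanity check not present in the paper, though of course it presupposes the same computation in disguise.
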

\begin{proof}
Define
\[
J = J(x_1, \cdots, x_N) := \min\{1 \leq j \leq N : 0 \leq x_j \leq \frac{\d}{x_1 \cdots x_{j-1}}\} \, ,
\]
and note that $J$ is defined on $S_N(\d)$. In particular, 
\[
\{ J = j \} \cap S_N(\d) = \left\{ \frac{\d}{\prod_{i \leq \ell-1} x_i} \leq x_\ell \leq 1 \text{ for all } 1 \leq \ell \leq j-1, 
0 \leq x_j \leq \frac{\d}{x_1 \cdots x_{j-1}}
\right\}
\]
with no constraint on $x_{j + 1}, \cdots, x_N$. Thus $\Leb(S_N(\d) \cap \{ J = j \}) = \Leb(S_j(\d) \cap \{ J = j\})$ for all $j$.
It therefore suffices to compute $\Leb(S_N(\d) \cap \{ J = N\})$. 
This is given by
\[
\int_{x_1 = \d}^1 \int_{x_2 = \frac{\delta}{x_1}}^1 \cdots \int_{x_{N-1} = \frac{\delta}{x_1 \cdots x_{N-2}}}^1 \frac{\delta}{x_1 \cdots x_{N-1}}
dx_{N-1} \cdots dx_1
\]
For $c \in (0,1), k \geq 0$, we have
\[
\int_{y = c}^1 \frac{c}{y} \left( \log \frac{c}{ y} \right)^k dy = -\int_{\log \frac{c}{y} = 0}^{\log c} \left( \log \frac{c}{ y} \right)^k d \log \frac{c}{y}
= - \frac{c}{k +1} (\log c)^{k+1} \,. 
\]
Thus, after $k$ iterated integrals, we have
\begin{gather*}
\int_{x_{N-k} = \frac{\d}{x_1 \cdots x_{N-k-1}}}^1 \cdots \int_{x_{N-1} = \frac{\d}{x_1 \cdots x_{N-2}}}^1 \frac{\d}{x_1 \cdots x_{N-1}} dx_{N-1} \cdots dx_{N-k}
= \\ 
\frac{(-1)^{k}}{k!} \frac{\d}{x_1 \cdots x_{N-k-1}} \left( \log \frac{\d}{x_1 \cdots x_{N-k-1}} \right)^{k}
\end{gather*}
and after $k = N-1$ such integrals, we deduce $\Leb( S_N(\d) \cap \{ J = N\}) = \frac{\d}{(N-1)!} ( - \log \d)^{N-1}$. This completes the proof. 
\end{proof}

\subsection{Transversality criterion and genericity for (F2) }
\subsubsection{Transversality criterion for (F2)}
Below, we derive the general transversality criterion 
\eqref{eq:transConditionIntro} for condition (F2) 
for families of the form $f_L = L \psi + \varphi$. 

\begin{lem}[Transversality Criterion]\label{lem:F2Generic}
Let $\psi, \varphi : \T^N \to \R^N$ be $C^2$ mappings and assume that 
\begin{align}\label{eq:psiTransCond}
\{ \det D_x \psi = 0 \} \cap \{ D_x \det D \psi = 0 \} = \emptyset \, .
\end{align}
Then, $f_L = L \psi + \varphi$ satisfies condition (F2) with $c_\beta = 1-\beta$ 
for all $L$ sufficiently large. Precisely, 
for any $\beta > 0$, there exists $C_\beta = C_\beta(\psi, \varphi)$ 
so that for any $L$ sufficiently large (in terms of $\psi, \varphi$), we have that
\[
\Leb \{ \det (L^{-1} D_x f_L) \leq L^{- (1 - \beta)} \} \leq C_\beta  L^{- (1 - \beta)} \, . 
\]
\end{lem}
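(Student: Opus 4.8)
The plan is to reduce the estimate for $f_L$ to a sublevel-set volume bound for the single $C^1$ function $g := \det D_x \psi$ on $\T^N$, and then to use the transversality hypothesis \eqref{eq:psiTransCond} to control $\Leb\{|g| \le \delta\}$ for small $\delta > 0$. First I would observe that $L^{-1} D_x f_L = D_x \psi + L^{-1} D_x \varphi$, and that $\|D_x \psi\|$ and $\|D_x \varphi\|$ are bounded uniformly on the compact manifold $\T^N$ since $\psi, \varphi \in C^2$. The determinant is a degree-$N$ polynomial in the matrix entries, hence Lipschitz on any bounded set of matrices, so there is a constant $C = C(\psi, \varphi)$ with $|\det(L^{-1} D_x f_L) - g(x)| \le C L^{-1}$ for all $x \in \T^N$ and all $L \ge 1$. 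Because $1 - \beta < 1$, once $L$ is large we have $C L^{-1} \le L^{-(1-\beta)}$, so the set $B_\beta = \{ |\det(L^{-1} D_x f_L)| \le L^{-(1-\beta)} \}$ appearing in (F2) satisfies $B_\beta \subset \{ |g| \le 2 L^{-(1-\beta)} \}$. Hence it suffices to prove a bound of the form $\Leb\{|g| \le \delta\} \le C_\beta' \delta$ valid for all sufficiently small $\delta > 0$, and then set $\delta = 2 L^{-(1-\beta)}$.

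To prove this sublevel bound, note that since $\psi \in C^2$, the function $g = \det D_x \psi$ is $C^1$ on $\T^N$, so its zero set $Z := \{g = 0\}$ is compact, and by \eqref{eq:psiTransCond} $\nabla g \ne 0$ at every point of $Z$. By continuity of $\nabla g$ and compactness of $Z$ there is an open neighbourhood $U \supset Z$ and a constant $c_0 > 0$ with $|\nabla g| \ge c_0$ on $U$; and since $g$ is continuous and nonvanishing on the compact set $\T^N \setminus U$, there is $\delta_0 > 0$ with $|g| \ge \delta_0$ there, so $\{|g| \le \delta\} \subset U$ whenever $\delta < \delta_0$. Now cover $\overline{U}$ by finitely many small convex coordinate boxes $B_1, \dots, B_m$ so that on each $B_k$ some fixed partial derivative $\partial g / \partial x_{i(k)}$ has modulus $\ge c_0 / \sqrt{N}$; being continuous and nonvanishing it keeps a constant sign on $B_k$, so along every segment in $B_k$ parallel to the $x_{i(k)}$-axis the function $g$ is strictly monotone with derivative of modulus $\ge c_0/\sqrt{N}$. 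Consequently the set of $x_{i(k)}$ on such a segment with $|g| \le \delta$ is an interval of length $\le 2\sqrt{N}\, \delta / c_0$; integrating this over the (bounded) projection of $B_k$ onto the remaining coordinates and applying Fubini gives $\Leb(\{|g| \le \delta\} \cap B_k) \le C_k \delta$. Summing over $k$ and using $\{|g| \le \delta\} \subset U \subset \bigcup_k B_k$ for $\delta < \delta_0$ yields $\Leb\{|g| \le \delta\} \le \big(\sum_k C_k\big)\, \delta$, which is the desired bound.

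I expect no serious obstacle here: the argument is essentially routine once one has the reduction to $g$. The one point deserving care is regularity — $\psi$ is only assumed $C^2$, so $g = \det D_x\psi$ is only $C^1$, and one should therefore avoid any tool (coarea with level-set curvature bounds, Morse- or Sard-type genericity, etc.) that would require $g \in C^2$; the monotonicity/Fubini argument above uses nothing beyond continuity of $\nabla g$ and a lower bound on one first partial derivative. The constant $C_\beta = C_\beta(\psi, \varphi)$ produced is built from $c_0$, $\delta_0$, the finite cover $\{B_k\}$, and the $C^1$-size of $\psi$ (through the quantity $C(\psi,\varphi)$ of the first step), exactly as asserted; tracking it is harmless bookkeeping.
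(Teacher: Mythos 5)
Your proof is correct and follows essentially the same route as the paper: reduce the sublevel-set bound for $\det(L^{-1}D_x f_L)$ to one for $g = \det D_x\psi$ via a Lipschitz/perturbation estimate on the determinant, then use transversality to get linear volume decay of $\{|g|\le\delta\}$. The only difference is presentational: the paper invokes the constant rank theorem to get $\Leb\{|g|\le\epsilon\}\le \hat C\epsilon$ and uses the adjugate-formula estimate $\det(A+\eta B)\ge\det A - C_{A,B}\eta$, while you spell out the sublevel-set bound directly via compactness, a uniform gradient lower bound near $Z$, and a monotonicity-plus-Fubini argument in coordinate boxes, and replace the adjugate estimate by the Lipschitz property of $\det$ on bounded matrix sets — both choices are sound and equivalent in strength for this purpose.
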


\begin{rmk}
Note that $\psi(x) = (\sin 2 \pi x_i)_i$ does \emph{not} satisfy the
transversality condition \eqref{eq:psiTransCond}; this is why we had to check
(F2) by hand in the proof of Theorem \ref{ThmStd}. However, \eqref{eq:psiTransCond}
does hold for a large class of models: as we check below in Proposition \ref{prop:F2genericN2},
it is satisfied by a $C^2$ generic set of $\psi$. 
\end{rmk}

\begin{proof}
We begin with the following straigthforward consequence of the constant rank theorem applied 
$x \mapsto \det D_x \psi$: there exist $\hat C > 0$ with the property that for any
$0 \leq \epsilon \leq \hat \epsilon(\psi)$, we have
\[
\Leb\{ \det D_x \psi \leq \epsilon \} \leq \hat C  \epsilon \, .
\]
We will also need the following estimate: if $A, B$ are $N \times N$ matrices,
then there exists $C_{A,B} > 0$, depending only on $\max |A_{ij}|, \max |B_{ij}|$, and $N$, such that
\[
\det(A + \eta B) \geq \det(A) - C_{A,B} \eta \, .
\]
This can be obtained, e.g., from the formula
\[
\det(A + \eta B) - \det(A) = \int_0^\eta \Tr\big( \operatorname{Adj}(A + s B) B \big) ds \, ,
\]
where $\operatorname{Adj}(\cdot)$ denotes the adjugate of a square matrix. 
With this notation, define $\tilde C = \sup_x C_{D_x \psi, D_x \varphi}$.

To complete the proof, let $x \in \T^N$ be such that $\det D_x \psi \geq 2 L^{- (1 - \beta)}$. 
Then, 
\[
\det D_x (\psi + L^{-1} \phi) \geq \det D_x \psi - \tilde C L^{-1}\geq L^{- (1 - \beta)}
\]
if $L^\beta \gg \tilde C$. Thus, $\{ \det D_x \psi \geq 2 L^{- (1 - \beta)}\} \subset
\{ \det (L^{-1} D_x f_L) \geq L^{- (1 - \beta)}\}$. Taking complements, 
we conclude that 
\[
\Leb\{ \det (L^{-1} D_x f_L) \leq L^{- (1 - \beta)} \} 
\leq \Leb\{ \det D_x \psi \leq 2 L^{- (1 - \beta)}\} \leq 2 \hat C L^{- (1 - \beta)}
\]
on taking $L$ large enough so that $2 L^{- (1 - \beta)} \ll \hat \epsilon(\psi)$. The proof
is complete on setting $C_\beta = 2 \hat C$. 
\end{proof}
\subsubsection{Genericity of (F2) when $N = 2$}

In this subsection, we consider genericity of the transversality condition \eqref{eq:psiTransCond} 
used to prove property (F2). We expect that \eqref{eq:psiTransCond} is generic in general. 
For simplicity, however, we prove this only in the special case $N = 2$. 

 \begin{prop}\label{prop:F2genericN2}There is a residual set $\mathcal R$ in $C^r(\T^2,\R^2),\ r\geq 2$ such that for all $\psi\in \mathcal R$, equation \eqref{eq:psiTransCond} holds, i.e., we have  that $0$ is a regular value of $x \mapsto \det D_x\psi$.
\end{prop}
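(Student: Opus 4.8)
\emph{Overview.} The plan is to identify the claimed residual set $\mathcal R$ with $\{\psi : 0\text{ is a regular value of }x\mapsto\det D_x\psi\}$ and to show it is both $C^r$-open and $C^r$-dense, hence residual. Throughout I write $h_\psi(x) := \det D_x\psi$, a $C^{r-1}$ function on $\T^2$; by definition $0$ is a regular value of $h_\psi$ exactly when \eqref{eq:psiTransCond} holds. Openness is soft, and density will follow from two applications of the parametric transversality theorem corresponding to the two strata of $M_{2\times 2}(\R)$ along which $\det$ degenerates.

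\emph{Openness.} If $0$ is a regular value of $h_\psi$, then the compact set $\{h_\psi = 0\}\subset\T^2$ has an open neighborhood $U$ on which $|\nabla h_\psi|\geq c>0$, while $|h_\psi|\geq c'>0$ on $\T^2\setminus U$. Since $h_\psi$ and $\nabla h_\psi$ depend continuously, in the $C^0$ topology, on $\psi\in C^r(\T^2,\R^2)$ with $r\geq 2$, both lower bounds persist for all $\psi$ in a $C^r$-neighborhood. Hence $\mathcal R$ is $C^r$-open.

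\emph{Density.} To approximate a given $\psi\in C^r$ by elements of $\mathcal R$, I first replace it by a nearby $C^\infty$ map and then perturb within $C^\infty(\T^2,\R^2)$, so I may work entirely in the $C^\infty$ category, where the parametric transversality theorem applies with no bookkeeping of differentiability. First, the evaluation $(\psi,x)\mapsto D_x\psi\in M_{2\times 2}(\R)\cong\R^4$ is a submersion: given $x_0$ and any matrix $H$, a bump function (supported in a chart near $x_0$) times the linear map $H$ furnishes a perturbation $\eta$ with $D_{x_0}\eta = H$. Parametric transversality then gives a residual set of $\psi$ for which $x\mapsto D_x\psi$ is transversal to $\{0\}\subset M_{2\times 2}(\R)$; since $\codim\{0\}=4>2=\dim\T^2$, transversality forces $D_x\psi\neq 0$ for all $x$. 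Second, restricting to this open set of $\psi$, the map $\Phi(\psi,x):=\det D_x\psi$ is a submersion at each of its zeros: by Jacobi's formula $\partial_\eta\Phi(\psi,x)=\Tr(\operatorname{Adj}(D_x\psi)\,D_x\eta)$, and taking $D_x\eta = \operatorname{Adj}(D_x\psi)^\top$ this equals $\|\operatorname{Adj}(D_x\psi)\|^2>0$, because $\operatorname{Adj}(A)\neq 0$ for every nonzero $2\times 2$ matrix $A$. Hence $\Phi$ is transversal to $\{0\}\subset\R$, and a second application of parametric transversality yields a residual, in particular dense, set of $\psi$ for which $0$ is a regular value of $h_\psi$. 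Together with the smoothing step this shows $\mathcal R$ is $C^r$-dense.

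\emph{Conclusion and main point.} An open dense subset of a Baire space is residual, completing the proof. The genuinely new input over standard jet transversality is the recognition that the only obstruction to $\Phi$ being a submersion is the rank-zero locus $\{D_x\psi = 0\}$, which is generically empty for purely dimensional reasons, and that away from it the hypothesis $N=2$ makes $\operatorname{Adj}(D_x\psi)$ nonzero, so $\det$ restricts to a submersion on its own zero set. I expect the point requiring the most care is exactly the infinitesimal-surjectivity computation — checking that $C^\infty$ perturbations of $\psi$ realize prescribed first-order jet data at a point — together with keeping the openness and density arguments compatible so that the stated hypothesis $r\geq 2$ (rather than a larger one) is enough.
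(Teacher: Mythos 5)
Your argument is correct and reaches the stated conclusion, but by a genuinely different route from the paper's, so a comparison is worthwhile. The paper proceeds component-wise: it first restricts to generic $\psi=(\psi_1,\psi_2)$ with $\psi_1,\psi_2$ Morse and with disjoint critical sets, which guarantees $D_x\psi\neq 0$ for all $x$, and then applies the Sard--Smale parametric transversality theorem once to the functional $(\psi,x)\mapsto D_x\psi_1\wedge D_x\psi_2$, verifying infinitesimal surjectivity by hand using whichever component has nonvanishing differential at the given point. You instead apply parametric transversality twice and never invoke Morse theory: a pure dimension count ($\codim\{0\}=4>2=\dim\T^2$) eliminates the rank-zero locus $\{D_x\psi=0\}$ for generic $\psi$, and then Jacobi's formula together with the elementary $2\times 2$ fact that $\operatorname{Adj}(A)\neq 0$ whenever $A\neq 0$ shows $\Phi(\psi,x)=\det D_x\psi$ is a submersion along its zero set, so a second application of the same theorem finishes the job. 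Your version is more symmetric (it does not privilege one component of $\psi$), makes explicit exactly where the hypothesis $N=2$ enters (the adjugate identity fails for $N>2$, where $\operatorname{Adj}(A)$ can vanish on matrices of corank $\geq 2$), and, combined with your separate openness argument, yields the mildly stronger conclusion that $\mathcal R$ is open and dense rather than merely residual. One point to tidy up: the detour through $C^\infty$ brings in the Fr\'echet-vs-Banach technicality in Sard--Smale. It is harmless here (one can invoke the $C^\infty$ jet-transversality theorem, or intersect the residual sets obtained in $C^k$ for each finite $k$), but it is also avoidable: since the target $M_{2\times 2}(\R)$ is finite-dimensional and you only need first derivatives of $\psi$, you can run both transversality arguments directly in $C^r$, $r\geq 2$, exactly as the paper does with its Banach-space parameter $Y=(C^k)^2$.
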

\begin{proof}
We write $\psi=(\psi_1,\psi_2)$ where $\psi_1,\psi_2:\ \T^2\to\R$. It is well-known that Morse functions are generic; without loss, we may assume that $\psi_1$ and $\psi_2$ are 
Morse functions, hence have finitely many critical points. 
We also assume that the set of critical points of $\psi_1$ is disjoint from that of $\psi_2$, which can be achieved by an arbitrary small translation $\psi_1(\cdot)\mapsto \psi_1(\cdot+a),\ a\in \R^2$ small. Thus we conclude for all $x\in \T^2$ either $D_x\psi_1$ or $D_x\psi_2$ is nonzero. 

Noting that $\det D_x \psi = \| D_x \psi_1 \wedge D_x \psi_2\|$, we introduce the function
\[
\Psi(\psi_1,\psi_2,z)= D_x \psi_1 \wedge D_x \psi_2 \, .
\]
We seek to apply the following consequence of the Sard-Smale theorem to the functional $\Psi$. 
 \begin{thm}[Theorem 5.4 of  \cite{H}]\label{ThmHutchings}
 Let $Y,Z$ be separable Banach manifolds, $E\to Y\times Z$ a Banach space fiber bundle and $\Psi:\ Y\times Z\to E$ a smooth section.  Suppose we have for all $(y,z)\in \Psi^{-1}(0)$
\begin{enumerate}
\item the differential $\nabla \Psi(y,z):\ T_{y}Y\times T_z Z\to E_{(y,z)}$ is surjective;
\item the partial derivative $\partial_z \Psi(y,z):\ T_z Z\to E_{(y,z)}$ is Fredholm of index $\ell$;
\end{enumerate}
Then for generic $y\in Y$, the set $\{z\in Z \ |\ \Psi(y,z)=0\}$ is an $\ell$-dimensional submanifold of $Z$. 
 \end{thm}
We apply Theorem \ref{ThmHutchings} with $Y=\big(C^k(\T^2,\R^2)\big)^2,\ Z=\T^2$, and $E = Y \times \Lambda^2(\T^2)$, where
$\Lambda^2(\T^2)$ is the vector bundle of differential 2-forms on $\T^2$.
 Concretely, we identify
$E \cong (C^k(\T^2, \R^2))^2 \times \T^2 \times \R$ 
using $T \T^2 \cong \T^2 \times \R^2$ and
 $\bigwedge^2(\R^2) \cong \R$. In particular, item 2 is always satisfied since 
 $\pd_x \Psi (\psi_1, \psi_2,x): T_x \T^2 \to E_{(\psi_1, \psi_2, x)}$ is a linear mapping between two finite-dimensional spaces. 

It remains to check item 1. The derivative of $\Psi$ acting on $(h_1, h_2, v) \in 
(C^k(\T^2, \R))^2 \times \R^2$ is given by
\begin{align*}
D_{(\psi_1, \psi_2, x)} \Psi (h_1, h_2, v) & = 
D_x h_1 \wedge D_x \psi_2 + D_x \psi_1 \wedge D_x h_2 + \\
& (D^2_x \psi_1 (v)) \wedge D_x \psi_2 + D_x \psi_1 \wedge (D^2_x \psi_2(v)) \, .
\end{align*}
It suffices to check surjectivity of $D_{(\psi_1, \psi_2, x)} \Psi$ at all $(\psi_1, \psi_2, x)$
such that $\psi_1, \psi_2$ are Morse. With $x$ fixed, by symmetry 
we can assume without loss that $D_x \psi_1 \neq 0$. Set $h_1 = 0, v = 0$, and construct
$h_2$ so that $D_x h_2$ is not parallel with $D_x \psi_1$. Then, 
$D_{(\psi_1, \psi_2, x)} \Psi(h_1, 0, 0) = D_x \psi_1 \wedge D_x h_2 \neq 0$, 
hence $D \Psi$ is surjective at $(\psi_1, \psi_2, x)$. This completes the proof. 
\end{proof}

\subsection{Proof of Theorem \ref{thm:strongCoupling}}

\begin{proof}
For ease of notation and to avoid factors of $2 \pi$, 
we work below with the parameterization $\T^2 \cong [0,2\pi)^2$. 
By Lemma \ref{lem:F2Generic}, it suffices to check the transversality condition \ref{eq:psiTransCond} for the function
\[
\psi(x_1,x_2) = \begin{pmatrix}
\sin x_1 +  \sin  (x_2 - x_1) \\ 
\sin x_2 + \sin  (x_1 - x_2) 
\end{pmatrix} \, . 
\]
That is, we seek to show that 
the system $\det D_x \psi = 0 , D_x \det D \psi = 0$ does not have any solutions. 
This system of equations is given by
\begin{align}
\cos(x_1)\cos(x_2)+(\cos(x_1)+\cos(x_2))\cos(x_1-x_2)=0 \label{eq1}\\
-\sin(x_1)\cos(x_2)-\sin x_1\cos(x_1-x_2)-(\cos x_1+\cos x_2)\sin(x_1-x_2)=0 \label{eq2}\\
-\sin(x_2)\cos(x_1)-\sin x_2\cos(x_1-x_2)+(\cos x_1+\cos x_2)\sin(x_1-x_2)=0 \label{eq3} 
\end{align}
Adding \eqref{eq2} to \eqref{eq3} gives
\begin{align}\label{eq4}
\sin(x_1)\cos(x_2)+\sin x_1\cos(x_1-x_2)+\sin(x_2)\cos(x_1)+\sin x_2\cos(x_1-x_2)=0 \, .
\end{align}
 Solving \eqref{eq4} and \eqref{eq1} for $\cos(x_1 - x_2)$ separately yields
\[
\frac{1}{\sec (x_1)+\sec (x_2)}=\frac{\sin (x_1+x_2)}{\sin
   (x_1)+\sin (x_2)} \, ;
\]
 division by $\cos x_1, \cos x_2$ is justified since, as one can easily check, no solution
$(x_1, x_2)$ can satisfy either of $\cos x_1 = 0, \cos x_2 = 0$. With some standard algebraic manipulations, this equation can be cast as
\[
(\sin x_1+\sin x_2)(1-\sin x_1\sin x_2)=0 \, .
\]

If $\sin x_1 \sin x_2 = 1$, then $\sin x_1=\sin x_2=\pm1$, 
which is inconsistent with \eqref{eq2}. If
$\sin x_1+\sin x_2=0$, then $x_1+x_2=2k\pi$ or $x_1-x_2=(2k+1)\pi$ for some $k \in \Z$. If
 $x_1+x_2=2k\pi$, \eqref{eq1} will give us $\cos(2x_1)=-\frac{\cos x_1}{2}.$ Plugging this into \eqref{eq3}, we get $\sin(2x_1)=\frac{\sin x_1}{4}.$
Since $\frac{\cos^2x_1}{4}+\frac{\sin^2 x_1}{16}=1$ is a contradiction, we 
deduce that no solution to \eqref{eq1},\eqref{eq2},\eqref{eq3} can satisfy $x_1 + x_2 = 2 k \pi$. 
Similarly, one can rule out solutions satisfying $x_1-x_2=(2k+1)\pi$. This completes the proof. 
\end{proof}

\section{{Construction of noise models satisfying conditions (E), (C), (ND)}}\label{sec:noiseModel}

\newcommand{\Sk}{\operatorname{Skew}}

In this section we will construct an explicit example of a random, volume-preserving diffeomorphism $\omega \mapsto R_\omega$ satisfying assumptions (E), (C), (ND). 
Throughout, for ease of notation we write $d = 2 N$. We write $\Oc(d)$ for the space
of orthogonal $d \times d$ matrices, and $\rho$ for the geodesic distance on $\Oc(d)$. 
Let $\Sk(d) = T_{\Id} \Oc(d)$ denote the Lie algebra of skew-symmetric $d \times d$ matrices.

\subsection*{Preliminary construction}
Recall that $\T^d$ is parametrized by $[0,1)^d$. 
Let $\{ z_i\}_{i = 1}^K$ be a collection of points with the property that
\begin{align}\label{cover120}
\T^d = \cup_i B_{1/20}(z_i) \, , 
\end{align}
i.e., the balls $\{ B_{1/20}(z_i)\}$ of radius $1/20$ cover $\T^d$.

Let $\psi : [0,\infty) \to [0,1]$ be a $C^\infty$ bump function such that $\psi |_{[0,1/10]} \equiv 1$
and $\psi|_{[1/5,\infty)} \equiv 0$. For $z \in \T^d$, define $\Delta_i(z) \in [-1/2, 1/2)^d \subset \R^d$
to be the unique vector such that $z - z_i = \Delta_i(z)$ modulo 1. 

For $U \in \Sk(d)$, define
\[
\Phi_U^{(i)} : \T^d \to \T^d \, , \quad \Phi_U^{(i)}(z) 
= z_i + \exp( \psi(d(z, z_i)) U) \Delta_i(z)
\]
where in the above formula we regard all coordinates in the RHS modulo 1. 
This yields a defined, continuous mapping of $\T^d$ into itself. 

Geometrically, along shells of constant distance from $z_i$, $\Phi^{(i)}_U(z)$ is a rigid rotation; as distance from $z_i$ increases, the rotation diminishes to 0. With this picture in mind, it is intuitively clear that $\Phi^{(i)}_U$ preserves volume on $\T^d$. Below, we check this with linear algebra.

\begin{cla}
The mapping $z \mapsto \Phi^{(i)}_U(z)$ is a volume-preserving diffeomorphism. 
\end{cla}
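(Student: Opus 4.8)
The plan is to show that the Jacobian of $\Phi^{(i)}_U$ has determinant $\equiv 1$ at every point, by working in coordinates centered at $z_i$. Write $z - z_i = \Delta = \Delta_i(z)$, and introduce polar-type coordinates $(r, \theta) \in [0,\infty) \times S^{d-1}$ with $\Delta = r\theta$; the map takes the form $\Delta \mapsto \exp(\psi(r) U)\Delta$, where the rotation applied depends only on $r = |\Delta| = d(z,z_i)$ (recall $\psi$ has support in $[0,1/5) \subset [0,1/2)$, so the modular identifications cause no issues inside the support, and outside the support the map is the identity, which is trivially volume-preserving). First I would decompose $D_z \Phi^{(i)}_U$ at a point with $\Delta \ne 0$ into its action on the radial direction $\hat\Delta = \Delta/|\Delta|$ and on the tangent sphere $\{\Delta\}^\perp$. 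Along the sphere, the map acts as the orthogonal matrix $\exp(\psi(r)U)$, which has determinant $1$. The only non-orthogonal contribution comes from the radial derivative: differentiating $r \mapsto \exp(\psi(r)U)(r\theta)$ in $r$ gives $\exp(\psi(r)U)\theta + \psi'(r)\exp(\psi(r)U)U(r\theta)$.

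The key computation is then to check that the radial component of this radial derivative is exactly $1$ (so that the total Jacobian determinant, being a product of the sphere part $\det \exp(\psi(r)U) = 1$ and the radial part, equals $1$). Concretely, the radial component is $\langle \exp(\psi(r)U)\theta + \psi'(r)\,r\,\exp(\psi(r)U)U\theta,\ \exp(\psi(r)U)\theta\rangle$, since the image point's unit radial direction is $\exp(\psi(r)U)\theta$ (as $\exp(\psi(r)U)$ is orthogonal, it preserves norms, so the image of $r\theta$ has norm $r$ and radial direction $\exp(\psi(r)U)\theta$). Using orthogonality of $\exp(\psi(r)U)$, the first term contributes $\langle\theta,\theta\rangle = 1$, and the second contributes $\psi'(r)r\langle U\theta,\theta\rangle = 0$ because $U$ is skew-symmetric, so $\langle U\theta,\theta\rangle = 0$. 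Hence the radial stretch factor is $1$, and $\det D_z\Phi^{(i)}_U = 1$ wherever $\Delta \ne 0$; at $\Delta = 0$ the map is smooth (near $z_i$, $\psi \equiv 1$ so $\Phi^{(i)}_U$ is the linear map $\exp(U)$ in the chart) and the determinant is $1$ by continuity or direct inspection.

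Smoothness and the diffeomorphism property: $\Phi^{(i)}_U$ is smooth since $\psi$ and $\exp$ are smooth and $d(z,z_i) = |\Delta_i(z)|$ is smooth away from the cut locus, but on the cut locus $\psi(d(z,z_i)) \equiv 0$ and $\Phi^{(i)}_U$ agrees with the identity in a neighborhood there, so it is globally smooth. Injectivity follows because on each sphere $\{|\Delta| = r\}$ the map is a bijective rotation and distinct spheres map to distinct spheres (the norm is preserved); combined with $\det D\Phi^{(i)}_U \equiv 1 \ne 0$ and properness on the compact manifold $\T^d$, this gives that $\Phi^{(i)}_U$ is a diffeomorphism.

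The main obstacle is bookkeeping the coordinate change cleanly — in particular being careful that the polar decomposition is only valid for $\Delta \ne 0$ and handling the point $\Delta = 0$ separately, and verifying that the modular reductions ``$\bmod\ 1$'' never interfere because everything relevant is supported in $B_{1/5}(z_i)$. The linear-algebra core (the cancellation $\langle U\theta,\theta\rangle = 0$) is immediate from skew-symmetry; the real care is in setting up the radial/spherical splitting so that the determinant genuinely factors as claimed.
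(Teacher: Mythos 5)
Your proof is correct and is essentially the same argument as the paper's: both decompose the tangent space into the radial line $\R\hat{\Delta}$ and its orthogonal complement, and both reduce the determinant computation to the cancellation $\langle U\hat{\Delta},\hat{\Delta}\rangle = 0$ from skew-symmetry. The only cosmetic difference is that you work directly in radial/tangential frames at source and target (polar coordinates), whereas the paper first factors $D\Phi = \exp(\psi U)\cdot A$ and then exhibits the remaining $A$ as a unit lower-triangular shear in the $\hat z \oplus \hat z^\perp$ splitting; both routes hinge on the same linear-algebra fact.
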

\begin{proof}
For simplicity, we take $z_i = 0$ and
will check these properties for $\Phi_U(z) = \exp(\psi(|z|) U) z$ defined for $z \in \R^d$. 
For $v \in T_z \R^d \cong \R^d$, we have
\[
D_z \Phi_U (v) = \exp (\psi(|z - z_i|) U) \left( \langle \hat z, v \rangle \psi'(|z - z_i|) U z
+ v \right)\, , 
\]
where $\hat z = z / |z|$ and here we use that for $t \in \R$, $\exp(t U) U = U \exp(t U)$ (note that there is no differentiability issue at $z = 0$). Since 
$\exp(U) \in \Oc(d)$, it suffices to check
that 
\[
v \mapsto A v := \langle \hat z, v \rangle \psi'(|z|) U z + v 
= (|z| \psi'(|z|) U \hat z \otimes \hat z^\top + \Id) v
\]
 has determinant 1. 

Define $\Pi v = \langle v, \hat z \rangle \hat z$ and $\Pi^\perp = \Id - \Pi$. 
Let $v \in \R^d$ and decompose $v = \Pi v + \Pi^\perp v =: \a \hat z + v^\perp$. 
Then,
\begin{gather*}
\Pi (A v) =  |z| \psi'(|z|) \langle \hat z, v \rangle \Pi U \hat z + \Pi v = \a \hat z \quad \text{ and } \\
\Pi^\perp (A v^\perp) = |z| \psi'(|z|) \langle \hat z, v \rangle \Pi^\perp U \hat z + 
\Pi^\perp v = \a |z| \psi'(|z|)  U \hat z + v^\perp \,. 
\end{gather*}
having used that $\Pi U \hat z = 0, \Pi^\perp U \hat z = U \hat z$ (recall that $\langle w, U w \rangle \equiv 0$ when $U \in \Sk(d)$). 
Expressing $A$ in block-matrix form, we have
\[
A v = \begin{pmatrix} \Pi A v \\ \Pi^\perp A v 
\end{pmatrix}
= \begin{pmatrix}
\Id & 0 \\
 |z| \psi'(|z|) U \hat z \otimes \hat z^\top & \Id
\end{pmatrix} 
\begin{pmatrix}
\Pi v \\ \Pi^\perp v
\end{pmatrix} \, .
\]
This clearly has determinant 1. 
\end{proof}

\subsection*{Construction of $\Omega_0, R_\omega$}

Given $h \in \R^d$, define $T_h : \T^d \to \T^d$ to be the translation $T_h z = z + h$, again
regarding all coordinates on the RHS modulo 1. Given $U^{(1)}, \cdots, U^{(K)} \in \Sk(d)$ and $v \in \R^d$, we define
\[
R_{(v; \{U^{(i)}\})} := T_v \circ \Phi^{(K)}_{U^{(K)}} \circ \cdots \circ \Phi^{(1)}_{U^{(1)}} \, .
\]
With $\Omega_0 = 
\R^d \times \Sk(d)^K$, we see that $\Omega_0 \ni \omega \mapsto R_\omega$
yields a `random' volume-preserving diffeomorphism of $\T^d$.

Below, we regard $\Omega_0$ as a copy of $\R^{d + K d (d - 1)/2}$ equipped with
Lebesgue measure $\Lambda$ and the standard Euclidean norm.
\begin{prop} 
There exists $c = c_{K,d} > 0$ sufficiently small so that the following holds.  
Let $\P_0$ be any Borel probability measure on $\Omega_0$ such that
\begin{itemize}
\item[(i)] $\operatorname{Supp}( \P_0)$ is contained in the ball of radius $c_K$ centered at the origin; 
\item[(ii)] $\P_0 \ll \Lambda$ with $\| d \P_0 / d \Lambda\|_{L^\infty} < \infty$; and
\item[(iii)] $\exists \zeta > 0$ such that $d \P_0 / d \Lambda > 0$ on the ball of radius $\zeta$
centered at the origin. 
\end{itemize}
Then, $R_\omega$ equipped with $\P_0$ satisfies conditions (E), (C) and (ND). 

\end{prop}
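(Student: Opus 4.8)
The plan is to verify the three conditions (E), (C), (ND) one at a time, using the structure of $R_\omega = T_v \circ \Phi^{(K)}_{U^{(K)}} \circ \cdots \circ \Phi^{(1)}_{U^{(1)}}$. Each $\Phi^{(i)}_{U^{(i)}}$ is a volume-preserving diffeomorphism by the Claim, and $T_v$ is a translation, so $R_\omega$ is automatically a volume-preserving diffeomorphism of $\T^d$; this is the easy part.

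First I would dispatch condition (E). Given $z \in \T^d$, I want to show the law of $R_\omega z$ has a density that is strictly positive on a ball $B_c(z)$. The key observation is that even if all the $U^{(i)}$ are zero (which lies in the support by (iii) if $\zeta$ is not too small relative to the support constraint, or one simply notes $0$ is an interior point of $\Sk(d)^K$), one still has $R_{(v;\{0\})} z = z + v$, so the translation component alone, with $v$ ranging over the ball of radius $\zeta$ where $d\P_0/d\Lambda > 0$, already forces $R_\omega z$ to have positive density on $B_\zeta(z)$. So (E) holds with $c = \zeta$, once I check that the map $(v,\{U^{(i)}\}) \mapsto R_{(v;\{U^{(i)}\})}z$ is a submersion in the $v$-variable (trivially, since $\partial_v$ of it is the identity) so that the pushforward of an absolutely continuous measure is absolutely continuous with a controlled density.

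Next, condition (C). I need $D_z R_\omega(\Cc^x_{1/20}) \subset \Cc^x_{1/10}$ and $\|D_z R_\omega\|, \|(D_z R_\omega)^{-1}\| \leq 2$, both with probability $1$, for all $z$. The norm bounds follow by choosing the support radius $c = c_{K,d}$ small: the derivative of $\Phi^{(i)}_{U^{(i)}}$ is $\exp(\psi U^{(i)})\big(\Id + (\text{something bounded by } C\|U^{(i)}\|)\big)$, so $\|D\Phi^{(i)}_{U^{(i)}} - \Id\| \leq C_d \|U^{(i)}\|$, and composing $K$ of these plus a translation (derivative $\Id$) gives $\|D_z R_\omega - \Id\| \leq C_{K,d}\max_i\|U^{(i)}\| \leq C_{K,d} c$, which is $\leq 1/100$ if $c$ is small enough; the same for the inverse. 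The cone inclusion then follows: a linear map within $1/100$ of the identity in operator norm sends $\Cc^x_{1/20}$ into $\Cc^x_{1/10}$ by an elementary estimate on how the ratio $\|v\|/\|u\|$ can change.

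The main obstacle is condition (ND): I must show that the pushforward $\hat Q((z,E),\cdot)$ on the Grassmannian bundle $\Gr_N(\T^{2N})$ is absolutely continuous with an $L^\infty$ density bounded uniformly in $(z,E)$. The strategy is to show that the map
\[
\omega = (v, \{U^{(i)}\}) \;\longmapsto\; \big(R_\omega z,\ D_z R_\omega(E)\big) \in \T^{d} \times \Gr_N(\R^d)
\]
is a submersion for $\P_0$-a.e.\ $\omega$ in the support, in fact onto a neighborhood of $(z,E)$; then pushing forward the absolutely continuous $\P_0$ (with bounded density by (ii)) along a submersion with nondegenerate Jacobian gives an absolutely continuous image with bounded density, and compactness of $\Gr_N(\T^{2N})$ and of $\mathrm{Supp}(\P_0)$ upgrades this to a uniform bound $M < \infty$. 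To prove the submersion property, I would fix the base point by using the $v$-variable (which moves $R_\omega z$ freely over a neighborhood of $z$ without affecting $D_z R_\omega(E)$, since translations have trivial derivative), and then show the $\{U^{(i)}\}$-variables move $D_z R_\omega(E)$ transitively: because $z$ lies in some $B_{1/20}(z_i)$ by the covering \eqref{cover120}, we have $\psi(d(z,z_i)) \equiv 1$ near $z$ (on the $1/10$-plateau), so $D_z\Phi^{(i)}_{U^{(i)}}$ acts, to leading order in $U^{(i)}$, as $\exp(U^{(i)})$ plus a rank-one correction supported in the radial direction. Differentiating the induced action on $\Gr_N(\R^d)$ at $U^{(i)} = 0$, the derivative in the $U^{(i)}$-direction covers (at least) the image of $\Sk(d)$ under the infinitesimal $O(d)$-action on $\Gr_N(\R^d)$ at the point $D_z R_\omega(E)$ — and this infinitesimal action is surjective onto $T\Gr_N(\R^d)$ since $O(d)$ acts transitively on the Grassmannian. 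I would need to check that the rank-one radial correction term does not destroy this surjectivity (it only adds directions, or at worst one can perturb $z$ slightly within $B_{1/20}(z_i)$, or use a second index $z_j$ from the cover), and that the Jacobian determinant of the full map stays bounded below on the (compact) support — this is the delicate point. Granting that, linearity of pushforward and a partition-of-unity/compactness argument finish the uniform bound $M$.
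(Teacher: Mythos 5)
Your proposal follows essentially the same route as the paper: conditions (E) and (C) from the norm/support bounds on $\omega$ and the $v$-translation, and (ND) from the submersion property of $\omega \mapsto (R_\omega z, D_z R_\omega(E))$, using the $v$-variable to move the base point and the $U^{(i)}$-variables to move the Grassmannian fiber via the infinitesimal $O(d)$-action and transitivity of $O(d)$ on $\Gr_N(\R^d)$.

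The one place where you introduce a concern that the paper's construction is designed to avoid is the ``rank-one radial correction.'' You correctly observe that, by the covering condition \eqref{cover120}, some $z_j$ has $d(z, z_j) \le 1/20 < 1/10$, so $z$ lies in the \emph{plateau} region $\{\psi \equiv 1\}$ of the bump function. But you then still treat $D_z \Phi^{(j)}_{U^{(j)}}$ as $\exp(U^{(j)})$ ``plus a rank-one correction,'' and flag controlling this correction as the delicate step. In fact on the plateau the correction term vanishes identically: the derivative formula in the Claim is
\[
D_z \Phi^{(j)}_{U}(w) = \exp\bigl(\psi(d(z,z_j)) U\bigr)\Bigl(\langle \hat z, w\rangle\,\psi'(d(z,z_j))\,U z + w\Bigr),
\]
and since $\psi' \equiv 0$ on $[0,1/10]$, the rank-one term is exactly zero, so $D_z \Phi^{(j)}_U = \exp(U)$ with no error. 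The paper isolates this in Claim \ref{cla:findJ}: for small enough $\|(U^{(i)})\|$, the point $z' = \Phi^{(j-1)}_{U^{(j-1)}}\circ\cdots\circ\Phi^{(1)}_{U^{(1)}}(z)$ stays inside $B_{1/10}(z_j)$, so the relevant derivative is exactly an orthogonal rotation, and the submersion check reduces cleanly to the surjectivity of the infinitesimal $O(d)$-action on the Grassmannian at a single chart point. So your ``delicate point'' about the correction destroying surjectivity or degrading the Jacobian bound is a phantom; the covering radius $1/20$ and the plateau radius $1/10$ were chosen precisely to leave a margin eliminating it. The remaining point you flag — a uniform lower bound on the transverse Jacobian of $\Psi_{(z,E)}$ over the (compact) support of $\P_0$ and over $(z,E)$ — is genuine but standard: it is the compactness argument the paper invokes alongside the Constant Rank Theorem, and both your argument and the paper's lean on it the same way.
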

\begin{proof}
Throughout, for $(z, E) \in \Gr_{d/2}(\T^d)$ fixed, we write
\[
\Psi_{(z, E)} : \Omega_0 \to \Gr_{d/2}(\T^d) \, , \quad 
\Psi_{(z, E)}(\omega) := (R_{\omega}z , D_z R_{ \omega}(E)) \, .
\]
Here and throughout, elements $\omega \in \Omega_0$ are written $\omega = (v, (U^{(i)}))$. 

Conditions (C) and (E) are straightforward, and follow 
from the fact that for fixed $(z, E) \in \Gr_{d/2} (\T^d)$, 
the mapping $\Psi = \Psi_{(z, E)}$ is a continuous mapping sending the origin to $(z, E)$. 
Condition (C) uses item (i), while (E) uses (ii) and (iii). 

It remains to check condition (ND). For this, by a compactness argument and the Constant Rank Theorem, it suffices to show that for $(z, E) \in \Gr_{d/2}(\T^d)$ fixed, the mapping
$\Psi = \Psi_{(z, E)}$ is a submersion. To simplify the argument, we begin with the following
observation regarding the `upper triangular' structure of $D \Psi$: 
writing $v = (v_1, \cdots, v_d) \in \R^d$ and $z = (z_1, \cdots, z_d)$\footnote{That is, we are abandoning for the moment the distinction between $x$ and $y$ coordinates in $\T^d = \T^{2N}$},
we have that 
\[
D_{(t, (U^{(i)}))} \Psi \left(\frac{\pd}{\pd v_i} \right) = \frac{\pd}{\pd z_i} \, .
\]
That is, varying $v$ does not change at all the $\Gr_{d/2}(\R^d)$ coordinate in the image. 
Therefore, it suffices to show that when $v$ is held fixed, we have that
\[
(U^{(i)}) \mapsto  D_z R_{(v, (U^{(i)}))}(E)
\]
is a submersion $\Sk(d)^K \to \Gr_{d/2}(\R^d)$. 

In fact, we will show that for each $z$, it suffices to consider tangent directions
corresponding to a single $U^{(i)}$. To see this, we make the following claim.

\begin{cla}\label{cla:findJ}
There exists $c = c_{K, d} > 0$ with the following property. 
For any $z \in \T^d$ there exists $1 \leq j \leq K$ such that for any $(U^{(i)}) \in \Sk(d)^K$, 
$\| (U^{(i)})\| \leq c$, we have that
\[
d(\Phi^{(j-1)}_{U^{(j-1)}} \circ \cdots \circ \Phi^{(1)}_{U^{(1)}} z, z_j) \leq \frac{1}{10} \, .
\]
\end{cla}
Indeed, the claim holds with $j$ any index for which $d(z, z_j) \leq 1/20$ (see \eqref{cover120}), assuming $c = c_{K, d}$
is taken small enough. 

With $(z, E)$ and the above value of $j$ fixed, we now set about checking that
\[
U^{(j)} \mapsto D_z R_{(t, U^{(i)})} (E)
\]
is a submersion. Since $T_v, \Phi^{(i)}_{U^{(i)}}$ are all diffeomorphisms of $\T^d$, 
it suffices to check that $U^{(j)} \mapsto D_{z'} \Phi^{(j)}_{U^{(j)}}(E')$ is a submersion
$\Sk(d) \to \Gr_{d/2}(\T^d)$, where $z' = \Phi^{(j-1)}_{U^{(j-1)}} \circ \Phi^{(1)}_{U^{(1)}}(z)$
and $E' = D_z \Phi^{(j-1)}_{U^{(j-1)}} \circ \Phi^{(1)}_{U^{(1)}}(E)$. 
By our choice of $j$, Claim \ref{cla:findJ} ensures $d(z', z_j) \leq 1/10$, hence
$D_{z'} \Phi^{(j)}_{U^{(j)}} = \exp(U^{(j)})$. In view of the composition
\[
U^{(j)} \mapsto \exp(U^{(j)}) \mapsto \exp(U^{(j)})(E') 
\]
and the fact that $U \mapsto \exp(U)$ is a local diffeomorphism $\Sk(d) \to \Oc(d)$, it suffices
to check that $O \mapsto O(E)$ is a submersion $\Oc(d) \mapsto \Gr_{d/2}(\R^d)$. 
Since surjectivity of a derivative is an open property, it suffices to check that the differential of
$O \mapsto O(E)$ is a submersion at the identity $\Id \in \Oc(d)$. 
\end{proof}
\begin{cla}
Fix $1 \leq k \leq d$ and $E_0 \in \Gr_k(\R^d)$. Define $\Xi : \Oc(d) \to \Gr_k(\R^d)$, 
$\Xi(O) := O(E_0)$. Then, $D_{\Id} \Xi : \Sk(d) \to T_{E_0} \Gr_k(\R^d)$ is surjective.
\end{cla}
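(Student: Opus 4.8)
The plan is to compute the differential $D_{\Id}\Xi$ explicitly under the basis-free identification $T_{E_0}\Gr_k(\R^d)\cong L(E_0,E_0^\perp)$ recalled in point (B) of Section~\ref{SPrelim}, and then to exhibit, for an arbitrary target tangent vector, a skew-symmetric preimage.

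First I would identify the derivative. For $U\in\Sk(d)$, consider the one-parameter subgroup $t\mapsto\exp(tU)\in\Oc(d)$, so that $\Xi(\exp(tU))=\exp(tU)(E_0)$; differentiating the motion of a vector $v\in E_0$ gives $\frac{d}{dt}\big|_{t=0}\exp(tU)v=Uv$. Writing $\exp(tU)=\Id+tU+O(t^2)$ and expanding the graph coordinate $\graph_{E_0}$, one sees that the induced tangent vector in $L(E_0,E_0^\perp)$ is obtained by post-composing $U|_{E_0}$ with the orthogonal projection onto $E_0^\perp$; that is,
\[
D_{\Id}\Xi(U)=\Pi_{E_0^\perp}\circ\big(U|_{E_0}\big)\in L(E_0,E_0^\perp)\,.
\]
(This is the standard formula for the derivative of the $\Oc(d)$-orbit map on a Grassmannian.)

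Next, given an arbitrary $H\in L(E_0,E_0^\perp)$, I would construct a preimage. Write $\R^d=E_0\oplus E_0^\perp$, let $H^\ast:E_0^\perp\to E_0$ be the adjoint of $H$ with respect to the Euclidean inner product, and define $U$ by $U|_{E_0}=H$ and $U|_{E_0^\perp}=-H^\ast$, extended linearly. Checking $\langle Uv,w\rangle=-\langle v,Uw\rangle$ on the four blocks $E_0\times E_0$, $E_0\times E_0^\perp$, $E_0^\perp\times E_0$, $E_0^\perp\times E_0^\perp$ shows $U\in\Sk(d)$: the two diagonal blocks vanish because $U$ interchanges $E_0$ and $E_0^\perp$, and the two off-diagonal blocks agree precisely by the definition of $H^\ast$. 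Since $H$ already takes values in $E_0^\perp$, we have $\Pi_{E_0^\perp}\circ(U|_{E_0})=H$, i.e.\ $D_{\Id}\Xi(U)=H$, so $D_{\Id}\Xi$ is surjective.

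There is essentially no obstacle: the only mild point requiring care is pinning down the identification of $D_{\Id}\Xi$ with $\Pi_{E_0^\perp}\circ(U|_{E_0})$ via the chart $\graph_{E_0}$, after which the skew-symmetric lift $U=H\oplus(-H^\ast)$ is immediate.
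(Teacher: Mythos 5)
Your proposal is correct and follows essentially the same route as the paper: both work in the chart $T_{E_0}\Gr_k(\R^d)\cong L(E_0,E_0^\perp)$, compute $D_{\Id}\Xi(U)=\Pi_{E_0^\perp}U|_{E_0}$, and exhibit a skew-symmetric preimage of a given $H\in L(E_0,E_0^\perp)$ by placing $H$ and $-H^\top$ in the off-diagonal blocks. The only cosmetic difference is that the paper writes out $g(O)=\Pi^\perp O(\Pi O|_{E_0})^{-1}$ explicitly before differentiating and leaves the diagonal blocks of $U$ arbitrary, while you set them to zero.
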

\begin{proof}[Proof of claim]
We evaluate the differential explicitly in coordinates. 
Recall the chart $\Uc_{E_0} \cong L(E_0, E_0^\perp)$ for $\Gr_k(\R^d)$ at $E_0$. 
As one can check, in this chart, $\Xi(O) = O(E_0)$ is represented as
\[
\Xi(O) = \graph_{E_0} g(O) \, , \quad g(O) := (\Id - \Pi_{E_0}) O (\Pi_{E_0} O|_{E_0})^{-1} \, .
\]
Therefore, in these coordinates we have (writing $\Pi_{E_0} = \Pi, \Pi^\perp = \Id - \Pi_{E_0}$)
\[
D_O g (U)= \Pi^\perp U (\Pi O|_{E_0})^{-1} + 
\Pi^\perp O (\Pi O|_{E_0})^{-1} U (\Pi O|_{E_0})^{-1} 
\]
for $U \in T_O \Oc(d)$. Evaluating at $O = \Id$, we see that 
\[
D_{\Id} g(U) = \Pi^\perp U|_{E_0}
\]
This is clearly surjective as a linear mapping $\Sk(d) \mapsto L(E_0, E_0^\perp)$; given an arbitrary $B \in L(E_0, E_0^\perp)$, we have $D_{\Id} g(U) = B$ for any $U$ of the form
\[
U = \begin{pmatrix} \Pi U|_{E_0} & \Pi U|_{E_0^\perp} \\ 
				\Pi^\perp U|_{E_0} & \Pi^\perp U|_{E_0^\perp} \end{pmatrix}
				= 
				\begin{pmatrix} * & - B^\top \\ B & * \end{pmatrix} \, . \qedhere
\]
\end{proof}

\appendix

\section{A version of the Singular Value Decomposition}\label{sec:SVDapp}

Here we recall a version of the Singular Value Decomposition and related results 
used in this paper. 
Below, $d \geq 1$ and $A$ is a $d \times d$ matrix. The \emph{singular values} 
$\sigma_1(A) \geq \cdots \geq \sigma_d(A)$ 
are defined to be the eigenvalues of $A^\top A$, listed in decreasing order and counted
with multiplicity.

\begin{thm}[Singular Value Decomposition]\label{thm:svdApp}
There exist orthonormal bases $\{e_1, \cdots, e_d\}$ and 
$\{e_1', \cdots, e_d'\}$ of $\R^d$ with the property that
\[
A e_i = \sigma_i(A) e_i' \, .
\]
\end{thm}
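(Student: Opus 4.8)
The plan is to obtain the decomposition from the spectral theorem applied to the symmetric, positive semidefinite matrix $A^\top A$. First I would diagonalize it: since $A^\top A$ is symmetric, the spectral theorem provides an orthonormal basis $\{e_1, \dots, e_d\}$ of $\R^d$ with $A^\top A\, e_i = \mu_i e_i$, and since $\langle A^\top A\, v, v\rangle = \|Av\|^2 \geq 0$ the eigenvalues are nonnegative. Ordering them $\mu_1 \geq \cdots \geq \mu_d \geq 0$ and setting $\sigma_i := \sqrt{\mu_i}$ recovers the singular values with their asserted ordering; moreover $\|A e_i\|^2 = \langle A^\top A\, e_i, e_i\rangle = \mu_i$, so $\|A e_i\| = \sigma_i$.

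Next I would build the second orthonormal basis. Let $r$ be the largest index with $\sigma_r > 0$, so that $A e_i = 0$ for all $i > r$ (as $\|A e_i\| = \sigma_i = 0$). For $1 \leq i \leq r$, define $e_i' := \sigma_i^{-1} A e_i$. These vectors are orthonormal, since for $i, j \leq r$
\[
\langle e_i', e_j'\rangle = \frac{1}{\sigma_i \sigma_j}\langle A e_i, A e_j\rangle = \frac{1}{\sigma_i \sigma_j}\langle A^\top A\, e_i, e_j\rangle = \frac{\mu_i}{\sigma_i \sigma_j}\langle e_i, e_j\rangle = \delta_{ij}.
\]
I would then extend $\{e_1', \dots, e_r'\}$ to a full orthonormal basis $\{e_1', \dots, e_d'\}$ of $\R^d$ by appending any orthonormal basis of the orthogonal complement of $\operatorname{Span}\{e_1', \dots, e_r'\}$ (Gram--Schmidt). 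With this choice, the identity $A e_i = \sigma_i e_i'$ holds by construction for $i \leq r$, and for $i > r$ it reduces to $0 = 0$; this proves the theorem. (As a byproduct, the $e_i'$ are eigenvectors of $A A^\top$, which is worth recording if later parts of the appendix need it.)

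The argument is entirely linear-algebraic, so there is no genuine obstacle; the one point requiring mild care is the bookkeeping around vanishing singular values, namely observing that $\sigma_i = 0$ forces $A e_i = 0$ (so the corresponding $e_i'$ is unconstrained) and then that the partially defined orthonormal set $\{e_1', \dots, e_r'\}$ can always be completed inside a finite-dimensional inner product space. I expect this completion step to be the only place a careful write-up must say a word, and it is immediate.
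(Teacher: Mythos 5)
Your proof is correct and rests on the same engine as the paper's sketch, namely the spectral theorem applied to $A^\top A$. Where the paper merely remarks that $\{e_i'\}$ is ``an appropriate ordering of an eigenbasis of $AA^\top$'' (leaving the reader to resolve the matching, which is delicate when singular values repeat), you sidestep that subtlety entirely by constructing $e_i' := \sigma_i^{-1} A e_i$ directly and completing to an orthonormal basis; your closing observation that these $e_i'$ are eigenvectors of $AA^\top$ then recovers the paper's remark as a corollary rather than a hypothesis. This is a cleaner write-up of what the paper gestures at.
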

The most important part of the proof of Theorem \ref{thm:svdApp} is to check that 
the eigenvalues of $A^\top A$ and $A A^\top$ coincide (multiplicities counted). 
Indeed, $\{ e_i\}$ is an (orthonormal) eigenbasis for $A^\top A$, while $\{ e_i'\}$ is an appropriate 
ordering of an (orthonormal) eigenbasis for $A A^\top$. 

The following min-max principles for singular values are also used in this paper: 
\begin{lem}
For all $1 \leq i \leq d$, the following hold. 
\begin{itemize} 
\item[(a)] 
\[
\sigma_i(A) = \max_{\substack{E \subset \R^d \\ \dim E = i}} \min_{\substack{v \in E \\ \| v \| = 1}}
\| A v \| \, .
\]
\item[(b)] 
\[
\prod_{j =1}^i \sigma_i(A) = \max_{\substack{E \subset \R^d \\ \dim E = i}} \det(A|_E) \, , 
\]
where $A|_E$ is regarded as a linear mapping $E \to A(E)$. 
\end{itemize}
\end{lem}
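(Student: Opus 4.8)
The plan is to recognize both identities as classical Courant--Fischer-type facts: I would deduce (a) from the min-max characterization of the eigenvalues of the symmetric positive semidefinite matrix $A^\top A$, and then obtain (b) by applying the same circle of ideas to the $i$-th exterior power $\bigwedge^i A$.

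For part (a), first note that $\|A v\|^2 = \langle A^\top A v, v\rangle$ and that $A^\top A$ is symmetric with eigenvalues $\sigma_1(A)^2 \ge \cdots \ge \sigma_d(A)^2$ and orthonormal eigenbasis $\{e_1, \dots, e_d\}$ --- precisely the basis furnished by Theorem \ref{thm:svdApp}. The claim then reduces to the Courant--Fischer formula
\[
\sigma_i(A)^2 = \max_{\substack{E \subset \R^d \\ \dim E = i}} \ \min_{\substack{v \in E \\ \|v\| = 1}} \langle A^\top A v, v \rangle \, ,
\]
which I would prove in the usual two steps. For ``$\ge$'', test the outer maximum on $E_0 := \operatorname{span}\{e_1, \dots, e_i\}$: for unit $v \in E_0$ one has $\langle A^\top A v, v\rangle = \sum_{j \le i} \sigma_j(A)^2 |\langle v, e_j\rangle|^2 \ge \sigma_i(A)^2$. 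For ``$\le$'', given any $E$ with $\dim E = i$, a dimension count ($\dim E + \dim \operatorname{span}\{e_i, \dots, e_d\} = i + (d - i + 1) > d$) yields a unit vector $v \in E \cap \operatorname{span}\{e_i, \dots, e_d\}$, for which $\langle A^\top A v, v\rangle = \sum_{j \ge i} \sigma_j(A)^2 |\langle v, e_j\rangle|^2 \le \sigma_i(A)^2$. Taking square roots --- legitimate since $t \mapsto \sqrt t$ is increasing, hence commutes with $\max$ and $\min$ --- gives (a).

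For part (b) (where the left-hand product should be read as $\prod_{j=1}^i \sigma_j(A)$), I would pass to exterior powers. If $v_1, \dots, v_i$ is an orthonormal basis of $E$, then $\det(A|_E) = \|A v_1 \wedge \cdots \wedge A v_i\| = \|(\bigwedge^i A)(v_1 \wedge \cdots \wedge v_i)\|$, where $\bigwedge^i \R^d$ carries the inner product making $\{e_{j_1} \wedge \cdots \wedge e_{j_i} : j_1 < \cdots < j_i\}$ orthonormal; as $E$ ranges over $i$-dimensional subspaces, $v_1 \wedge \cdots \wedge v_i$ ranges (up to an irrelevant sign) over the unit decomposable vectors of $\bigwedge^i \R^d$. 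Hence the right-hand side of (b) equals $\sup\{\|(\bigwedge^i A)\xi\| : \xi \text{ decomposable}, \|\xi\| = 1\}$. Using the SVD $A e_j = \sigma_j(A) e_j'$ one checks directly that $(\bigwedge^i A)(e_{j_1} \wedge \cdots \wedge e_{j_i}) = \big(\prod_\ell \sigma_{j_\ell}(A)\big)\, e_{j_1}' \wedge \cdots \wedge e_{j_i}'$, so $\bigwedge^i A$ admits a singular value decomposition with singular values $\{\prod_\ell \sigma_{j_\ell}(A) : j_1 < \cdots < j_i\}$, the largest of which is $\prod_{j=1}^i \sigma_j(A)$, attained at the decomposable unit vector $e_1 \wedge \cdots \wedge e_i$, i.e., at $E = \operatorname{span}\{e_1, \dots, e_i\}$. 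Since any unit $\xi$ obeys $\|(\bigwedge^i A)\xi\| \le \|\bigwedge^i A\|_{\mathrm{op}} = \prod_{j=1}^i \sigma_j(A)$, this both caps the right-hand side over arbitrary $E$ and exhibits the maximizer, proving (b).

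The statement is routine and I anticipate no real obstacle. The only points needing a little care are in (b): the identity $\det(A|_E) = \|A v_1 \wedge \cdots \wedge A v_i\|$ (immediate from the volume-ratio definition of $\det(A|_E)$ together with the Gram-determinant formula for the norm of a wedge product) and the observation that $\|\bigwedge^i A\|_{\mathrm{op}}$ is already attained on the subvariety of decomposable vectors, which is exactly what makes restricting to the subspaces $E$ in the statement harmless.
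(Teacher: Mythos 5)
Your proof is correct. The paper records this lemma in its appendix without proof, treating both parts as standard recalled facts, so there is no in-paper argument to compare against; your derivation is exactly the standard one --- Courant--Fischer applied to $A^\top A$ for part (a), and passage to the $i$-th exterior power together with the Gram-determinant identity $\det(A|_E) = \|Av_1 \wedge \cdots \wedge Av_i\|$ for part (b) --- and both halves are carried out soundly, including the dimension-count step in the ``$\le$'' direction of (a) and the observation that $\|\bigwedge^i A\|_{\mathrm{op}}$ is attained on a decomposable vector so that restricting to subspaces $E$ loses nothing in (b). You also correctly flagged the typo in the statement of (b), where $\prod_{j=1}^i \sigma_i(A)$ should read $\prod_{j=1}^i \sigma_j(A)$. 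One further small slip in the paper worth noting: the appendix introduces $\sigma_1(A) \ge \cdots \ge \sigma_d(A)$ as ``the eigenvalues of $A^\top A$,'' when they should be the square roots of those eigenvalues, as the SVD identity $A e_i = \sigma_i(A)\, e_i'$ in Theorem~\ref{thm:svdApp} requires; your proof implicitly uses the correct convention.
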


Lastly, we state the following corollary of Theorem \ref{thm:svdApp}, which we use in 
Section \ref{subsec:SVD} to estimate singular directions. 

\begin{lem}\label{coneSVD}
Let $\Cc \subset \R^d$ be a cone\footnote{For our purposes, a cone is defined to be a subset $\Cc$ of $\R^d$ with the property that $v \in \Cc \Rightarrow \lambda v \in \Cc$ for all $\lambda \in \R$.} and $k < d$. Assume $A$ is invertible, and has the property that for any $k$-dimensional subspace $E \subset \Cc$, we have that $A(E) \subset \Cc$ and $A^\top(E) \subset \Cc$. Then, 
$\exists 1 \leq i_1 < \cdots < i_k \leq d$ such that $e_{i_j}, e_{i_j}' \in \Cc$ for $1 \leq j \leq k$. 
\end{lem}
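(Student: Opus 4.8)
The plan is to prove Lemma \ref{coneSVD} by a cardinality/dimension argument, exploiting the singular value decomposition $A e_i = \sigma_i(A) e_i'$ together with the invariance hypotheses on the cone $\Cc$. First I would recall from Theorem \ref{thm:svdApp} that $\{e_i\}$ is an orthonormal eigenbasis for $A^\top A$ and $\{e_i'\}$ is the correspondingly ordered orthonormal eigenbasis for $A A^\top$. The key structural observation is that the hypothesis ``$A(E) \subset \Cc$ and $A^\top(E) \subset \Cc$ for every $k$-dimensional $E \subset \Cc$'' forces both $A^\top A$ and $A A^\top$ to map $k$-dimensional subspaces of $\Cc$ back into $\Cc$: indeed $A^\top A (E) = A^\top(A(E))$, and while $A(E)$ need not lie in $\Cc$ as a subspace unless it too is $k$-dimensional — which it is, since $A$ is invertible — we get $A(E) \subset \Cc$, hence $A^\top(A(E)) \subset \Cc$. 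So it suffices to prove the following purely linear-algebraic statement: if $B$ is a self-adjoint invertible matrix with an orthonormal eigenbasis $\{e_i\}$, and $B$ maps every $k$-dimensional subspace contained in $\Cc$ back into $\Cc$, then at least $k$ of the eigenvectors $e_i$ lie in $\Cc$. Applying this to $B = A^\top A$ yields $k$ indices with $e_{i_j} \in \Cc$, and applying it to $B = A A^\top$ yields $k$ indices with $e_{i_j}' \in \Cc$; a small additional argument (using that $A e_{i} = \sigma_i e_i'$ and that $A$ preserves the property of lying in $\Cc$ for the relevant directions) will be needed to arrange that the \emph{same} index set works for both.

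For the core linear-algebra claim, I would argue as follows. Suppose for contradiction that fewer than $k$ of the $e_i$ lie in $\Cc$; then the set $S = \{ i : e_i \in \Cc\}$ has $|S| \le k-1$, so the span $W := \operatorname{Span}\{ e_i : i \in S\}$ has dimension $\le k-1$. The complement directions $e_i$, $i \notin S$, all fail to lie in $\Cc$. One then wants to produce a $k$-dimensional subspace $E \subset \Cc$ whose image $B(E)$ is \emph{not} contained in $\Cc$, contradicting the hypothesis. The natural candidate is built from a $k$-dimensional subspace $E \subset \Cc$ (which exists because $\Cc$, being a cone that contains $k$-dimensional subspaces by assumption — here I note the lemma is only invoked when such subspaces exist; in our application $\Cc = \Cc^x_{1/10}$ or $\Cc^y_{1/10}$, which plainly contains $N$-dimensional subspaces), and then tracking where $B$ sends it. Since $B = \sum_i \mu_i \, e_i \otimes e_i$ with distinct-magnitude behavior on the eigenlines, by choosing $E$ to have a large component along an eigenline $e_j$ with $j \notin S$ and $\mu_j$ of extreme size, the image $B(E)$ will be pushed toward the direction $e_j \notin \Cc$; making this quantitative shows $B(E) \not\subset \Cc$.

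The step I expect to be the main obstacle is making the last argument rigorous and, in particular, handling the interaction between the eigenvalue magnitudes and the cone geometry: a priori $\Cc$ is an arbitrary cone and $e_j \notin \Cc$ only says $e_j$ is outside, not that nearby directions are outside, so one cannot immediately conclude that a subspace with a large $e_j$-component exits $\Cc$. The cleanest fix is probably to argue contrapositively via the \emph{invariant-subspace} structure: use that the orthogonal projection onto $W = \operatorname{Span}\{e_i : i \in S\}$ is the limit (or a polynomial in $B$) obtained from the spectral projections, and then show that if $B$ maps every $k$-plane in $\Cc$ into $\Cc$, then so does every spectral projection of $B$, forcing $\Cc$ (or rather the $k$-planes it contains) to be ``compatible'' with the eigenspace decomposition — concretely, that any $k$-plane in $\Cc$ must be spanned by eigenvectors lying in $\Cc$, giving $\dim W \ge k$. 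Alternatively, and perhaps more simply, I would prove it by induction on $d$: pick a $k$-plane $E \subset \Cc$; since $B(E) \subset \Cc$ and $B$ is invertible self-adjoint, consider the top eigenvalue $\mu_1$ with eigenvector $e_1$; either $e_1 \in \Cc$, in which case one quotients by $e_1$ (restricting to $e_1^\perp$, which is $B$-invariant, and intersecting the cone appropriately) and applies the inductive hypothesis with $k$ replaced by $k$ on $e_1^\perp$; or $e_1 \notin \Cc$, in which case one shows $\Cc$ and its $k$-planes avoid a neighborhood of $e_1$ and again passes to $e_1^\perp$ with $k$ unchanged. Either way the bookkeeping of how the cone and the count $k$ behave under passing to invariant hyperplanes is the delicate part; once that is set up, the conclusion $|S| \ge k$ for both $B = A^\top A$ and $B = A A^\top$ follows, and matching the index sets is routine since $A$ intertwines the two eigenbases.
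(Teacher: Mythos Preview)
The paper does not actually prove this lemma; it is stated at the end of Appendix \ref{sec:SVDapp} as a corollary of the SVD and left without argument, so there is nothing to compare your approach against.

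Your overall plan is sound. The reduction to the self-adjoint case is correct, and your worry about matching index sets is unnecessary: once you have an $A^\top A$-invariant $k$-plane $E_\infty \subset \Cc$ and choose an eigenbasis $\{e_i\}$ of $A^\top A$ so that $E_\infty = \operatorname{Span}\{e_{i_1}, \dots, e_{i_k}\}$, simply apply the hypothesis once more to get $A(E_\infty) \subset \Cc$; since $A e_{i_j} = \sigma_{i_j} e'_{i_j}$, this image is exactly $\operatorname{Span}\{e'_{i_1}, \dots, e'_{i_k}\}$, so the same indices work automatically and there is no need to run the argument separately for $A A^\top$. The real content is your step 2, and here the cleanest route is neither the contradiction argument nor the induction you sketch, but power iteration: with $B = A^\top A$ positive definite and $E_0 \subset \Cc$ any $k$-plane, the sequence $B^n(E_0) \subset \Cc$ converges in $\Gr_k(\R^d)$ to a $B$-invariant $k$-plane $E_\infty$ (decompose $\R^d$ into $B$-eigenspaces; normalized, $B^n$ collapses $E_0$ onto a direct sum of subspaces of those eigenspaces). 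If $\Cc$ is \emph{closed}, this gives $E_\infty \subset \Cc$, and being $B$-invariant, $E_\infty$ has an orthonormal basis of $B$-eigenvectors.

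Your instinct that the cone geometry is the crux is exactly right --- so right, in fact, that the lemma as literally stated (for an arbitrary cone in the sense of the footnote) is \emph{false}: take $d = 2$, $k = 1$, $A = A^\top = \operatorname{diag}(\sqrt 2, 1)$, and the non-closed cone $\Cc = \{(x,y) : 0 < |y| < |x|\} \cup \{0\}$; then both $A$ and $A^\top$ send every line of $\Cc$ into $\Cc$, yet neither singular direction $(1,0),(0,1)$ lies in $\Cc$. So any correct proof must use that $\Cc$ is closed (which holds for $\Cc^x_{1/10}$ in the paper's application), and the approaches you outline via ``large $e_j$-component forces exit from $\Cc$'' or induction on $d$ cannot close without invoking it.
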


\bibliography{biblio}
\bibliographystyle{plain}

\section*{Acknowledgment}
Alex Blumenthal is supported by the National Science Foundation under Award No. DMS-1604805.
Jinxin Xue is supported by NSFC (Significant project No.11790273) in China and Beijing Natural Science Foundation (Z180003).

\Addresses

\end{document}